\tikzstyle{node} = [circle, minimum size = 1mm, inner sep = 0mm, color=black, fill]
\tikzstyle{hyperedge} = [rectangle, minimum width = 5mm, minimum height = 5mm, draw, inner sep = 0mm, color = black]
\tikzstyle{HG} = [align = center]
\tikzset{>=stealth}
\newcommand{\downsquigarrow}{\mathbin{\rotatebox[origin=c]{-90}{$\rightsquigarrow$}}}
\newcommand{\Rev}{{\mathrm{R}}}
\newcommand{\eqdef}{\mathrel{\mathop:}=}
\mathchardef\mhyp="2D
\newcommand{\FOInt}{\mathrm{FO\mhyp Int}}
\title{Introduction to a Hypergraph Logic Unifying Different Variants of the Lambek Calculus} 
\author{Tikhon Pshenitsyn}{Lomonosov Moscow State Universuty, Moscow, Russia\and \url{https://www.researchgate.net/profile/Tikhon\_Pshenitsyn}}{ptihon@yandex.ru}{}{}
\authorrunning{T. Pshenitsyn} 
\keywords{hypergraphs, Lambek calculus, calculus, graph grammars, Datalog} 
\begin{document}

\maketitle

\begin{abstract}
	In this paper \textit{hypergraph Lambek calculus} ($\mathrm{HL}$) is presented. This formalism aims to generalize the Lambek calculus ($\mathrm{L}$) to hypergraphs as hyperedge replacement grammars extend context-free grammars. In contrast to the Lambek calculus, $\mathrm{HL}$ deals with hypergraph types and sequents; its axioms and rules naturally generalize those of $\mathrm{L}$. Consequently, certain properties (e.g. the cut elimination) can be lifted from $\mathrm{L}$ to $\mathrm{HL}$. It is shown that $\mathrm{L}$ can be naturally embedded in $\mathrm{HL}$; moreover, a number of its variants ($\mathrm{LP}$, $\mathrm{NL}$, $\mathrm{NLP}$, $\mathrm{L}$ with modalities, $\mathrm{L}^\ast(\mathbf{1})$, $\mathrm{L}^{\mathrm{R}}$) can also be embedded in $\mathrm{HL}$ via different graph constructions. We also establish a connection between $\mathrm{HL}$ and Datalog with embedded implications. It is proved that the parsing problem for $\mathrm{HL}$ is NP-complete.
\end{abstract}
\section{Introduction}\label{sec_intr}
	In the work of Joachim Lambek \cite{Lambek58}, the logical approach is introduced, which is based on algebraic structures and which is used to describe natural languages. Here we consider its standard variant called the Lambek calculus (in the Gentzen style); below it is denoted as $\mathrm{L}$. Besides its mathematical significance (it can be considered as a fragment of linear or of intuitionistic logic), the Lambek calculus forms the basis for Lambek grammars, which serve to describe formal languages. Such grammars assign logical types to symbols and accept a string if a sequent composed of these types is derivable in $\mathrm{L}$. Lambek grammars are opposed to context-free grammars, which generate strings using productions.
	
	Since 1958 until nowadays the Lambek calculus has been significantly improved, its different extensions have been presented in a number of works. Some extensions that are of interest in this paper are the following: $\mathrm{L}^\ast_{\mathbf{1}}$ (Lambek calculus with the unit); $\mathrm{L}$ with modalities, which is presented in  \cite{Moortgat96}; $\mathrm{L}$ with the reversal operation; $\mathrm{L}$ with the permutation rule $\mathrm{LP}$. One of the objectives of such extensions is to capture and naturally describe difficult linguistic phenomena (see e.g. \cite{Moortgat96}). 
	
	Many fundamental properties of the Lambek calculus and of its extensions have been discovered. Pentus proved that the derivability problem in $\mathrm{L}$ is NP-complete (see \cite{Pentus06}); L-models and R-models were introduced, completeness was established in \cite{Pentus95}. Regarding Lambek grammars, it is proved in \cite{Pentus93} that the class of languages generated by Lambek grammars equals the class of context-free languages without the empty word. 
	
	The second field of research that should be mentioned in this work is the theory of graph grammars. Generalizing context-free grammars (CFGs), graph grammars produce graph languages using rewriting rules. An overview of graph grammars is given in the handbook \cite{Rozenberg97}; a wide variety of mechanisms generating graphs is presented there. We focus on a particular approach called \emph{hyperedge replacement grammar} (HRG in short) since it is very close to context-free grammars in terms of definitions and structural properties. Hyperedge replacement grammars generate hypergraphs by means of productions: a production allows one to replace an edge of a hypergraph by another hypergraph. Hyperedge replacement grammars (HRGs) have a number of properties in common with CFGs, such as the pumping lemma, the Parikh theorem, the Greibach normal form etc. An overview of HRGs can be found in \cite{Drewes97}; in this study we stick to definitions from that book.
	
	From the practical point of view, HRGs became popular in an NLP context; namely, it turns out that HRGs can serve to describe \emph{abstract meaning representation} (AMR), which is useful for machine translation: a sentence is converted into its AMR and then --- from the AMR to a sentence of some other language. There is a number of works related to this topic, e.g. Bauer and Rambow \cite{Bauer16}, Gilroy et al. \cite{Gilroy17}, Jones et al. \cite{Jones12}, Peng et al. \cite{Peng15}. However, the area of applications is not limited by AMR. Another field where HRGs can be used is programming, e.g., program verification. See related works by Jansen et al. \cite{Jansen11}, and by Mazanek and Minas \cite{Mazanek08}.
	
	Being impressed by many similarities between HRGs and CFGs, we were curious whether it is possible to generalize type-logical grammars to hypergraphs. We started with basic categorial grammars and introduced hypergraph basic categorial grammars (see \cite{Pshenitsyn20}); we showed their duality with HRGs and also a number of similarities with basic categorial grammars. Our goal now is to construct a similar generalization of the Lambek calculus, which would deal with hypergraphs. We wish to preserve fundamental features of the Lambek calculus, such as the cut elimination, L- and R-models as well as to extend Lambek grammars in such a way that there will be duality between them and hyperedge replacement grammars.
	
	In this work we present a solution meeting the above requirements. It is called \emph{the hypergraph Lambek calculus} ($\mathrm{HL}$). In Section \ref{sec_prelim} we introduce preliminary definitions. In Section \ref{sec_HL} we present axioms and rules of the new calculus. Such features of $\mathrm{HL}$ as the cut elimination and complexity are discussed in Section \ref{sec_prop}. There we also show that the syntax of $\mathrm{HL}$ can be modelled in Datalog with embedded implications (using the language of the first-order intuitionistic logic), thus unveiling connections between them; correctness of such an embedding is proved. In Section \ref{sec_embed}, it is shown that the Lambek calculus and its variants mentioned above can be naturally embedded in HL; consequently, HL may be considered as an extensive source of ``well-behaved'' modifications of $\mathrm{L}$.
	
	The work is rather introductory; our main goal is to convince the reader that $\mathrm{HL}$ is an appropriate generalization of the Lambek calculus. Unforunately, we do not discuss here the issue of extending Lambek grammars; however, this is possible, and in that field a number of nontrivial and even unexpected results are established. They are presented in our work \cite{Pshenitsyn20Preprint}.
	
\section{Preliminaries}\label{sec_prelim}
\subsection{Lambek calculus}\label{ssec_Lambek}
In this section we provide basic definitions regarding the Lambek calculus; concepts behind them form the basis for the idea of the hypergraph Lambek calculus.

Let us fix a countable set $Pr=\{p_i\}_{i=1}^\infty$ of \emph{primitive types}. 
\begin{definition}
	The set $Tp(\mathrm{L})$ of types in the Lambek calculus is the least set such that:
	\begin{itemize}
		\item $Pr\subseteq Tp(\mathrm{L})$;
		\item If $A,B\in Tp(\mathrm{L})$ are types, then $(B\backslash A), (A/B), (A\cdot B)$ belong to $Tp(\mathrm{L})$ (brackets are often omitted).
	\end{itemize}
\end{definition}
Everywhere below the set of types of a calculus $\mathrm{C}$ is denoted as $Tp(\mathrm{C})$.
\begin{definition}
	A \emph{sequent} is of the form $T_1,\dots,T_n\to T$ where $T_i, T$ are types ($n>0$). $T_1,\dots,T_n$ is called an antecedent, and $T$ is called a succedent.
\end{definition}
When talking about the Lambek calculus, small letters $p, q, \dots$ and strings composed of them (e.g. $np, cp$) range over primitive types. Capital letters $A,B,\dots$ range over types. Capital Greek letters $\Gamma,\Delta,\dots$ range over finite (possibly empty) sequences of types. Sequents thus can be represented as $\Gamma\to A$, where $\Gamma$ is nonempty. 

The Lambek calculus $\mathrm{L}$ is a logical system with one axiom and six inference rules ($\Pi,\Psi$ being nonempty):

$$\infer[]{p\to p}{}
$$
$$
\infer[(\backslash\to)]{\Gamma, \Pi, A \backslash B, \Delta \to C}{\Pi \to A & \Gamma, B, \Delta \to C}
\qquad
\infer[(\to\backslash)]{\Pi \to A \backslash B}{A, \Pi \to B}
\qquad
\infer[(\cdot\to)]{\Gamma, A \cdot B, \Delta \to C}{\Gamma, A, B, \Delta \to C}
$$
$$
\infer[(/\to)]{\Gamma, B / A, \Pi, \Delta \to C}{\Pi \to A & \Gamma, B, \Delta \to C}
\qquad
\infer[(\to/)]{\Pi \to B / A}{\Pi, A \to B}
\qquad
\infer[(\to\cdot)]{\Pi, \Psi \to A \cdot B}{\Pi \to A & \Psi \to B}
$$
A sequent $\Gamma\to A$ is derivable ($\mathrm{L}\vdash \Gamma\to A$) if it can be obtained from axioms applying rules. A corresponding sequence of rule applications is called a derivation.

\subsection{Hypergraphs}\label{sec_hypergraph}
According to our expectations, the hypergraph Lambek calculus must deal with sequents, axioms and rules directly generalizing those of the Lambek calculus; besides, it has to be strongly connected to hyperedge replacement grammars (HRGs) in the sense of underlying mechanisms and properties. To satisfy the latter, below we introduce definitions related to hypergraphs according to the well-known handbook chapter \cite{Drewes97} on HRGs.

$\mathbb{N}$ includes $0$. $\Sigma^*$ is the set of all strings over the alphabet $\Sigma$ including the empty string $\Lambda$. $\Sigma^\circledast$ is the set of all strings consisting of distinct symbols. The length $|w|$ of the word $w$ is the number of symbols in $w$. The set of all symbols contained in a word $w$ is denoted by $[w]$. If $f:\Sigma\to\Delta$ is a function from one set to another, then it is naturally extended to a function $f:\Sigma^*\to\Delta^*$ ($f(\sigma_1\dots\sigma_k)=f(\sigma_1)\dots f(\sigma_k)$).

Let $C$ be some fixed set of labels for whom the function $type: C\to \mathbb{N}$ is considered. 
\begin{definition}\label{def_hypergraph}
	\emph{A hypergraph $G$ over $C$} is a tuple $G=\langle V, E, att, lab, ext \rangle$ where $V$ is a set of \emph{nodes}, $E$ is a set of \emph{hyperedges}, $att: E\to V^\circledast$ assigns a string (i.e. an ordered set) of \emph{attachment} nodes to each edge, $lab: E \to C$ labels each edge by some element of $C$ in such a way that $type(lab(e))=|att(e)|$ whenever $e\in E$, and $ext\in V^\circledast$ is a string of \emph{external} nodes. 
	
	Components of a hypergraph $G$ are denoted by $V_G, E_G, att_G, lab_G, ext_G$ resp.
\end{definition}
In the remainder of the paper, hypergraphs are simply called graphs, and hyperedges are called edges. Usual graphs (with type 2 hyperedges) are called 2-graphs. The set of all graphs with labels from $C$ is denoted by $\mathcal{H}(C)$. Graphs are usually named by letters $G$ and $H$.

In drawings of graphs, black dots correspond to nodes, labeled squares correspond to edges, $att$ is represented by numbered lines, and external nodes are depicted by numbers in brackets. If an edge has exactly two attachment nodes, it can be denoted by an arrow (which goes from the first attachment node to the second one). 
\begin{definition}\label{type}
	$type_G$ (or $type$, if $G$ is clear) returns the number of nodes attached to an edge in a graph $G$: $type_G(e)\eqdef|att_G(e)|$.
	If $G$ is a graph, then $type(G)\eqdef |ext_G|$.
\end{definition}
\begin{example}
	The following picture represents a graph $G$:
	\begin{center}
		{\tikz[baseline=.1ex]{
				\node[] (R) {};
				\node[node,above right=2mm and 0mm of R,label=above:{\scriptsize $(1)$}] (N1) {};
				\node[node,below=5.5mm of N1] (N2) {};
				\node[hyperedge,right=5.5mm of N1] (E) {$s$};
				\node[node,below=3.7mm of E] (N3) {};
				\node[node,right=5.5mm of E,label=above:{\scriptsize $(3)$}] (N4) {};
				\node[node,below=5.5mm of N4,label=below:{\scriptsize $(2)$}] (N5) {};
				\node[hyperedge,right=5.5mm of N4] (E2) {$q$};

				\draw[>=stealth,->,black] (N1) -- node[left] {\scriptsize $p$} (N2);
				\draw[>=stealth,->,black] (N2) -- node[below] {\scriptsize $p$} (N3);
				\draw[-,black] (N3) -- node[right] {\scriptsize 1} (E);
				\draw[-,black] (N1) -- node[above] {\scriptsize 2} (E);
				\draw[-,black] (N4) -- node[above] {\scriptsize 3} (E);
		}}
	\end{center}
	Here $type(p)=2, type(q)=0, type(s)=3$; $type(G)=3$.
\end{example}
\begin{definition}
	A sub-hypergraph (or just subgraph) $H$ of a graph $G$ is a hypergraph such that $V_H\subseteq V_G$, $E_H\subseteq E_G$, and for all $e\in E_H$ $att_H(e)=att_G(e)$, $lab_H(e)=lab_G(e)$.
\end{definition}
\begin{definition}
	If $H=\langle \{v_i\}_{i=1}^n,\{e_0\},att,lab,v_1\dots v_n\rangle$, $att(e_0)=v_1\dots v_n$ and $lab(e_0)=a$, then $H$ is called \emph{a handle}. In this work we denote it by $a^\bullet$.
\end{definition}
\begin{definition}
	\emph{An isomorphism} between graphs $G$ and $H$ is a pair of bijective functions $\mathcal{E}: E_G\to E_H$, $\mathcal{V}: V_G\to V_H$ such that $att_H\circ\mathcal{E}=\mathcal{V}\circ att_G$, $lab_G=lab_H\circ\mathcal{E}$, $\mathcal{V}(ext_G)=ext_H$. 
\end{definition}
In this work, we do not distinguish between isomorphic graphs.

Strings can be considered as graphs with the string structure. This is formalized in
\begin{definition}\label{def_str_gr}
	A string graph induced by a string $w=a_1\dots a_n$ is a graph of the form $\langle \{v_i\}_{i=0}^n,\{e_i\}_{i=1}^n,att,lab,v_0v_n \rangle$ where $att(e_i)=v_{i-1}v_i$, $lab(e_i)=a_i$. It is denoted by $w^\bullet$.
\end{definition}
We additionally introduce the following definition (not from \cite{Drewes97}):
\begin{definition}
	Let $H\in \mathcal{H}(C)$ be a graph, and let $f:E_H\to C$ be a function. Then $f(H)=\langle V_H, E_H, att_H, lab_{f(H)}, ext_H\rangle$ where $lab_{f(H)}(e)=f(e)$ for all $e$ in $E_H$. It is required that $type(lab_H(e))=type(f(e))$ for $e\in E_H$.
\end{definition}
If one wants to relabel only one edge $e_0$ within $H$ with a label $a$, then the result is denoted by $H[e_0\eqdef a]$.

\subsection{Hyperedge replacement}\label{ssec_repl}
This procedure is defined in \cite{Drewes97} and it plays a fundamental role in hyperedge replacement grammars. The replacement of an edge $e_0$ in $G$ with a graph $H$ can be done if $type(e_0)=type(H)$ as follows:
\begin{enumerate}
	\item Remove $e_0$;
	\item Insert an isomorphic copy of $H$ (namely, $H$ and $G$ have to consist of disjoint sets of nodes and edges);
	\item For each $i$, fuse the $i$-th external node of $H$ with the $i$-th attachement node of $e_0$.
\end{enumerate}
The result is denoted by $G[e_0/H]$. It is known that if several edges of a graph are replaced by other graphs, then the result does not depend on the order of replacements; moreover the result is not changed if replacements are done simultaneously. In \cite{Drewes97} this is called sequentialization and parallelization properties. The following notation is in use: if $e_1,\dots,e_k$ are distinct edges of a graph $H$ and they are simultaneously replaced by graphs $H_1,\dots,H_k$ resp. (this means that $type(H_i)=type(e_i)$), then the result is denoted $H[e_1/H_1,\dots,e_k/H_k]$.

\section{Hypergraph Lambek Calculus: Definitions}\label{sec_HL}
In this section we introduce the hypergraph Lambek calculus: we define types, sequents, axioms and rules of this formalism. It is expected that the resulting logic will be \emph{literally} a logic on graphs: while such calculi as the Lambek calculus, the first-order predicate calculus, the propositional calculus deal with string sequents or formulas, we desire a new formalism to work with objects of graph nature. Thus sequents are supposed to be composed of graphs rather than of strings, and types are assumed to label edges of graphs. Definitions presented below meet these requirements.
\subsection{Types and sequents}
We fix a countable set $Pr$ of primitive types and a function $type:Pr\to\mathbb{N}$ such that for each $n\in\mathbb{N}$ there are infinitely many $p\in Pr$ for which $type(p)=n$. Types are constructed from primitive types using division and multiplication operations. Simultaneously, the function $type$ is defined on types (apologies for the tautology): it is obligatory since we are going to label edges by types. 

Let us fix some symbol $\$$ that is not included in any of the sets considered. {\bf NB!} This symbol is allowed to label edges with different number of attachment nodes. To be consistent with Definition \ref{def_hypergraph} one can assume that there are symbols $\$_n,n\ge 0$ instead such that $type(\$_n)=n$.
\begin{definition}
	The set $Tp(\mathrm{HL})$ of types is defined inductively as the least set satisfying the following conditions:
	\begin{enumerate}
		\item $Pr\subseteq Tp(\mathrm{HL})$.
		\item Let $N$ (``numerator'') be in $Tp(\mathrm{HL})$. Let $D$ (``denominator'') be a graph such that exactly one of its edges (call it $e_0$) is labeled by $\$$, and the other edges (possibly, there are none of them) are labeled by elements of $Tp(\mathrm{HL})$; let also $type(N)=type(D)$. Then $T=(N\div D)$ also belongs to $Tp(\mathrm{HL})$, and $type(T)\eqdef type_D(e_0)$.
		\item Let $M$ be a graph such that all its edges are labeled by types from $Tp(\mathrm{HL})$ (possibly, there are no edges at all). Then $T=\times(M)$ belongs to $Tp(\mathrm{HL})$, and $type(T)\eqdef type(M)$.
	\end{enumerate}
\end{definition}
In types with division, $D$ is usually drawn as a graph in brackets, so instead of $(N\div D)$ (a formal notation) a graphical notation $N\div (D)$ is in use. Sometimes brackets are omitted.
\begin{example}\label{ex_types}
	The following structures are types:
	\begin{itemize}
		\item $A_1=q\div\left(\mbox{
			{\tikz[baseline=1ex]{
					\node[] (V) {};
					\node[hyperedge,above=-2mm of V] (E1) {$s$};
					\node[node,right=5mm of E1] (N1) {};
					\node[node,right=10mm of N1] (N2) {};
					\node[hyperedge,right=5mm of N2] (E3) {$r$};
					\draw[-,black] (E1) -- node[above] {\scriptsize 1} (N1);
					\draw[->,black] (N1) -- node[above] {$\$$} (N2);
					\draw[-,black] (N2) -- node[above] {\scriptsize 1} (E3);
			}}
		}\right)$;
		\item $A_2=t\div\left(\mbox{
			{\tikz[baseline=1ex]{
					\node (V) {};
					\node[node,above=0.3mm of V, label=left:{\scriptsize $(1)$}] (N1) {};
					\node[hyperedge, right=5mm of N1] (E1) {$r$};
					\node[node, right=8mm of E1,label=left:{\scriptsize $(2)$}] (N2) {};
					\node[node,right=10mm of N2] (N3) {};
					\node[hyperedge,right=5mm of N3] (E3) {$s$};
					\draw[-,black] (N1) -- node[above] {\scriptsize 1} (E1);
					\draw[->,black] (N3) -- node[above] {$\$$} (N2);
					\draw[-,black] (N3) -- node[above] {\scriptsize 1} (E3);
			}}
		}\right)$;
		\item $A_3=q\div\left(\mbox{
			{\tikz[baseline=1ex]{
					\node[] (V) {};
					\node[node,above=0mm of V] (N1) {};
					\node[hyperedge, right=5mm of N1] (E1) {$\$$};
					\node[node, above right= 2mm and 9mm of E1] (N2) {};
					\node[node,below right=2mm and 9mm of E1] (N3) {};
					\draw[-,black] (N1) -- node[above] {\scriptsize 3} (E1);
					\draw[-,black] (E1) -- node[above] {\scriptsize 1} (N2);
					\draw[-,black] (E1) -- node[below] {\scriptsize 2} (N3);
					\draw[->,black] (N3) -- node[right] {$t$} (N2);
			}}
		}\right)$;
		\item $A_4=\times\left(\mbox{
			{\tikz[baseline=.1ex]{
					\node[node,label=left:{\scriptsize $(1)$}] (N1) {};
					\node[node,right=10mm of N1,label=right:{\scriptsize $(2)$}] (N2) {};
					\draw[->,black] (N1) to[bend left=50] node[above] {$p$} (N2);
					\draw[->,black] (N1) to[bend right=50] node[below] {$A_1$} (N2);
			}}
		}\right)$.
	\end{itemize}
	Here $type(p)=2, type(q)=0, type(r)=type(s)=1, type(t)=2$; $type(A_1)=type(A_2)=2,type(A_3)=3,type(A_4)=2$.
\end{example}
\begin{definition}
	\emph{A graph sequent} is a structure of the form $H\to A$, where $A\in Tp(\mathrm{HL})$ is a type, $H\in\mathcal{H}(Tp(\mathrm{HL}))$ is a graph labeled by types and $type(H)=type(A)$. $H$ is called the antecedent of the sequent, and $A$ is called the succedent of the sequent.
\end{definition}
Let $\mathcal{T}$ be a subset of $Tp(\mathrm{HL})$. We say that $H\to A$ is over $\mathcal{T}$ if $G\in\mathcal{H}(\mathcal{T})$ and $A\in\mathcal{T}$.
\begin{example}\label{ex_sequent}
	The following structure is a graph sequent:
	\\
	$$\mbox{	
		{\tikz[baseline=.1ex]{
				\node[node,label=left:{\scriptsize $(1)$}] (N1) {};
				\node[node,above right=3mm and 8mm of N1,label=below:{\scriptsize $(2)$}] (N2) {};
				\node[node,below right=3mm and 8mm of N1] (N3) {};
				\node[hyperedge,right=15mm of N1] (E) {$A_3$};
				\node[node,right=5mm of E] (N4) {};
				\draw[->,black] (N1) -- node[above] {$p$} (N2);
				\draw[->,black] (N1) -- node[below] {$A_2$} (N3);
				\draw[-,black] (N3) -- node[below] {\scriptsize 1} (E);
				\draw[-,black] (N2) -- node[above] {\scriptsize 2} (E);
				\draw[-,black] (E) -- node[above] {\scriptsize 3} (N4);
	}}}\quad\to\quad A_4$$
	\\
	Here $A_i, i=2,3,4$ are from Example \ref{ex_types}.
\end{example}
\begin{remark}
	Below we provide a general intuition that should explain main principles of $\div$ and $\times$:
	\begin{itemize}
		\item Look at the context-free string production $S\to NP \mbox{ \textit{sleeps}}$. It says that a structure $S$ (sentence) may be obtained by composing a structure $NP$ (noun phrase, e.g. \textit{Tim}) with the unit \textit{sleeps} in such order. It can be rewritten as follows: $\mbox{\textit{sleeps}}\triangleright S\div(NP\;\$)$. This means that \textit{sleeps} is such a unit that if one places it instead of \$ and a structure of the type $NP$ instead of $NP$ within $(NP\;\$)$, then he obtains a structure of the type $S$. In general, a type $N\div D$ represents such units that if one places a unit on the \$-place in $D$ and fills other places of $D$ with units of corresponding types, then he obtains a structure $N$.
		\item In $\mathrm{L}$, the product like $NP\cdot VP$ represents a resource that stores both types $NP$ and $VP$. Linguistically, if $VP$ denotes verb phrases, then $NP\cdot VP$ stores, e.g., \textit{Tim sleeps}, \textit{Peter loves Helen} etc. Generally, $\times(M)$ is a type that ``freezes'' structures of different types connected to each other w.r.t. $M$ in a single new structure.
		\item A sequent $H\to A$ is understood as the following statement: ``each unit of the type $\times(H)$ is also of the type $A$''. 
	\end{itemize}
\end{remark}

\subsection{Axiom and rules}\label{sec_axioms_rules}
The hypergraph Lambek calculus (denoted $\mathrm{HL}$) we introduce here is a logical system that defines what graph sequents are derivable (=provable). $\mathrm{HL}$ includes one axiom and four rules, which are introduced below. Each rule is illustrated by examples exploiting string graphs.

The only \textbf{axiom} is the following: $p^\bullet\to p,\quad p\in Pr$.
\subsubsection{Rule $(\div\to)$.}\label{subsec_div_to}
Let $N\div D$ be a type and let $E_D=\{d_0,d_1,\dots,d_k\}$ where $lab(d_0)=\$$. Let $H\to A$ be a graph sequent and let $e\in E_H$ be labeled by $N$. Let finally $H_1,\dots,H_k$ be graphs labeled by types. Then the rule $(\div\to)$ is the following:
$$
\infer[(\div\to)]{H[e/D][d_0\eqdef N\div D][d_1/H_1,\dots,d_k/H_k]\to A}{H\to A & H_1\to lab(d_1) &\dots & H_k\to lab(d_k)}
$$
This rule explains how a type with division appears in an antecedent: we replace an edge $e$ by $D$, put a label $N\div D$ instead of \$ and replace the remaining labels of $D$ by corresponding antecedents. 
\begin{example}\label{ex_div_to}
	Consider the following rule application with $T_i$ being some types and with $T$ being equal to $q\div(T_2\$T_3)^\bullet$ (recall that $w^\bullet$ here denotes a string graph induced by $w$):
	$$
	\infer[(\div\to)]{
		(prs\,T\,tu)^\bullet\to T_1
	}{
		(pq)^\bullet\to T_1 & (rs)^\bullet\to T_2 & (tu)^\bullet\to T_3 
	}
	$$
\end{example}
\subsubsection{Rule $(\to\div)$.}
Let $F\to N\div D$ be a graph sequent; let $e_0\in E_D$ be labeled by \$. Then
	$$
	\infer[(\to\div)]{F\to N\div D}{D[e_0/F]\to N}
	$$
Formally speaking, this rule is improper since it is formulated from bottom to top. It is understood, however, as follows: if there are such graphs $D,F$ and such a type $N$ that in a sequent $H\to N$ the graph $H$ equals $D[F/e_0]$ and $H\to N$ is derivable, then $F\to N\div D$ is also derivable.
\begin{example}\label{ex_to_div}
	Consider the following rule application where $T$ equals $\times((pqr)^\bullet)$ (here we draw string graphs instead of writing $w^\bullet$ to visualize the rule application):
	$$
	\infer[(\to\div)]{
		\mbox{	
			{\tikz[baseline=.1ex]{
					\node[node,label=left:{\scriptsize $(1)$}] (N1) {};
					\node[node,right=8mm of N1] (N2) {};
					\node[node,right=8mm of N2,label=right:{\scriptsize $(2)$}] (N3) {};
					\draw[->,black] (N1) -- node[above] {$p$} (N2);
					\draw[->,black] (N2) -- node[above] {$q$} (N3);
		}}}\to T\div\left(\mbox{	
		{\tikz[baseline=.1ex]{
		\node[node,label=left:{\scriptsize $(1)$}] (N1) {};
		\node[node,right=8mm of N1] (N2) {};
		\node[node,right=8mm of N2,label=right:{\scriptsize $(2)$}] (N3) {};
		\draw[->,black] (N1) -- node[above] {$\$$} (N2);
		\draw[->,black] (N2) -- node[above] {$r$} (N3);
	}}}\right)
	}{
		\mbox{	
			{\tikz[baseline=.1ex]{
					\node[node,label=left:{\scriptsize $(1)$}] (N1) {};
					\node[node,right=8mm of N1] (N2) {};
					\node[node,right=8mm of N2] (N3) {};
					\node[node,right=8mm of N3,label=right:{\scriptsize $(2)$}] (N4) {};
					\draw[->,black] (N1) -- node[above] {$p$} (N2);
					\draw[->,black] (N2) -- node[above] {$q$} (N3);
					\draw[->,black] (N3) -- node[above] {$r$} (N4);
		}}}\to T}
	$$
\end{example}
\subsubsection{Rule $(\times\to)$.}
Let $G\to A$ be a graph sequent and let $e\in E_G$ be labeled by $\times(F)$. Then
$$
\infer[(\times\to)]{G\to A}{G[e/F]\to A}
$$
This rule again is formulated from bottom to top. Intuitively speaking, there is a subgraph of an antecedent in a premise, and it is ``compressed'' into a single $\times(F)$-labeled edge.
\begin{example}\label{ex_times_to}
	Consider the following rule application where $U$ equals $\times((pqrs)^\bullet)$:
	$$
	\infer[(\times\to)]{
		\mbox{	
			{\tikz[baseline=.1ex]{
					\node[node,label=left:{\scriptsize $(1)$}] (N1) {};
					\node[node,right=8mm of N1] (N2) {};
					\node[node,right=17.3mm of N2] (N4) {};
					\node[node,right=8mm of N4,label=right:{\scriptsize $(2)$}] (N5) {};
					\draw[->,black] (N1) -- node[above] {$p$} (N2);
					\draw[->,black] (N2) -- node[above] {$\times((qr)^\bullet)$} (N4);
					\draw[->,black] (N4) -- node[above] {$s$} (N5);	
		}}}\to U
	}{
		\mbox{	
			{\tikz[baseline=.1ex]{
					\node[node,label=left:{\scriptsize $(1)$}] (N1) {};
					\node[node,right=8mm of N1] (N2) {};
					\node[node,right=8mm of N2] (N3) {};
					\node[node,right=8mm of N3] (N4) {};
					\node[node,right=8mm of N4,label=right:{\scriptsize $(2)$}] (N5) {};
					\draw[->,black] (N1) -- node[above] {$p$} (N2);
					\draw[->,black] (N2) -- node[above] {$q$} (N3);
					\draw[->,black] (N3) -- node[above] {$r$} (N4);
					\draw[->,black] (N4) -- node[above] {$s$} (N5);	
		}}}\to U}
	$$
\end{example}
\subsubsection{Rule $(\to\times)$.}
Let $\times(M)$ be a type and let $E_M=\{m_1,\dots,m_l\}$. Let $H_1,\dots,H_l$ be graphs. Then
$$
\infer[(\to\times)]{M[m_1/H_1,\dots,m_l/H_l]\to\times(M)}{H_1\to lab(m_1) & \dots & H_l\to lab(m_l)}
$$
This rule is quite intuitive: several sequents can be combined into a single one via some graph structure $M$.
\begin{example}\label{ex_to_times}
	Consider the following rule application with $T_i$ being some types:
	$$
	\infer[(\to\times)]{
		(pqrstu)^\bullet\to \times((T_1T_2T_3)^\bullet)
	}{
		(pq)^\bullet\to T_1 & (rs)^\bullet\to T_2 & (tu)^\bullet\to T_3 
	}
	$$
\end{example}
\begin{remark}
	If a graph $M$ in the rule $(\to\times)$ does not have edges, then there are zero premises in this rule ($l=0$); hence $M\to\times(M)$ is derivable, and this sequent can be considered as an axiom.
\end{remark}
\begin{definition}
	A graph sequent $H\to A$ is \emph{derivable in $\mathrm{HL}$} ($\mathrm{HL}\vdash H\to A$) if it can be obtained from axioms using rules of $\mathrm{HL}$. A corresponding sequence of rule applications is called \emph{a derivation} and its representation as a tree is called \emph{a derivation tree}.
\end{definition}
\begin{example}\label{ex_derivation}
	The sequent from Example \ref{ex_sequent} is derivable in $\mathrm{HL}$. Here is its derivation:
	$$
	\infer[(\to\times)]
	{
		\mbox{	
			{\tikz[baseline=.1ex]{
					\node[node,label=left:{\scriptsize $(1)$}] (N1) {};
					\node[node,above right=3mm and 8mm of N1,label=below:{\scriptsize $(2)$}] (N2) {};
					\node[node,below right=3mm and 8mm of N1] (N3) {};
					\node[hyperedge,right=15mm of N1] (E) {$A_3$};
					\node[node,right=5mm of E] (N4) {};
					\draw[->,black] (N1) -- node[above] {$p$} (N2);
					\draw[->,black] (N1) -- node[below] {$A_2$} (N3);
					\draw[-,black] (N3) -- node[below] {\scriptsize 1} (E);
					\draw[-,black] (N2) -- node[above] {\scriptsize 2} (E);
					\draw[-,black] (E) -- node[above] {\scriptsize 3} (N4);
		}}}\quad\to\quad \times\left(\mbox{
	{\tikz[baseline=.1ex]{
	\node (V) {};
	\node[node,above=-1.05mm of V,label=left:{\scriptsize $(1)$}] (N1) {};
	\node[node,right=10mm of N1,label=right:{\scriptsize $(2)$}] (N2) {};
	\draw[->,black] (N1) to[bend left=50] node[above] {$p$} (N2);
	\draw[->,black] (N1) to[bend right=50] node[below] {$A_1$} (N2);
}}
}\right)
	}{
		\infer[(\to\div)]
		{
			\mbox{	
				{\tikz[baseline=.1ex]{
						\node (V) {};
						\node[node,above=3.2mm of V,label=right:{\scriptsize $(1)$}] (N1) {};
						\node[node,below =7mm of N1] (N2) {};
						\node[hyperedge,right=5mm of N2] (E3) {$A_3$};
						\node[node,above=5mm of E3,label=right:{\scriptsize $(2)$}] (N3) {};
						\node[node,right=5mm of E3] (N4) {};
						\draw[->,black] (N1) -- node[left] {$A_2$} (N2);
						\draw[-,black] (N2) -- node[below] {\scriptsize 1} (E3);
						\draw[-,black] (E3) -- node[left] {\scriptsize 2} (N3);
						\draw[-,black] (E3) -- node[below] {\scriptsize 3} (N4);
			}}}\quad\to\; q\div\left(\mbox{
		{\tikz[baseline=1ex]{
		\node[] (V) {};
		\node[hyperedge,above=-2mm of V] (E1) {$s$};
		\node[node,right=5mm of E1] (N1) {};
		\node[node,right=10mm of N1] (N2) {};
		\node[hyperedge,right=5mm of N2] (E3) {$r$};
		\draw[-,black] (E1) -- node[above] {\scriptsize 1} (N1);
		\draw[->,black] (N1) -- node[above] {$\$$} (N2);
		\draw[-,black] (N2) -- node[above] {\scriptsize 1} (E3);
	}}
}\right)
		}
		{
			\infer[(\div\to)]{
				\mbox{	
					{\tikz[baseline=.1ex]{
							\node[node] (N1) {};
							\node[hyperedge,left=4mm of N1] (E1) {$s$};
							\node[node,right=9mm of N1] (N2) {};
							\node[hyperedge,right=4mm of N2] (E2) {$A_3$};
							\node[node,above=4mm of E2] (N3) {};
							\node[node,right=4mm of E2] (N4) {};
							\node[hyperedge,right=4mm of N4] (E3) {$r$};
							\draw[-,black] (E1) -- node[above] {\scriptsize 1} (N1);
							\draw[->,black] (N1) -- node[above] {$A_2$} (N2);
							\draw[-,black] (N2) -- node[above] {\scriptsize 1} (E2);
							\draw[-,black] (E2) -- node[left] {\scriptsize 3} (N3);
							\draw[-,black] (E2) -- node[above]{\scriptsize 2} (N4);
							\draw[-,black] (N4) -- node[above] {\scriptsize 1} (E3);
				}}}\quad\to\; q
			}
			{
				\infer[(\div\to)]
				{
					\mbox{
						{\tikz[baseline=1ex]{
								\node[] (V) {};
								\node[node,above=0mm of V] (N1) {};
								\node[hyperedge, right=5mm of N1] (E1) {$A_3$};
								\node[node, above right= 1mm and 9mm of E1] (N2) {};
								\node[node,below right=1mm and 9mm of E1] (N3) {};
								\draw[-,black] (N1) -- node[above] {\scriptsize 3} (E1);
								\draw[-,black] (E1) -- node[above] {\scriptsize 1} (N2);
								\draw[-,black] (E1) -- node[below] {\scriptsize 2} (N3);
								\draw[->,black] (N3) -- node[right] {$t$} (N2);
						}}
					}\;\to\; q
				}
				{
					\infer{q^\bullet\to q}{}
					&
					\infer{t^\bullet\to t}{}
				}
				&
				\infer{r^\bullet\to r}{}
				&
				\infer{s^\bullet\to s}{}
			}
		}
		&\infer{p^\bullet\to p}{}
	}
	$$
\end{example}
The definition of the hypergraph Lambek calculus is complete. Observe that Examples \ref{ex_to_div} and \ref{ex_times_to} are related to derivations in the string Lambek calculus; this hepls one to unerstand how rules of $\mathrm{HL}$ are designed in comparison with those of $\mathrm{L}$.
\begin{remark}\label{rem_loops}
	We forbid cases where some external nodes of a graph or some attachment nodes of an edge coincide; that is, we forbid loops (in a general sense). This is done following \cite{Drewes97} (to obtain more similarities). Nevertheless our definitions can be easily extended to the cases where loops are allowed. In order to do this it suffices to replace $V^\circledast$ by $V^\ast$ in Definition \ref{def_hypergraph}. However, this leads us to questionable consequences. Consider, for instance the following two derivations with a type $L_0=\times\bigg(
	\mbox{{\tikz[baseline=.1ex]{
				\node[node] (N) {};
				\draw[>=stealth,->,black] (N) to [out=-30,in=30,looseness=30] node[right] {$p$} (N);
				\node[above=0mm of N] {\scriptsize $(1)$};
				\node[below=0mm of N] {\scriptsize $(2)$};
	}}}\bigg)$ and some type $T$:
	$$
	\infer[(\times\to)]{
		\mbox{{\tikz[baseline=.1ex]{
					\node[node] (N1) {};
					\node[node,right=5mm of N1] (N) {};
					\draw[>=stealth,->,black] (N) to [out=-30,in=30,looseness=30] node[right] {$L_0$} (N);
					\draw[>=stealth,->,black] (N1) -- node[above] {$q$} (N);
					\node[right=10mm of N] {$\to T$};
		}}}
	}{
		\mbox{{\tikz[baseline=.1ex]{
					\node[node] (N1) {};
					\node[node,right=5mm of N1] (N) {};
					\draw[>=stealth,->,black] (N) to [out=-30,in=30,looseness=30] node[right] {$p$} (N);
					\draw[>=stealth,->,black] (N1) -- node[above] {$q$} (N);
					\node[right=10mm of N] {$\to T$};
		}}}
	}
	\qquad
	\infer[(\times\to)]{
		\mbox{{\tikz[baseline=.1ex]{
					\node[node] (N1) {};
					\node[node,right=5mm of N1] (N) {};
					\node[node,right=5mm of N] (N3) {};
					\draw[>=stealth,->,black] (N) -- node[above] {$L_0$} (N3);
					\draw[>=stealth,->,black] (N1) -- node[above] {$q$} (N);
					\node[right=10mm of N] {$\to T$};
		}}}
	}{
		\mbox{{\tikz[baseline=.1ex]{
					\node[node] (N1) {};
					\node[node,right=5mm of N1] (N) {};
					\draw[>=stealth,->,black] (N) to [out=-30,in=30,looseness=30] node[right] {$p$} (N);
					\draw[>=stealth,->,black] (N1) -- node[above] {$q$} (N);
					\node[right=10mm of N] {$\to T$};
		}}}
	}
	$$
	In the case when we forbid coincidences within external or attachment nodes, the application of $(\times\to)$ is completely defined by the antecedent of a premise $H= G[F/e]$ and by the active part $F$. Moreover, one can reformulate rules $(\times\to)$ and $(\to\div)$ in a top-to-bottom way (in order to do this it suffices to define a procedure opposite to replacement). Here, however, the same sequent in the premise and its part participating in the rule can yield different conclusions; therefore, the rule $(\times\to)$ cannot be reformulated from top to bottom (it would require to introduce several options for the sequent in the conclusion). This is one of the key reasons why we decided to reject the idea of allowing coincidences of external or attachment nodes. In the remainder of the work we stick to definitions given in Sections \ref{sec_hypergraph} and \ref{sec_axioms_rules}, and return to the issue of this remark only once in Section \ref{sec_embed_l1}. However, we do not underestimate positive outcomes of considering loops since they can be useful in modelling different variants of the Lambek calculus.
\end{remark}
\section{Embedding of the Lambek calculus and of its variants in $\mathrm{HL}$}\label{sec_embed}
As expected, $\mathrm{HL}$ naturally generalizes $\mathrm{L}$. In this section we show how to embed the Lambek calculus into the hypergraph Lambek calculus considering strings as string graphs. Besides, surprisingly $\mathrm{HL}$ can model several extensions of $\mathrm{L}$, which is discussed below.
\subsection{Embedding of $\mathrm{L}$}\label{sec_embed_lambek}
Types of the Lambek calculus are embedded in $\mathrm{HL}$ by means of a function $tr:Tp(\mathrm{L})\to Tp(\mathrm{HL})$ presented below:
\begin{multicols}{2}
\begin{itemize}
	\item $tr(p)\eqdef p, \quad p\in Pr, type(p)=2$;
	\item $tr(A/B)\eqdef tr(A)\div(\$\,tr(B))^\bullet$;
	\item $tr(B\backslash A)\eqdef tr(A)\div(tr(B)\,\$)^\bullet$;
	\item $tr(A\cdot B)\eqdef \times((tr(A)tr(B))^\bullet)$.
\end{itemize}
\end{multicols}
\begin{example}
	The type $r\backslash (p\cdot q)$ is translated into the type
	$$
	\times\left(\mbox{	
		{\tikz[baseline=.1ex]{
				\node[node,label=left:{\scriptsize $(1)$}] (N1) {};
				\node[node,right=8mm of N1] (N2) {};
				\node[node,right=8mm of N2,label=right:{\scriptsize $(2)$}] (N3) {};
				\draw[->,black] (N1) -- node[above] {$p$} (N2);
				\draw[->,black] (N2) -- node[above] {$q$} (N3);
	}}}\right)\div\left(\mbox{	
		{\tikz[baseline=.1ex]{
				\node[node,label=left:{\scriptsize $(1)$}] (N1) {};
				\node[node,right=8mm of N1] (N2) {};
				\node[node,right=8mm of N2,label=right:{\scriptsize $(2)$}] (N3) {};
				\draw[->,black] (N1) -- node[above] {$r$} (N2);
				\draw[->,black] (N2) -- node[above] {$\$$} (N3);
	}}}\right)
	$$
\end{example}
String sequents $\Gamma\to A$ are transformed into graph sequents as follows: $tr(\Gamma\to A)\eqdef tr(\Gamma)^\bullet\to tr(A)$. Let $tr(Tp(\mathrm{L}))$ be the image of $tr$.

\begin{theorem}\label{embed_lambek}\leavevmode
	\begin{enumerate}
		\item If $\mathrm{L}\vdash \Gamma\to C$, then $\mathrm{HL}\vdash tr(\Gamma\to C)$.
		\item If $\mathrm{HL}\vdash G\to T$ is a derivable graph sequent over $tr(Tp(\mathrm{L}))$, then for some $\Gamma$ and $C$ we have $G\to T=tr(\Gamma\to C)$ (in particular, $G$ has to be a string graph) and $\mathrm{L}\vdash \Gamma\to C$.
	\end{enumerate}
\end{theorem}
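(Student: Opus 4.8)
The plan is to prove the two directions separately, in each case by induction on the structure of a derivation. For part~(1) I would proceed by induction on a derivation of $\Gamma\to C$ in $\mathrm{L}$, showing that each axiom and each of the six rules of $\mathrm{L}$ is simulated by a (short) derivation in $\mathrm{HL}$ after applying $tr$. The axiom $p\to p$ maps to $p^\bullet\to p$, which is exactly the axiom of $\mathrm{HL}$ (since $type(p)=2$ for primitive types in the image of $tr$). For the rules, the correspondence is essentially forced by the design of $tr$: $(\to\backslash)$ and $(\to/)$ are simulated by $(\to\div)$, since $tr(B\backslash A)=tr(A)\div(tr(B)\,\$)^\bullet$ and replacing $\$$ in $(tr(B)\,\$)^\bullet$ by the string graph $tr(\Pi)^\bullet$ yields the string graph $tr(B,\Pi)^\bullet$; similarly $(\backslash\to)$ and $(/\to)$ are simulated by $(\div\to)$; $(\to\cdot)$ is simulated by $(\to\times)$, using that concatenation of strings corresponds to the appropriate $M[m_1/H_1,m_2/H_2]$ with $M=(tr(A)tr(B))^\bullet$; and $(\cdot\to)$ is simulated by $(\times\to)$. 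In each case one checks that the graph-theoretic operation (hyperedge replacement into a string graph) produces precisely the string graph corresponding to the concatenation in the $\mathrm{L}$-rule, which is a routine but necessary verification.

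For part~(2) the statement is a conservativity claim, and here the main work lies. I would argue by induction on a derivation in $\mathrm{HL}$ of a sequent $G\to T$ that is \emph{over} $tr(Tp(\mathrm{L}))$, and prove simultaneously that (a) $G$ is necessarily a string graph, (b) $T$ lies in $tr(Tp(\mathrm{L}))$ (which holds by hypothesis, but one must check it is preserved up the derivation tree — actually the premises need not a priori be over $tr(Tp(\mathrm{L}))$, so this must be shown), and (c) the corresponding $\mathrm{L}$-sequent is derivable. The crucial structural observation is that every type in $tr(Tp(\mathrm{L}))$ has $type=2$, that every type with $\div$ in the image has a denominator $D$ which is a two-edge string graph of the form $(\$\,tr(B))^\bullet$ or $(tr(B)\,\$)^\bullet$, and that every $\times$-type in the image has $M=(tr(A)tr(B))^\bullet$, again a two-edge string graph. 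One then examines which rule can be the last rule applied to derive a string-graph sequent whose succedent is in $tr(Tp(\mathrm{L}))$, and shows that in each case the premises are again string-graph sequents over $tr(Tp(\mathrm{L}))$ and that the rule corresponds to exactly one $\mathrm{L}$-rule. For instance, $(\to\div)$ applied with conclusion $F\to tr(A)\div(tr(B)\,\$)^\bullet$ has premise $(tr(B)\,\$)^\bullet[e_0/F]\to tr(A)$; for this to be a string graph $F$ must itself be a string graph, and then the premise is $tr(B,\Gamma)^\bullet\to tr(A)$ where $F=tr(\Gamma)^\bullet$, matching $(\to\backslash)$.

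The main obstacle I expect is establishing the ``string-graph rigidity'' in part~(2): one must rule out that an $\mathrm{HL}$-derivation of a string-graph sequent passes through non-string graphs, and that an application of $(\div\to)$ or $(\times\to)$ could split a string graph in a way not corresponding to a contiguous substring. This requires a careful analysis of how hyperedge replacement interacts with the linear structure of a string graph: replacing an edge $e$ of a string graph $w^\bullet$ by a graph $H$ yields a string graph iff $H$ is a string graph and $e$ is replaced ``in place'', and the external-node discipline (two external nodes, matching the two attachment nodes of $e$) forces the replacement to respect the linear order. I would isolate this as a lemma — something like: if $G[e/H]$ (or $G$ obtained by a $(\times\to)$ or $(\div\to)$ step from a string-graph premise) is a string graph and all relevant types have $type=2$ with the denominators/multiplicands being string graphs, then the premise graphs are string graphs and the decomposition corresponds to splitting the underlying word at a single position. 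Granting this lemma, the induction in part~(2) goes through by the same case analysis on the last rule as sketched above, and the two parts together give the equivalence. The base case $p^\bullet\to p$ forces, via $type$ considerations, that $p$ is a $type$-$2$ primitive type, hence $p=tr(p)$ and $\mathrm{L}\vdash p\to p$.
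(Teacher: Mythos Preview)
Your proposal is correct and follows essentially the same route as the paper: both parts are proved by induction on derivations, with part~(1) a direct rule-by-rule simulation and part~(2) a case analysis on the last $\mathrm{HL}$-rule, using that every denominator and every multiplicand in $tr(Tp(\mathrm{L}))$ is a two-edge string graph to force the premises back into string-graph form. The paper does not isolate your ``string-graph rigidity'' as a separate lemma but argues it inline in each case, and it leaves implicit the check (which you flag explicitly) that the premises of each $\mathrm{HL}$-rule are again over $tr(Tp(\mathrm{L}))$; otherwise the arguments coincide.
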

This theorem establishes correctness of the embedding $tr$ in a strong way: in particular, it states that if a graph sequent $G\to T$ with its types being images of the Lambek calculus types is derivable, then $G$ is necessarily a string graph (this is not assumed from the beginning), and $G\to T$ corresponds to a derivable string sequent.
\begin{proof}
	The first statement is proved by a straightforward remodelling of a derivation as well as in Theorems \ref{embed_nld}, \ref{embed_lp}; in this section, we prove it in detail while omitting the proof for the rest of the abovementioned theorems.
	
	The proof is by induction on the size of derivation of $\Gamma\to C$ in $\mathrm{L}$. If $p\to p$ is an axiom, then $tr(p\to p)=p^\bullet\to p$ is an axiom of $\mathrm{HL}$.
	
	To prove the induction step, consider the last step of a derivation:
	\begin{itemize}
		\item Case $(/\to)$:
		$$
		\infer[(/\to)]{\Psi, B / A, \Pi, \Delta \to C}{\Psi, B, \Delta \to C & \Pi \to A}
		$$
		By the induction hypothesis, $tr(\Pi \to A)$ and $tr(\Psi, B, \Delta \to C)$ are derivable in $\mathrm{HL}$. Note that $SG=tr(\Psi, B, \Delta)=(tr(\Psi)tr(B)tr(\Delta))^\bullet$ is a string graph with a distinguished edge (call it $e_0$) labeled by $tr(B)$. Then we can construct the following derivation:
		$$
		\infer[(\div\to)]{SG[e_0/D][d_0\eqdef tr(B)\div D][d_1/tr(\Pi)^\bullet] \to tr(C)}{SG \to tr(C) & tr(\Pi)^\bullet \to tr(A)}
		$$
		Here $D$ is the denominator of the type $tr(B/A)$, $E_D=\{d_0,d_1\}$ and $lab(d_0)=\$$, $lab(d_1)=tr(A)$. Finally note that $SG[e_0/D][d_0\eqdef tr(B)\div D][d_1/tr(\Pi)^\bullet]=tr(\Psi, B / A, \Pi, \Delta)$.
		\item Case $(\to/)$:
		$$
		\infer[(\to/)]{\Gamma \to B / A}{\Gamma, A \to B}
		$$
		Here $C=B/A$. By the induction hypothesis, $\mathrm{HL}\vdash tr(\Gamma,A\to B)$. Denote $SG=tr(\Gamma)^\bullet$ and $D=(\$ tr(A))^\bullet$ (where $d_0\in E_D$ is labeled by $\$$). Then
		$$
		\infer[(\to\div)]{SG \to tr(B)\div D}{D[d_0/SG] \to tr(B)}
		$$
		Finishing the proof, we note that $D[d_0/SG]=(\Gamma,A)^\bullet$ and $tr(B)\div D=tr(B/A)$.
		\item Cases $(\backslash\to)$ and $(\to\backslash)$ are treated similarly.
		\item Case $(\cdot\to)$:
		$$
		\infer[(\cdot\to)]{\Psi, A \cdot B, \Delta \to C}{\Psi, A, B, \Delta \to C}
		$$
		is remodeled (applying the induction hypothesis) as follows (where the $tr(A\cdot B)$-labeled edge in $tr(\Psi,A\cdot B,\Delta)$ is denoted as $e_0$):
		$$
		\infer[(\times\to)]{tr(\Psi, A\cdot B, \Delta)^\bullet\to tr(C)}{tr(\Psi, A\cdot B, \Delta)^\bullet[e_0/(tr(A,B))^\bullet] \to tr(C)}
		$$
		\item Case $(\to\cdot)$:
		$$
		\infer[(\to\cdot)]{\Psi, \Delta \to A \cdot B}{\Psi \to A & \Delta \to B}
		$$
		is converted into
		$$
		\infer[(\to\times)]{tr(\Psi, \Delta)^\bullet \to tr(A \cdot B)}{tr(\Psi)^\bullet \to tr(A) & tr(\Delta)^\bullet \to tr(B)}
		$$
	\end{itemize}
	The second statement is of more interest since we know nothing about $G$ at first. The proof again is by induction on length of the derivation. If $G=p^\bullet$ and $C=p$, then obviously $G\to C=tr(p\to p)$.
	\\
	For the induction step consider the last step of a derivation in $\mathrm{HL}$. Below $A,B$ are some types belonging to $Tp(\mathrm{L})$.
	\begin{itemize}
		\item Case $(\div\to)$: after the application of this rule a type of the form $tr(A)\div D$ has to appear. Note that $D$ is either of the form $(\$tr(B))^\bullet$ or of the form $(tr(B)\$)^\bullet$ for some $B$. Let $E_D=\{d_0,d_1\}$ and let $lab(d_0)=\$,lab(d_1)=tr(B)$. Then the application of this rule is of the form
		$$
		\infer[(\div\to)]{H[e/D][d_0\eqdef tr(A)\div D][d_1/H_1]\to T}{H\to T & H_1\to tr(B)}
		$$
		By the induction hypothesis, $H\to T=tr(\Gamma,A,\Delta\to C)$ (since $lab(e)=tr(A)$) and $H_1\to tr(B)=tr(\Pi\to B)$. Therefore, depending on structure of $D$, we obtain that $H[e/D][d_0\eqdef tr(A)\div D][d_1/H_1]\to T$ equals either $tr(\Gamma,\Pi,B\backslash A, \Delta\to C)$ or $tr(\Gamma,A/B,\Pi,\Delta\to C)$, which completes this case.
		\item Case $(\to\div)$: 
		$$
		\infer[(\to\div)]{H\to tr(A)\div D}{D[d_0/H]\to tr(A)}
		$$
		By the induction hypothesis, $D[d_0/H]\to tr(A)$ corresponds to a sequent of the Lambek calculus via $tr$. Again, $D$ is of one of the following forms: $(\$tr(B))^\bullet$ or $(tr(B)\$)^\bullet$. Then the only possibility for $H$ is to be a string graph: $H\eqdef (tr(\Pi))^\bullet$. Thus, $D[d_0/H]$ equals either $tr(\Pi,B)^\bullet$ or $tr(B,\Pi)^\bullet$, and we can model this step in the Lambek calculus by means of $(\to/)$ or $(\to\backslash)$ resp.
		\item Case $(\times\to)$: by the induction hypothesis, a premise has to be of the form $tr(\Pi\to C)$. Then this premise is obtained from a conclustion by replacing an edge labeled by $tr(A\cdot B)$ by a subgraph of the form $(tr(A)tr(B))^\bullet$. This implies that $\Pi=\Gamma,A,B,\Delta$, and that this step can be modeled in $\mathrm{L}$ with the rule $(\cdot\to)$.
		\item Case $(\to\times)$: by the induction hypothesis, all antecedents of premises in this rule are string graphs. Since a succedent of the conclusion has the form $tr(A\cdot B)$, there are two premises, they have antecedents $tr(\Gamma)^\bullet$ and $tr(\Delta)^\bullet$ resp. and they are substituted in $tr(A\cdot B)$. This yields that $G\to T=tr(\Gamma,\Delta\to A\cdot B)$ and that this rule corresponds to $(\to\cdot)$, as expected.
	\end{itemize}
\end{proof}
Look also at Examples \ref{ex_to_div} and \ref{ex_times_to}, which reflect some parts of the proof.

In the remaining subsections we consider different variants of the Lambek calculus and show how they are embedded in $\mathrm{HL}$.

\subsection{Embedding of $\mathrm{LP}$}\label{sec_embed_lp}
$\mathrm{LP}$ is $\mathrm{L}$ enriched with the additional permutation rule:
$$
\infer[(P).]{\Gamma,A,B,\Delta\to C}{\Gamma,B,A,\Delta\to C}
$$
This rule is not logical but structural; however, it can be modeled in $\mathrm{HL}$ by such graphs where permutations of edges lead to isomorphic graphs. One of the ways of doing this is by using the following translation function $tr_P$:
\begin{itemize}
	\item $tr_{\mathrm{P}}(p)=p$;
	\item $tr_{\mathrm{P}}(A/B)=tr_{\mathrm{P}}(B\backslash A)=tr_{\mathrm{P}}(A)\div\left({\tikz[baseline=.1ex]{
			\node (V) {};
			\node[hyperedge, above=-3mm of V] (E1) {$\$$};
			\node[node,right= 6mm of E1,label=below:{\scriptsize $(1)$}] (N1) {};
			\node[hyperedge,right= 6mm of N1] (E2) {$\;tr_P(B)\;$};
			\draw[-,black] (E1) -- node[above] {\scriptsize 1} (N1);
			\draw[-,black] (N1) -- node[above] {\scriptsize 1} (E2);
	}}\right)$;
	\item $tr_{\mathrm{P}}(A\cdot B)=\times\left(
	{\tikz[baseline=.1ex]{
			\node (V) {};
			\node[hyperedge, above=-3mm of V] (E1) {$\;tr_P(A)\;$};
			\node[node,right= 6mm of E1,label=below:{\scriptsize $(1)$}] (N1) {};
			\node[hyperedge,right=6mm of N1] (E2) {$\;tr_P(B)\;$};
			\draw[-,black] (E1) -- node[above] {\scriptsize 1} (N1);
			\draw[-,black] (N1) -- node[above] {\scriptsize 1} (E2);
	}}
	\right)$.	
\end{itemize}
If $\Gamma=T_1,\dots,T_n$ is a sequence of types, then $tr_{\mathrm{P}}(\Gamma)\eqdef \langle \{v_0\},\{e_i\}_{i=1}^n,att,lab,v_0\rangle$ where $att(e_i)=v_0$, $lab(e_i)=tr_{\mathrm{P}}(T_i)$. As before, $tr_{\mathrm{P}}(\Gamma\to A)\eqdef tr_{\mathrm{P}}(\Gamma)\to tr_{\mathrm{P}}(A)$.
\begin{example}
	$p, q\to q\cdot p$ turns into the sequent
	$\;
	{\tikz[baseline=.1ex]{
			\node (V) {};
			\node[hyperedge, above=-3mm of V] (E1) {$p$};
			\node[node,right= 4mm of E1,label=below:{\scriptsize $(1)$}] (N1) {};
			\node[hyperedge,right= 4mm of N1] (E2) {$q$};
			\draw[-,black] (E1) -- node[above] {\scriptsize 1} (N1);
			\draw[-,black] (N1) -- node[above] {\scriptsize 1} (E2);
	}}
	\to \times\left(
	{\tikz[baseline=.1ex]{
			\node (V) {};
			\node[hyperedge, above=-3mm of V] (E1) {$q$};
			\node[node,right= 4mm of E1,label=below:{\scriptsize $(1)$}] (N1) {};
			\node[hyperedge,right= 4mm of N1] (E2) {$p$};
			\draw[-,black] (E1) -- node[above] {\scriptsize 1} (N1);
			\draw[-,black] (N1) -- node[above] {\scriptsize 1} (E2);
	}}
	\right)
	$.
\end{example}
\begin{theorem}\label{embed_lp}\leavevmode
	\begin{enumerate}
		\item If $\mathrm{LP}\vdash \Gamma\to C$, then $\mathrm{HL}\vdash tr_{\mathrm{P}}(\Gamma\to C)$.
		\item If $\mathrm{HL}\vdash G\to T$ is a derivable graph sequent over $tr_P(Tp(\mathrm{L}))$, then $G\to T=tr_{\mathrm{P}}(\Gamma\to C)$ for some $\Gamma$ and $C$ and $\mathrm{LP}\vdash \Gamma\to C$.
	\end{enumerate}
\end{theorem}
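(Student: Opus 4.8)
The plan is to prove both statements by induction on derivations, following the structure of the proof of Theorem~\ref{embed_lambek} but systematically replacing ``string graph'' by ``bouquet'': for a sequence $\Gamma=T_1,\dots,T_n$ the graph $tr_{\mathrm{P}}(\Gamma)$ consists of a single node $v_0$ carrying $n$ pending edges labelled $tr_{\mathrm{P}}(T_1),\dots,tr_{\mathrm{P}}(T_n)$. Two preliminary observations drive everything. First, every type in $tr_{\mathrm{P}}(Tp(\mathrm{L}))$ has $type=1$: primitives go to type-$1$ primitives, and in both $tr_{\mathrm{P}}(A/B)$ and $tr_{\mathrm{P}}(A\cdot B)$ the relevant edge of the denominator (resp.\ numerator) graph is attached to exactly one node; hence all the replacement and relabelling operations we use only ever fuse graphs at their unique external node and add or delete pending edges there. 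Second --- and this is the whole point of the construction --- $tr_{\mathrm{P}}(\Gamma)$ depends only on the multiset $\{tr_{\mathrm{P}}(T_1),\dots,tr_{\mathrm{P}}(T_n)\}$, since permuting the pending edges of a one-node graph yields an isomorphic graph and isomorphic graphs are identified; dually, $\mathrm{LP}$-derivability is invariant under permuting the antecedent via rule $(P)$. So ``$G\to T=tr_{\mathrm{P}}(\Gamma\to C)$'' should be read as: $T=tr_{\mathrm{P}}(C)$ and $G$ is the bouquet whose edge-label multiset is $\{tr_{\mathrm{P}}(B)\mid B\in\Gamma\}$, the order of $\Gamma$ being immaterial on both sides.

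For statement~1 I would induct on the $\mathrm{LP}$-derivation. The axiom $p\to p$ maps to the $\mathrm{HL}$-axiom $p^\bullet\to p$ (the handle of a type-$1$ label is exactly the one-edge bouquet). The rules $(/\to)$ and $(\backslash\to)$, which now literally coincide since $tr_{\mathrm{P}}(A/B)=tr_{\mathrm{P}}(B\backslash A)$, are handled by $(\div\to)$ exactly as in Theorem~\ref{embed_lambek}: replace the $tr_{\mathrm{P}}(B)$-edge of the bouquet $tr_{\mathrm{P}}(\Gamma,B,\Delta)$ by the two-edge denominator $D$ (fused at $v_0$), relabel the $\$$-edge to $tr_{\mathrm{P}}(B/A)$, and plug $tr_{\mathrm{P}}(\Pi)$ into the remaining edge; the result is a bouquet with the label multiset of $tr_{\mathrm{P}}(\Gamma,B/A,\Pi,\Delta)$. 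The rules $(\to/),(\to\backslash),(\cdot\to),(\to\cdot)$ go through $(\to\div),(\times\to),(\to\times)$ just as before, each step being a fusion of one-node graphs. Finally $(P)$ requires nothing: its premise and conclusion have the same translation.

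For statement~2 the induction is on the $\mathrm{HL}$-derivation, and two invariants must be maintained. (i) $tr_{\mathrm{P}}(Tp(\mathrm{L}))$ is closed under subtypes (the numerator and the labels occurring in the denominator/numerator graph of a $tr_{\mathrm{P}}$-image are again $tr_{\mathrm{P}}$-images, and in fact of subtypes of the original $\mathrm{L}$-type), and inspecting the four $\mathrm{HL}$-rules shows every type occurring in a premise is a subtype of a type occurring in the conclusion; hence if the endsequent is over $tr_{\mathrm{P}}(Tp(\mathrm{L}))$, so is every sequent in the derivation, and every type ever introduced is of the shape $tr_{\mathrm{P}}(X/Y)$, $tr_{\mathrm{P}}(Y\backslash X)$, $tr_{\mathrm{P}}(X\cdot Y)$ or $tr_{\mathrm{P}}(p)$. (ii) The antecedent of every sequent in such a derivation is a bouquet; the axiom case is clear, and in each rule the denominator graph $D$ (in $(\div\to),(\to\div)$) and the graph $M$ (in $(\times\to),(\to\times)$) are single-node graphs by~(i), so the replacements involved preserve bouquet-ness --- in particular the bottom-up rules $(\to\div)$ and $(\times\to)$ force the conclusion's antecedent to be a bouquet because the premise's is. Granting (i)–(ii), reading off the $\mathrm{LP}$-derivation is routine: in $(\div\to)$ the relabelled $\$$-edge carries some $tr_{\mathrm{P}}(X/Y)$ (or $tr_{\mathrm{P}}(Y\backslash X)$), by the induction hypothesis the premises decode to $\mathrm{LP}$-derivable $\Gamma,X,\Delta\to C$ and $\Pi\to Y$, and since $\mathrm{LP}$ permutes freely we obtain $\Gamma,X/Y,\Pi,\Delta\to C$ by $(/\to)$, whose translation is the conclusion bouquet; the cases $(\to\div),(\times\to),(\to\times)$ are symmetric, using $(\to/)/(\to\backslash)$, $(\cdot\to)$, $(\to\cdot)$ respectively.

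The main obstacle is statement~2, concretely the two invariants carried through the induction: invariant~(i) is what lets us even talk about the premises as translations of $\mathrm{L}$-types, and invariant~(ii) is what guarantees that an ``inverse'' of $tr_{\mathrm{P}}$ exists on antecedents at all --- a priori an $\mathrm{HL}$-antecedent built from type-$1$ labels could scatter its edges over several nodes, which would have no $\mathrm{LP}$-sequent preimage. The payoff of the construction is precisely that $tr_{\mathrm{P}}$ is injective only up to reordering of $\Gamma$, and this slack is exactly what the permutation rule $(P)$ consumes; this is why the ``single-node'' translation captures $\mathrm{LP}$ rather than $\mathrm{L}$.
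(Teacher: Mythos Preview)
Your approach is exactly what the paper intends: it explicitly says the proof is ``a straightforward remodelling of a derivation'' analogous to Theorem~\ref{embed_lambek}, and your bouquet invariants (i) and (ii) are the right analogue of the string-graph invariant there.

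There is, however, a genuine gap in your treatment of statement~2, located in the $(\to\div)$ case, and it is not merely cosmetic. Your invariant~(ii) establishes that every antecedent in the derivation is a bouquet, but it does \emph{not} establish that the bouquet has at least one edge. In the type-$2$ (string-graph) setting of Theorem~\ref{embed_lambek} this cannot happen: an edgeless $F$ of type~$2$ would have two distinct external nodes, and substituting it into the $\$$-slot of $(\$\,tr(B))^\bullet$ would leave an isolated node, so the premise could not be a string graph. But in the type-$1$ setting the single-node, zero-edge graph $F_0=\langle\{v_0\},\emptyset,\emptyset,\emptyset,v_0\rangle$ is a perfectly good bouquet, and substituting it into the $\$$-slot of the denominator of $tr_{\mathrm P}(p/p)$ yields exactly $p^\bullet$. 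Hence from the axiom $p^\bullet\to p$ one derives $F_0\to tr_{\mathrm P}(p/p)$ in $\mathrm{HL}$. This is a derivable graph sequent over $tr_{\mathrm P}(Tp(\mathrm L))$, yet $tr_{\mathrm P}(\Gamma)=F_0$ forces $\Gamma=\Lambda$, and $\to p/p$ is not derivable in $\mathrm{LP}$ as defined here ($\mathrm L$ plus permutation, with the Lambek nonemptiness restriction on $(\to/)$, $(\to\backslash)$ intact).

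So your ``routine'' decoding via $(\to/)$ breaks precisely when the residual $\Pi$ is empty, and that case does occur. Either statement~2 needs to be read with $\mathrm{LP}$ replaced by its variant allowing empty antecedents, or the claim must be weakened; your proof, as written, does not address this and therefore does not establish statement~2 in the form stated. The paper itself gives no details here, so the issue is arguably inherited from the formulation of the theorem, but you should flag it rather than paper over it.
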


\subsection{Embedding of $\mathrm{NL\diamondsuit}$}
$\mathrm{NL\diamondsuit}$ presented in \cite{Moortgat96} is the variant of $\mathrm{L}$ enriched with unary modalities; it lacks structural rules. Types of $\mathrm{NL\diamondsuit}$ are built from primitive types using $\backslash,/,\cdot$ and two unary operators $\diamondsuit,\square$. This variant of the Lambek calculus is nonassociative: antecedents of sequents are considered to be bracketed structures, defined inductively as follows: $\mathcal{T}\eqdef Tp(\mathrm{NL\diamondsuit})\,|\,(\mathcal{T},\mathcal{T})\,|\,(\mathcal{T})^\diamond$. Sequents then are of the form $\Gamma\to A$ where $\Gamma\in\mathcal{T}$ and $A\in Tp(\mathrm{NL\diamondsuit})$.

Rules for $\backslash,\cdot,/$ are similar to those for $\mathrm{L}$ with the difference that they cannot act through brackets
(here $\Gamma[\Delta]$ denotes the term $\Gamma$ containing a distinguished occurence of the subterm $\Delta$):
$$
\infer[(\backslash\to)]{\Gamma[(\Pi, A \backslash B)] \to C}{\Pi \to A & \Gamma[B]\to C}
\qquad
\infer[(\to\backslash)]{\Pi \to A \backslash B}{(A, \Pi) \to B}
\qquad
\infer[(\cdot\to)]{\Gamma[A \cdot B] \to C}{\Gamma[(A,B)] \to C}
$$
$$
\infer[(/\to)]{\Gamma[(B / A, \Pi)]\to C}{\Pi \to A & \Gamma[B] \to C}
\qquad
\infer[(\to/)]{\Pi \to B / A}{(\Pi, A) \to B}
\qquad
\infer[(\to\cdot)]{(\Gamma, \Delta) \to A \cdot B}{\Gamma \to A & \Delta \to B}
$$
The following rules for $\square,\diamondsuit$ are added (where $\Gamma[\Delta]$ denotes the term $\Gamma$ containing a distinguished occurence of the subterm $\Delta$):
$$
\infer[(\diamondsuit\to)]{\Gamma[\diamondsuit A]\to B}{\Gamma[(A)^\diamond]\to B}
\quad
\infer[(\square\to)]{\Gamma[(\square A)^\diamond]\to B}{\Gamma[A]\to B}
\quad
\infer[(\to\diamondsuit)]{(\Gamma)^\diamond\to \diamondsuit A}{\Gamma\to A}
\quad
\infer[(\to\square)]{\Gamma\to \square A}{(\Gamma)^\diamond\to A}
$$
Moortgat notices in \cite{Moortgat96} that $\diamondsuit$ and $\square$ are ``truncated forms of product and implication''. Below we show that this statement can be understood literally: after we embed $\mathrm{NL\diamondsuit}$ in $\mathrm{HL}$, unary connectives become special cases of $\times$ and $\div$. Let us fix primitive types $p_\diamond,p_{l},p_r\in Pr$ (with type equal to 2). Consider the following graphs with $X,Y$ being parameters:
$$Br(X,Y)={\tikz[baseline=.1ex]{
			\node (V) {};
			\node[hyperedge,above=-2mm of V] (E1) {$X$};
			\node[node,right=5mm of E1] (N1) {};
			\node[node,right=10mm of N1, label=above:{\scriptsize $(1)$}] (N2) {};
			\node[node,right=10mm of N2] (N3) {};
			\node[hyperedge,right=5mm of N3] (E2) {$Y$};
			
			\draw[-,black] (E1) -- node[above] {\scriptsize 1} (N1);
			\draw[-,black] (N3) -- node[above] {\scriptsize 1} (E2);
			\draw[>=stealth,->,black] (N2) -- node[above] {$p_l$} (N1);
			\draw[>=stealth,->,black] (N2) -- node[above] {$p_r$} (N3);
	}}
\qquad\qquad
Diam(X)=\mbox{{\tikz[baseline=.1ex]{
			\node (V) {};
			\node[node,above=0.5mm of V, label=left:{\scriptsize $(1)$}] (N2) {};
			\node[node,right=10mm of N2] (N3) {};
			\node[hyperedge,right=5mm of N3] (E2) {$X$};
			
			\draw[-,black] (N3) -- node[above] {\scriptsize 1} (E2);
			\draw[>=stealth,->,black] (N2) -- node[above] {$p_\diamond$} (N3);
	}}}$$
	Let $e_1$ and $e_2$ be the $X$-labeled and the $Y$-labeled edge resp. in $Br(X,Y)$, and let $e$ be the $X$-labeled edge in $Diam(X)$. 
	
	We introduce the following translation function $tr_{\diamond}$:
	\begin{multicols}{2}
	\begin{itemize}
	\item $tr_\diamond(p)=p,\quad p\in Pr,type(p)=1;$
	\item $tr_\diamond((A/B))=tr_\diamond(A)\div\left( Br(\$,tr_\diamond(B))\right)$;
	\item $tr_\diamond((B\backslash A))=tr_\diamond(A)\div\left( Br(tr_\diamond(B),\$)\right)$;
	
	\item $tr_\diamond(\square (A))=tr_\diamond(A)\div\left(Diam(\$)\right)$;
	\item $tr_\diamond((A\cdot B))=\times\left(Br(tr_\diamond(A),tr_\diamond(B))\right)$;
	\item $tr_\diamond(\diamondsuit (A))=\times\left(Diam(tr_\diamond(A))\right)$.
\end{itemize}
\end{multicols}
If $\Gamma,\Delta$ are sequences of types, then $tr_{\diamond}((\Gamma,\Delta))\eqdef Br(X,Y)[e_1/tr_{\diamond}(\Gamma),e_2/tr_{\diamond}(\Delta)]$. Similarly, $tr_{\diamond}((\Gamma)^\diamond)\eqdef Diam(X)[e/tr_{\diamond}(\Gamma)]$. Finally, $tr_{\diamond}(\Gamma\to A)\eqdef tr_{\diamond}(\Gamma)\to tr_{\diamond}(A)$.
\begin{example}
		$$
		tr_\diamond\big(\big((p,q)\big)^\diamond,\big((r)^\diamond,(s,t)\big)\big)\;=\;\mbox{{\tikz[baseline=.1ex]{
					\node (V) {};
					\node[node, above=12mm of V, label=above:{\scriptsize $(1)$}] (N1) {};
					\node[node, below left=5mm and 9mm of N1] (N2) {};
					\node[node, below right=5mm and 9mm of N1] (N3) {};
					\node[node, below= 7mm of N2] (N4) {};
					\node[node, below left=7mm and 4mm of N4] (N5) {};
					\node[node, below right=7mm and 4mm of N4] (N6) {};
					\node[node, below left=7mm and 6mm of N3] (N7) {};
					\node[node, below right=7mm and 6mm of N3] (N8) {};
					\node[node, below= 7mm of N7] (N9) {};
					\node[node, below left=7.4mm and 4mm of N8] (N10) {};
					\node[node, below right=7.4mm and 4mm of N8] (N11) {};

					\node[hyperedge,below=3mm of N5] (E1) {$p$};
					\node[hyperedge,below=3mm of N6] (E2) {$q$};
					\node[hyperedge,below=3mm of N9] (E3) {$r$};
					\node[hyperedge,below=3mm of N10] (E4) {$s$};
					\node[hyperedge,below=3mm of N11] (E5) {$t$};
					
					\draw[-,black] (N5) -- node[right] {\scriptsize 1} (E1);
					\draw[-,black] (N6) -- node[right] {\scriptsize 1} (E2);
					\draw[-,black] (N9) -- node[right] {\scriptsize 1} (E3);
					\draw[-,black] (N10) -- node[right] {\scriptsize 1} (E4);
					\draw[-,black] (N11) -- node[right] {\scriptsize 1} (E5);

					\draw[->,black] (N1) -- node[above] {$p_l$} (N2);
					\draw[->,black] (N1) -- node[above] {$p_r$} (N3);
					\draw[->,black] (N2) -- node[left] {$p_\diamond$} (N4);
					\draw[->,black] (N4) -- node[left] {$p_l$} (N5);
					\draw[->,black] (N4) -- node[right] {$p_r$} (N6);
					\draw[->,black] (N3) -- node[left] {$p_l$} (N7);
					\draw[->,black] (N3) -- node[right] {$p_r$} (N8);
					\draw[->,black] (N7) -- node[left] {$p_\diamond$} (N9);
					\draw[->,black] (N8) -- node[left] {$p_l$} (N10);
					\draw[->,black] (N8) -- node[right] {$p_r$} (N11);
		}}}
		$$
\end{example}
\begin{theorem}\label{embed_nld}\leavevmode
	\begin{enumerate}
		\item If $\mathrm{NL\diamondsuit}\vdash \Gamma\to C$, then we have $\mathrm{HL}\vdash tr_{\diamond}(\Gamma\to C)$.
		\item If $\mathrm{HL}\vdash G\to T$ is a derivable graph sequent over $tr_\diamond(Tp(\mathrm{NL\diamondsuit}))\cup\{p_l,p_r,p_\diamond\}$, then $G\to T=tr_{\diamond}(\Gamma\to C)$ for some $\Gamma$ and $C$ and $\mathrm{NL\diamondsuit}\vdash \Gamma\to C$.
	\end{enumerate}
\end{theorem}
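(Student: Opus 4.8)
The plan is to follow the same template as the proof of Theorem~\ref{embed_lambek}, proving the two directions by induction on the size of a derivation. The first statement requires a straightforward simulation: given a derivation of $\Gamma\to C$ in $\mathrm{NL\diamondsuit}$, I rebuild it in $\mathrm{HL}$ under $tr_\diamond$. The key observation is that $tr_\diamond$ is compositional with respect to bracketing: since $tr_\diamond((\Gamma,\Delta))=Br(X,Y)[e_1/tr_\diamond(\Gamma),e_2/tr_\diamond(\Delta)]$ and $tr_\diamond((\Gamma)^\diamond)=Diam(X)[e/tr_\diamond(\Gamma)]$, a distinguished subterm occurrence $\Gamma[\Delta]$ translates to a graph with a distinguished subgraph $tr_\diamond(\Delta)$ sitting inside $tr_\diamond(\Gamma[\cdot])$, exactly the situation the rules $(\div\to)$, $(\times\to)$ manipulate. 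So each $\mathrm{NL\diamondsuit}$ rule maps to one $\mathrm{HL}$ rule: $(/\to),(\backslash\to)$ become $(\div\to)$ with $D=Br(\$,tr_\diamond(B))$ resp.\ $Br(tr_\diamond(B),\$)$; $(\square\to)$ becomes $(\div\to)$ with $D=Diam(\$)$ and an empty list of non-\$ edges; $(\to/),(\to\backslash),(\to\square)$ become $(\to\div)$; $(\cdot\to),(\diamondsuit\to)$ become $(\times\to)$; and $(\to\cdot),(\to\diamondsuit)$ become $(\to\times)$ with $M=Br(\cdot,\cdot)$ resp.\ $Diam(\cdot)$. In each case one checks that the replacement operations on graphs reproduce exactly the term substitution on the $\mathrm{NL\diamondsuit}$ side; this is the routine part.

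For the second statement I again induct on the length of the $\mathrm{HL}$-derivation of $G\to T$, where at the outset nothing is known about $G$ beyond that it is labeled over $tr_\diamond(Tp(\mathrm{NL\diamondsuit}))\cup\{p_l,p_r,p_\diamond\}$. The axiom case is immediate since a handle $p^\bullet$ with $type(p)=1$ is $tr_\diamond(p\to p)$. For the induction step, I analyze the last rule. The crux is to show that whatever $\mathrm{HL}$-rule was applied, the premises (which by the induction hypothesis are translations of $\mathrm{NL\diamondsuit}$-sequents) force the conclusion to also be a translation, and that the rule corresponds to a legal $\mathrm{NL\diamondsuit}$-inference. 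For $(\div\to)$: the type introduced must be of the form $tr_\diamond(A)\div D$ with $D\in\{Br(\$,tr_\diamond(B)),Br(tr_\diamond(B),\$),Diam(\$)\}$, since these are the only shapes a denominator takes in the image of $tr_\diamond$; inspecting how $D$ is glued into the antecedent of the premise (itself a $tr_\diamond$-image) shows the result is the $tr_\diamond$-image of an $\mathrm{NL\diamondsuit}$-sequent obtained by $(/\to),(\backslash\to)$, or $(\square\to)$ respectively. For $(\to\div)$: knowing $D[d_0/H]\to tr_\diamond(A)$ is a $tr_\diamond$-image and that $D$ has one of those three shapes pins down $H$ to be a $tr_\diamond$-image of a bracketed term, and the step corresponds to $(\to/),(\to\backslash)$, or $(\to\square)$. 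The $(\times\to)$ and $(\to\times)$ cases are analogous using $M\in\{Br(\cdot,\cdot),Diam(\cdot)\}$, mapping to $(\cdot\to),(\diamondsuit\to)$ and $(\to\cdot),(\to\diamondsuit)$.

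The main obstacle is establishing the rigidity needed in the second direction: I must argue that the ``marker'' primitive types $p_l,p_r,p_\diamond$ and the rigid shapes $Br$ and $Diam$ genuinely prevent any graph over the allowed label set from being derivable unless it decomposes as a $tr_\diamond$-image of a well-bracketed $\mathrm{NL\diamondsuit}$-term. Concretely, one must rule out ``spurious'' derivations that exploit the freedom of hypergraph surgery --- e.g.\ a $(\div\to)$ step where the edge $e$ being replaced is not where one expects, or where the gluing of $Br$/$Diam$ into $H$ creates a graph that happens to coincide with some $tr_\diamond$-image by accident but via an illegitimate route. The handling here mirrors the remark following Theorem~\ref{embed_lambek} that $G$ ``has to be a string graph'' in the $\mathrm{L}$-case: here the analogous structural fact is that $G$ has to be a ``tree-shaped'' graph built from the $Br$ and $Diam$ gadgets, and the argument is that the $p_l,p_r$ edges force a rooted binary-branching skeleton while $p_\diamond$ edges force the unary diamond nodes, so the decomposition is forced and unique. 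I expect this bookkeeping --- carefully tracking node degrees and the acyclic branching structure imposed by the marker edges --- to be the only genuinely delicate part; everything else is a mechanical transcription of $\mathrm{NL\diamondsuit}$-rules to $\mathrm{HL}$-rules and back.
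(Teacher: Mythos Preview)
Your overall architecture is right and matches the paper: both directions by induction on derivation length, with the first direction a mechanical rule-by-rule simulation. Where your proposal diverges from the paper is in how the delicate part of the second direction is located and handled.

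You describe the obstacle as showing that the conclusion graph $G$ must have the correct ``tree-shaped'' $Br$/$Diam$ skeleton, to be attacked by tracking node degrees and acyclicity. That is not where the paper places the difficulty, and it is not clear your approach would close the gap. The actual problem is this: in the $(\div\to)$ case (and symmetrically in the $(\to\times)$ case), the denominator $D$---say $Br(\$,tr_\diamond(B))$---has, besides the \$-edge and the $tr_\diamond(B)$-edge, two further edges labeled $p_l$ and $p_r$. The rule therefore produces \emph{side premises} of the form $F_l\to p_l$ and $F_r\to p_r$ (and analogously $F_\diamond\to p_\diamond$ in the $Diam$ cases). The induction hypothesis cannot be invoked on these premises, because $p_l,p_r,p_\diamond$ are not of the form $tr_\diamond(C)$ for any $C\in Tp(\mathrm{NL\diamondsuit})$. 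Unless you can force $F_l=p_l^\bullet$, $F_r=p_r^\bullet$, $F_\diamond=p_\diamond^\bullet$, arbitrary graphs could be spliced in at those positions and the conclusion need not be a $tr_\diamond$-image at all.

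The paper resolves this not by a geometric argument on $G$ but by a logical lemma proved independently: Corollary~\ref{wolf} (from the ``wolf lemma''~\ref{wolf1}). One checks that each of $p_l,p_r,p_\diamond$ is \emph{lonely} in every type of $tr_\diamond(Tp(\mathrm{NL\diamondsuit}))\cup\{p_l,p_r,p_\diamond\}$ (it only occurs as an edge label of a $\times(M)$ with $|E_M|\ge 2$, namely inside a $Br$ or $Diam$ gadget) and that no skeleton subtypes arise; Corollary~\ref{wolf} then yields directly that any derivable $F\to p_l$ over this label set satisfies $F=p_l^\bullet$, and likewise for $p_r,p_\diamond$. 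With the side premises pinned down to axioms, the rest of the case analysis goes through exactly as in Theorem~\ref{embed_lambek}. Your sketch does not isolate these side premises as the crux, and the proposed degree/acyclicity bookkeeping would at best have to re-derive the content of the wolf lemma in an ad hoc way.
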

\begin{remark}
	The nonassociative Lambek calculus $\mathrm{NL}$ can be embedded in $\mathrm{HL}$ as well: it suffices not to consider $\diamondsuit,\square$ and graphs with $p_\diamond$-labeled edges in the above construction. The nonassociative Lambek calculus with permutation $\mathrm{NLP}$ can be embedded in $\mathrm{HL}$ similarly with the only difference that we replace $p_l$ and $p_r$ by a single distinguished type $p_{br}$. The multimodal calculus introduced in \cite{Moortgat96} can also be modeled with this construction, e.g., by introducing indexed primitive types $p_l^i$, $p_r^i$, $p_\diamond^i$ for different modes.
\end{remark}

\subsection{Embedding of $\mathrm{L}_{\mathbf{1}}^\ast$}\label{sec_embed_l1}
In the string case there is a variant of L where empty antecedents are allowed, and there is an additional type $\mathbf{1}$, called the unit. There are one axiom and one rule for it:
$$
\infer[]{\to\mathbf{1}}{}\qquad
\infer[(\mathbf{1}\to)]{\Gamma,\mathbf{1},\Delta\to A}{\Gamma,\Delta\to A}
$$
This extension of L is called the Lambek calculus with the unit and it is denoted by $\mathrm{L}_{\mathbf{1}}^\ast$. Certainly, we wish this calculus to be embedded in HL as well. Our definition of a string graph (Definition \ref{def_str_gr}), though, does not include the case of an empty string so we cannot directly use $tr$ from Section \ref{sec_embed_lambek}. However, if we allow coincidences of external nodes or of attachment nodes, then we define a function $tr_{\mathbf{1}}$ similarly to $tr$ from Section \ref{sec_embed_lambek}; we add the following definition:
$$tr_{\mathbf{1}}(\mathbf{1})\eqdef \times(\langle\{v_0\},\emptyset,\emptyset,\emptyset,v_0v_0\rangle)=\times\left(
	\mbox{{\tikz[baseline=.1ex]{
				\node (R) {};
				\node[node, above=-1mm of R] (N) {};
				\node[left=0mm of N] {\scriptsize $(1)$};
				\node[right=0mm of N] {\scriptsize $(2)$};
	}}}\right)$$
Let us also extend $tr_{\mathbf{1}}$ to sequents as follows: $tr_{\mathbf{1}}(\Gamma\to A)=tr_{\mathbf{1}}(\Gamma)^\bullet\to tr_{\mathbf{1}}(A)$. If $\Gamma$ is empty, we put $(\Lambda)^\bullet\eqdef \langle\{v_0\},\emptyset,\emptyset,\emptyset,v_0v_0\rangle$ (this is consistent with Definition \ref{def_str_gr}).
\begin{theorem}\label{embed_l1}
	Let $\Gamma\to C$ be a sequent over $Tp_{\mathbf{1}}$.
	Then $\mathrm{L}_{\mathbf{1}}^\ast\vdash \Gamma\to C\;\Leftrightarrow\;\mathrm{HL}\vdash tr_{\mathbf{1}}(\Gamma\to C)$. 
\end{theorem}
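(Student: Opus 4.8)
The plan is to mimic the structure of the proof of Theorem~\ref{embed_lambek}, splitting into soundness ($\Rightarrow$) and faithfulness ($\Leftarrow$), and handling the unit separately from the multiplicative/divisional connectives (which are inherited essentially verbatim from the $tr$ case). The key technical point absent in Section~\ref{sec_embed_lambek} is that $tr_{\mathbf 1}$ maps empty antecedents to the one-node graph $(\Lambda)^\bullet = \langle\{v_0\},\emptyset,\emptyset,\emptyset,v_0v_0\rangle$ and the unit $\mathbf 1$ to $\times((\Lambda)^\bullet)$, so I must first record the basic facts that string-graph concatenation still behaves correctly in the presence of the empty string: $(w_1 w_2)^\bullet = (w_1)^\bullet \cdot (w_2)^\bullet$ up to the obvious node identifications, even when $w_1$ or $w_2$ is $\Lambda$, and that replacing a $tr_{\mathbf 1}(\mathbf 1)$-labeled edge by $(\Lambda)^\bullet$ in a string graph simply deletes that edge (merging its two attachment nodes), yielding again a string graph. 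All of this uses the loops-allowed variant of the definitions flagged in Remark~\ref{rem_loops}, which is why the theorem is stated in this subsection.

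For the ($\Rightarrow$) direction I would induct on the $\mathrm{L}^\ast_{\mathbf 1}$-derivation. The cases of the axiom $p\to p$ and of the six Lambek rules are translated exactly as in the proof of Theorem~\ref{embed_lambek}, observing only that the arguments there never used nonemptiness of the sequences $\Gamma,\Delta,\Pi$ in an essential way once loops are permitted. The new axiom ${\to}\,\mathbf 1$ is handled by the remark following rule $(\to\times)$: since the graph $(\Lambda)^\bullet$ has no edges, $(\to\times)$ with zero premises derives $(\Lambda)^\bullet \to \times((\Lambda)^\bullet) = tr_{\mathbf 1}({}\to\mathbf 1)$. For the rule $(\mathbf 1\to)$, from $\mathrm{HL}\vdash tr_{\mathbf 1}(\Gamma,\Delta)^\bullet \to tr_{\mathbf 1}(C)$ I apply $(\times\to)$ to the $\times((\Lambda)^\bullet)$-labeled edge that I wish to insert: $(\times\to)$ says that compressing an occurrence of a subgraph isomorphic to $(\Lambda)^\bullet$ (a single node, no edges) into one $\times((\Lambda)^\bullet)$-edge is derivation-preserving, and inserting such an edge along the path of the string graph $tr_{\mathbf 1}(\Gamma,\Delta)^\bullet$ yields exactly $tr_{\mathbf 1}(\Gamma,\mathbf 1,\Delta)^\bullet$. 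Here I must be slightly careful that the subgraph being compressed is a single node lying on the string and that the resulting edge is attached to two copies of that node (a loop), matching the definition of $tr_{\mathbf 1}(\mathbf 1)$.

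For the ($\Leftarrow$) direction I again induct on the $\mathrm{HL}$-derivation of $G\to T$, where $G$ is a priori an arbitrary graph over $tr_{\mathbf 1}(Tp_{\mathbf 1})$, and I strengthen the induction hypothesis to assert that $G$ is necessarily a (possibly degenerate, i.e.\ single-node) string graph and $G\to T = tr_{\mathbf 1}(\Gamma\to C)$ for some $\Gamma$, $C$ with $\mathrm{L}^\ast_{\mathbf 1}\vdash\Gamma\to C$. The cases $(\div\to)$, $(\to\div)$, and $(\to\times)$ with a $\div$- or $\cdot$-succedent go through as in Theorem~\ref{embed_lambek}. The genuinely new subcases are: $(\to\times)$ with succedent $\times((\Lambda)^\bullet) = tr_{\mathbf 1}(\mathbf 1)$, which has zero premises and forces $G = (\Lambda)^\bullet$, matching $tr_{\mathbf 1}({}\to\mathbf 1)$ with $\mathrm{L}^\ast_{\mathbf 1}\vdash{}\to\mathbf 1$; and $(\times\to)$ applied to an edge labeled $\times(F)$ where, since all type labels come from $tr_{\mathbf 1}(Tp_{\mathbf 1})$, $F$ is either $(tr_{\mathbf 1}(A)tr_{\mathbf 1}(B))^\bullet$ (the old case, giving $(\cdot\to)$) or $(\Lambda)^\bullet$ (the new case: replacing the edge by a single node deletes a $tr_{\mathbf 1}(\mathbf 1)$-edge from the string graph, so by the induction hypothesis the premise is $tr_{\mathbf 1}(\Gamma,\Delta\to C)$ and the conclusion is $tr_{\mathbf 1}(\Gamma,\mathbf 1,\Delta\to C)$, modeled by $(\mathbf 1\to)$). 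The main obstacle is precisely this faithfulness direction: I must argue that no ``pathological'' graph $G$ can arise, i.e.\ that the loop-carrying $tr_{\mathbf 1}(\mathbf 1)$-edges can only attach to single points of an otherwise linear structure, so that at every rule the antecedent remains in the image of $tr_{\mathbf 1}$ on genuine string sequents; this requires a careful case analysis of how a $\times((\Lambda)^\bullet)$-edge interacts with the replacement operations in $(\div\to)$, $(\to\div)$, $(\times\to)$, and in particular that collapsing or introducing it never destroys the string-graph shape.
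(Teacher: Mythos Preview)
Your forward direction matches the paper. The gap is in your $(\Leftarrow)$ direction: the strengthened induction hypothesis you propose---that every derivable $G\to T$ over $tr_{\mathbf 1}(Tp_{\mathbf 1})$ has $G$ a string graph in the image of $tr_{\mathbf 1}$---is simply false in the loops-allowed setting. The paper flags this immediately after stating the theorem: ``This theorem is weaker than previous embedding theorems \ldots\ Generally, if a derivable graph sequent $G\to A$ is over the set $tp_{\mathbf{1}}(Tp_{\mathbf{1}}^\ast)$, then $G$ is not necessarily a string graph: it may contain loops.''

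A concrete counterexample: take $G$ with nodes $v_0,v_1$, external string $v_0v_1$, one $p$-edge from $v_0$ to $v_1$, and one $tr_{\mathbf 1}(\mathbf 1)$-labeled \emph{loop} at $v_0$ (attachment $v_0v_0$). Applying $(\times\to)$ to the loop yields the premise $G[e/(\Lambda)^\bullet]=p^\bullet$, so $\mathrm{HL}\vdash G\to p$. Yet $G$ is not $tr_{\mathbf 1}(\Gamma)^\bullet$ for any $\Gamma$, since string graphs (Definition~\ref{def_str_gr}) never carry loop edges. Your analysis of the $(\times\to)$ case silently assumes the $tr_{\mathbf 1}(\mathbf 1)$-edge in the conclusion sits between two \emph{distinct} consecutive nodes on the path; precisely this assumption fails, and it is exactly the ``pathological'' case you worried about at the end but cannot exclude.

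The paper sidesteps this by not strengthening at all: it proves only the equivalence for sequents already of the form $tr_{\mathbf 1}(\Gamma\to C)$, and in the backward induction observes that from such a conclusion every rule's premises are again of this form (in particular, replacing a \emph{non-loop} $tr_{\mathbf 1}(\mathbf 1)$-edge in a string graph by $(\Lambda)^\bullet$ merges two adjacent nodes and returns a shorter string graph). Your argument is easily repaired by dropping the strengthening and restricting the induction to sequents already known to equal $tr_{\mathbf 1}(\cdot)$; then the loop case never arises.
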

This theorem is weaker than previous embedding theorems: we restrict our consideration to string graphs only. Generally, if a derivable graph sequent $G\to A$ is over the set $tp_{\mathbf{1}}(Tp_{\mathbf{1}}^\ast)$, then $G$ is not necessarily a string graph: it may contain loops.
\subsection{Embedding of $\mathrm{L}^\Rev$}
The Lambek calculus with the reversal operation (studied e.g. in \cite{Kuznetsov12}) is obtained from $\mathrm{L}$ by adding a unary connective $^\Rev$ and the following rules (where $(A_1,\dots,A_n)^\Rev=A_n^\Rev,\dots,A_1^\Rev$):
$$
\infer[(\,^\Rev\to\,^\Rev)]{\Gamma^\Rev\to C^\Rev}{\Gamma\to C}
\qquad
\infer[(\,^{\Rev\Rev}\to)]{\Gamma,A,\Delta\to C}{\Gamma,A^{\Rev\Rev},\Delta\to C}
\qquad
\infer[(\to\,^{\Rev\Rev})]{\Pi\to C}{\Pi\to C^{\Rev\Rev}}
$$
The cut rule is also included in this calculus (see Section \ref{sec_cut}).

For the set $Tp(\mathrm{L}^\Rev)$ of types in $\mathrm{L}^\Rev$ we introduce the function $tr_{\Rev}$. Its inductive definition coincides with that of $\mathrm{L}$ from Section \ref{sec_embed_lambek} on primitive types and on types of the form $B\backslash A, A\cdot B, A/B$. We extend it to $^\Rev$ as follows:
$$
tr_\Rev(A^\Rev)\eqdef \times\left(\mbox{
	{\tikz[baseline=.1ex]{
			\node[] (R1) {};
			\node[node,above=-2mm of R1, label=left:{\scriptsize $(1)$}] (N1) {};
			\node[node,right=14mm of N1, label=right:{\scriptsize $(2)$}] (N2) {};
			\draw[>=stealth,->,black] (N2) -- node[above] {$tr_\Rev(A)$} (N1);
	}}
}\right)
$$
As usually, $tr_\Rev(\Gamma\to A)\eqdef tr_\Rev(\Gamma)^\bullet\to tr_\Rev(A)$. 
\begin{theorem}\label{embed_lr}\leavevmode
	Let $\Gamma\to C$ be over $\mathrm{L}^\Rev$. Then $\mathrm{L}^\Rev\vdash \Gamma\to C\Leftrightarrow\mathrm{HL}\vdash tr_\Rev(\Gamma\to C)$.
\end{theorem}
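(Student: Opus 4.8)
The plan is to prove both implications by induction, isolating the genuinely new content in the behaviour of $tr_\Rev$ on $^\Rev$ and on the three rules $(\,^\Rev\to\,^\Rev)$, $(\,^{\Rev\Rev}\to)$, $(\to\,^{\Rev\Rev})$: the cases for $\backslash,/,\cdot$ are copied verbatim from the proof of Theorem~\ref{embed_lambek}, and cut (primitive in $\mathrm{L}^\Rev$) is absorbed by cut elimination in $\mathrm{HL}$ from Section~\ref{sec_cut}. Two auxiliary facts about $\mathrm{HL}$ carry everything. The first is the identity lemma $\mathrm{HL}\vdash T^\bullet\to T$ for every type $T$, proved — like its counterpart $A\to A$ in $\mathrm{L}$ — by induction on $T$, using one $(\to\div)$ on top of one $(\div\to)$ in the division case and one $(\to\times)$ on top of one $(\times\to)$ in the product case, the smaller premises being instances of the lemma itself. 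The second is bookkeeping about reversal: write $X^\dagger$ for the type $\times$ of the two-node graph carrying one $X$-labelled edge directed from external node $2$ to external node $1$, so that $tr_\Rev(A^\Rev)=(tr_\Rev(A))^\dagger$; then one application of $(\times\to)$ rewrites a $X^\dagger$-labelled antecedent edge as an $X$-labelled edge with its two attachment nodes interchanged, and dually, from $\mathrm{HL}\vdash H'\to X$ one application of $(\to\times)$ derives $H\to X^\dagger$, where $H$ is $H'$ with its two external nodes swapped.

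For the forward implication I induct on a derivation in $\mathrm{L}^\Rev$. The rules $(\,^{\Rev\Rev}\to)$ and $(\to\,^{\Rev\Rev})$ amount to showing that $(tr_\Rev(A))^{\dagger\dagger}$ and $tr_\Rev(A)$ are interchangeable inside a sequent: two applications of $(\times\to)$ (dually $(\to\times)$) take $((tr_\Rev(A))^{\dagger\dagger})^\bullet$ to $(tr_\Rev(A))^\bullet$ — the two interchanges of external nodes cancelling — and the identity lemma closes the derivation; a cut in $\mathrm{HL}$ with the resulting sequent then substitutes one label for the other at the relevant edge, or in the succedent, of the sequent given by the induction hypothesis. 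The case $(\,^\Rev\to\,^\Rev)$, which derives $\Gamma^\Rev\to C^\Rev$ from $\Gamma\to C$, is the heart of the proof and needs no cut. Write $\Gamma=A_1,\dots,A_n$ and $X_i=tr_\Rev(A_i)$, so that $tr_\Rev(\Gamma^\Rev)^\bullet$ is the string graph whose consecutive edges carry $X_n^\dagger,\dots,X_1^\dagger$ and $tr_\Rev(C^\Rev)=(tr_\Rev(C))^\dagger$. Starting from the induction hypothesis $\mathrm{HL}\vdash tr_\Rev(\Gamma)^\bullet\to tr_\Rev(C)$, apply $(\times\to)$ once to each antecedent edge: the $X_i$-edge is rewritten as an $X_i^\dagger$-edge with endpoints swapped, i.e.\ with its traversal direction reversed. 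After these $n$ steps the antecedent is precisely $tr_\Rev(\Gamma^\Rev)^\bullet$ with its two external nodes interchanged; a single application of $(\to\times)$ with the one-edge graph underlying $(tr_\Rev(C))^\dagger$ then simultaneously turns the succedent into $(tr_\Rev(C))^\dagger=tr_\Rev(C^\Rev)$ and restores the external nodes to the correct order, yielding $\mathrm{HL}\vdash tr_\Rev(\Gamma^\Rev\to C^\Rev)$.

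For the converse I prove, by induction on a cut-free $\mathrm{HL}$-derivation, the following strengthening: every derivable graph sequent over $tr_\Rev(Tp(\mathrm{L}^\Rev))$ has the form $H\to tr_\Rev(C)$, where $H$ is a \emph{generalised string graph} — a graph obtained from some string graph by possibly reversing the orientations of some edges and possibly swapping its two external nodes — and, decoding $H$ along the path from external node $1$ to external node $2$ by reading a forward edge labelled $tr_\Rev(B)$ as $B$ and a backward one as $B^\Rev$, the resulting $\mathrm{L}^\Rev$-sequent is derivable. This extra generality is forced: unwrapping a $tr_\Rev(A^\Rev)$-edge via $(\times\to)$ replaces it by a backward-pointing $tr_\Rev(A)$-edge, and the readings $A^\Rev$ (forward $tr_\Rev(A^\Rev)$) and $A^\Rev$ (backward $tr_\Rev(A)$) coincide, so such a step is invisible on the $\mathrm{L}^\Rev$ side. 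The remaining rules are matched with their $\mathrm{L}^\Rev$ counterparts — $(\div\to)$ with a $tr_\Rev(A/B)$- or $tr_\Rev(B\backslash A)$-type exactly as in Theorem~\ref{embed_lambek}, $(\times\to)$ on a $tr_\Rev(A\cdot B)$-edge with $(\cdot\to)$, and so on — inserting applications of $(\,^\Rev\to\,^\Rev)$, $(\,^{\Rev\Rev}\to)$, $(\to\,^{\Rev\Rev})$ and cut to reconcile orientations and double reversals (for instance a backward $tr_\Rev(A\cdot B)$-edge, read as $(A\cdot B)^\Rev$, is converted to $B^\Rev\cdot A^\Rev$ using these rules). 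Applying the strengthened statement to the given sequent $tr_\Rev(\Gamma\to C)$, whose antecedent is a genuine string graph so that every edge decodes forwards, gives $\mathrm{L}^\Rev\vdash\Gamma\to C$.

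The main obstacle is the converse direction: choosing the right induction invariant and checking that each $\mathrm{HL}$-rule preserves the ``generalised string graph plus orientation-sensitive decoding'' correspondence. One must keep careful track of which antecedent edges point backwards, distinguish $tr_\Rev(A^\Rev)$-edges (which unwrap to a single backward edge) from $tr_\Rev(A\cdot B)$-edges and from primitive edges, and make sure the $\mathrm{L}^\Rev$-derivation being built remains valid under all the reversal and double-reversal adjustments. By contrast the forward direction, including the pivotal $(\,^\Rev\to\,^\Rev)$ case, reduces to a short and completely explicit manipulation with $(\times\to)$ and $(\to\times)$.
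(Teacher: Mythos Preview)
Your proposal is correct and follows essentially the same route as the paper's proof: the paper likewise proves the forward direction by induction on $\mathrm{L}^\Rev$-derivations (handling $(\,^\Rev\to\,^\Rev)$ via one $(\to\times)$ together with repeated $(\times\to)$/reversibility to pass between backward $tr_\Rev(A_i)$-edges and forward $tr_\Rev(A_i^\Rev)$-edges, and handling $(\,^{\Rev\Rev}\to)$, $(\to\,^{\Rev\Rev})$, and cut via the identity lemma and cut elimination), and proves the converse by strengthening to arbitrary derivable sequents over $tr_\Rev(Tp(\mathrm{L}^\Rev))$, showing their antecedents are exactly your ``generalised string graphs'' and recovering the $\mathrm{L}^\Rev$-derivation by a case analysis on the last rule with orientation-dependent subcases. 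Your decoding function corresponds to the paper's set-valued translation $tr'_\Rev$, and the ``possibly swapping external nodes'' clause in your definition is harmless but redundant (such a swap is already an instance of reversing all edges and re-indexing).
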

\section{Properties of $\mathrm{HL}$}\label{sec_prop}
We start with
\begin{proposition}\label{TtoT}
	$\mathrm{HL}\vdash T^\bullet\to T$ for all types $T$.
\end{proposition}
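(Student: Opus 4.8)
The plan is to prove the statement by structural induction on the type $T$, exactly mirroring the standard fact that $\mathrm{L}\vdash A\to A$ for every $A$ is obtained from the atomic axiom $p\to p$ by induction. The base case $T=p\in Pr$ is immediate: $p^\bullet\to p$ is precisely the axiom of $\mathrm{HL}$. For the induction step there are two cases, one for $\div$ and one for $\times$, and in each of them the strategy is the same: peel off the outermost connective with the corresponding ``$\to$''-rule, reduce to a handle-filling instance of the matching ``$\to$''-introduction on the left (for $\div$) or right (for $\times$), and plug in the induction hypothesis for each argument type via its own handle.

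For $T=N\div D$ with $E_D=\{d_0,d_1,\dots,d_k\}$, $lab(d_0)=\$$ and $lab(d_i)$ a type for $i\ge 1$, note first that $N$ and each $lab(d_i)$ are structurally smaller than $T$, so the induction hypothesis applies to them. I would apply $(\to\div)$ to reduce the goal $(N\div D)^\bullet\to N\div D$ to the goal $D[d_0/(N\div D)^\bullet]\to N$, observing that replacing the $\$$-edge $d_0$ by the handle $(N\div D)^\bullet$ (which has the right type, namely $type_D(d_0)$) merely relabels $d_0$, i.e. $D[d_0/(N\div D)^\bullet]=D[d_0\eqdef N\div D]$. To derive the latter I would apply $(\div\to)$ with $H=N^\bullet$ and its unique edge $e$, with $A=N$ (so the main premise $N^\bullet\to N$ is given by the induction hypothesis), and with $H_i=lab(d_i)^\bullet$ for $1\le i\le k$ (each side premise $lab(d_i)^\bullet\to lab(d_i)$ given by the induction hypothesis). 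Since $N^\bullet[e/D]=D$ and since substituting each handle $lab(d_i)^\bullet$ back for $d_i$ changes nothing, the conclusion of this instance of $(\div\to)$ is exactly $D[d_0\eqdef N\div D]\to N$, as needed; if $k=0$ there are simply no side premises.

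For $T=\times(M)$ with $E_M=\{m_1,\dots,m_l\}$, each $lab(m_i)$ is again a smaller type. I would apply $(\times\to)$ to reduce the goal $(\times(M))^\bullet\to\times(M)$ to $M\to\times(M)$, since replacing the single edge of the handle $(\times(M))^\bullet$ by $M$ yields $M$. Then $M\to\times(M)$ follows by one application of $(\to\times)$ with premises $H_i=lab(m_i)^\bullet$, each derivable by the induction hypothesis, using that $M[m_1/lab(m_1)^\bullet,\dots,m_l/lab(m_l)^\bullet]=M$; when $l=0$ this is exactly the zero-premise instance of $(\to\times)$ pointed out in the remark following that rule.

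The only genuine work is the \emph{bookkeeping} about hyperedge replacement that underlies all of the above: that for a handle $a^\bullet$ and any graph $G$ with $type(G)=type(a)$ one has $a^\bullet[e/G]=G$ (up to the isomorphism under which we identify graphs), and dually that replacing an edge $d$ of a graph by the handle $lab(d)^\bullet$ leaves the graph unchanged. Both facts are straightforward from the definition of replacement in Section~\ref{ssec_repl} (remove the edge, glue in an isomorphic copy, fuse $i$-th external with $i$-th attachment node), and they are what guarantee that the displayed rule instances really have the claimed conclusions. Apart from this, the only points requiring explicit care are the degenerate cases $k=0$ and $l=0$, where the respective rule has no side premises. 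I expect this replacement bookkeeping to be the main (though routine) obstacle; everything else is a direct assembly of the four rules.
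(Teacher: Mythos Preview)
Your proposal is correct and follows essentially the same approach as the paper: induction on $T$, with the $\div$ case handled by $(\to\div)$ followed by $(\div\to)$ and the $\times$ case by $(\times\to)$ followed by $(\to\times)$, using the induction hypothesis on the immediate subtypes. The only difference is cosmetic---you spell out the handle-replacement bookkeeping and the degenerate cases $k=0$, $l=0$, which the paper leaves implicit.
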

\begin{proof}
	Induction on size of $T$. If $T$ is primitive, then $T^\bullet\to T$ is an axiom.
	\\
	If $T=N\div D$ and $E_D=\{e_0,\dots,e_k\}$ where $lab(e_0)=\$$, then
	$$
	\infer[(\to\div)]{(N\div D)^\bullet\to N\div D}{
		\infer[(\div\to)]{D[e_0:=N\div D]\to N}{N^\bullet\to N & lab(e_1)^\bullet\to lab(e_1) & \dots & lab(e_k)^\bullet\to lab(e_k)}
	}
	$$
	All the above sequents are derivable by induction hypothesis.
	\\
	If $T=\times(M)$ and $E_M=\{e_1,\dots,e_l\}$, then
	$$
	\infer[(\times\to)]{(\times(M))^\bullet\to \times(M)}{
		\infer[(\to\times)]{M\to \times(M)}{lab(e_1)^\bullet\to lab(e_1) & \dots & lab(e_l)^\bullet\to lab(e_l)}
	}
	$$
	Again, we apply induction hypothesis.
\end{proof}
Therefore, the axiom $p^\bullet\to p$ where primitive types are considered can be replaced by more general one $T^\bullet\to T$ for all types; this does not change the set of derivable sequents.

\subsection{The cut elimination}\label{sec_cut}
One of fundamental properties of $\mathrm{L}$ is admissibility of the following cut rule:
$$
\infer[(\mathrm{cut})]{\Gamma\;\Pi\;\Delta\to B}{\Pi\to A & \Gamma\;A\;\Delta\to B}
$$
For $\mathrm{L}$ this was proved by Lambek in \cite{Lambek58}. This rule can be naturally extended to $\mathrm{HL}$ as follows. Let $H\to A, G\to B$ be graph sequents, $e_0\in E_G$ be an edge, and $lab(e_0)$ be equal to $A$. Then
$$
\infer[(\mathrm{cut})]{G[e_0/H]\to B}{H\to A & G\to B}
$$
\begin{theorem}[cut elimination]\label{cut}
	If $F\to B$ is derivable in $\mathrm{HL}$ enriched with $(\mathrm{cut})$, then it is derivable in $\mathrm{HL}$.
\end{theorem}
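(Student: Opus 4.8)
The plan is to follow the classical Gentzen strategy: prove cut elimination by a double induction, the outer induction on the complexity of the cut formula $A$ and the inner induction on the sum of the sizes of the derivations of the two premises $H\to A$ and $G\to B$. I would first show that it suffices to eliminate a single, topmost application of $(\mathrm{cut})$ whose two premises are cut-free; then by iterating from the leaves downward one removes all cuts. So fix such a cut, with left premise $\pi_1$ deriving $H\to A$, right premise $\pi_2$ deriving $G\to B$, and $e_0\in E_G$ the edge labelled $A$ that is being cut.

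\medskip
\noindent\textbf{Case analysis.} The argument splits on the last rules of $\pi_1$ and $\pi_2$. First, the \emph{commutation} (or principal-vs-context) cases: if the last rule of $\pi_1$ does not have $A$ as its principal type, or the last rule of $\pi_2$ does not act on the edge $e_0$, then the cut can be permuted upward past that rule, producing one or more cuts on the same formula $A$ but with strictly smaller derivation size, so the inner induction hypothesis applies. Here one must be a little careful, because the rules $(\div\to)$, $(\times\to)$, $(\to\div)$, $(\to\times)$ manipulate edges via hyperedge-replacement rather than by splitting a linear context; the key observation, which I would isolate as a lemma, is that replacement is ``local'' and associative/commutative (the sequentialization and parallelization properties from Section~\ref{ssec_repl}), so substituting $H$ for $e_0$ commutes with substituting the graphs introduced by the last rule of $\pi_2$ into their respective edges, as long as those edges are distinct from $e_0$. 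When $e_0$ happens to be one of the edges introduced or consumed by that rule one is in a principal case instead. Second, the \emph{principal} cases: $A$ is the principal type of the last rule of $\pi_1$ \emph{and} $e_0$ is the edge on which the last rule of $\pi_2$ acts. Since $A$ is either of the form $N\div D$ or $\times(M)$, this means the last rule of $\pi_1$ is $(\to\div)$ or $(\to\times)$, and the last rule of $\pi_2$ is correspondingly $(\div\to)$ or $(\times\to)$. In the $\div$ case one has $\pi_1$ ending in $D[e_0^D/H']\to N$ (where $e_0^D$ is the $\$$-edge of $D$) and $\pi_2$ ending by replacing $e_0$ with $D$, relabelling the $\$$-edge to $N\div D$ and filling the remaining edges $d_1,\dots,d_k$ with antecedents $H_1,\dots,H_k$ proved from $H_i\to lab(d_i)$; the resulting net effect of the two rules plus the cut is reproduced by several cuts, each on one of the strictly smaller types $N$ and $lab(d_1),\dots,lab(d_k)$, so the outer induction hypothesis applies. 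The $\times$ case is analogous and simpler: $\pi_1$ ends in $(\to\times)$ from $H_1\to lab(m_1),\dots,H_l\to lab(m_l)$ assembled via $M$, and $\pi_2$ ends in $(\times\to)$ replacing $e_0$ by $M$; the combined inference is replaced by $l$ cuts on the types $lab(m_i)$, each smaller than $\times(M)$.

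\medskip
\noindent\textbf{Main obstacle.} I expect the principal $\div$ case to be the delicate point, because there the single cut must be broken into a cascade of cuts whose targets are edges sitting inside the graph $D$ after it has been substituted into $G$, and one must verify that the graph obtained by first doing all the ``inner'' cuts (on $N$ and on the $d_i$) and then performing the outer replacement is \emph{literally the same hypergraph} as the conclusion $G[e_0/H]\to B$ of the original cut — this is again a bookkeeping exercise in the sequentialization/parallelization properties, but with the twist that one of the substitutions is the relabelling $d_0:=N\div D$ and another is the cut-substitution, and these interact with $ext$ and $att$. A secondary subtlety is the base/degenerate cases: when $D$ has no edges other than the $\$$-edge (so $k=0$), or when $M$ has no edges (so $\times(M)$ behaves like an axiom), the cascade of cuts degenerates appropriately and one should check that nothing goes wrong. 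Apart from these, the proof is a faithful, if notation-heavy, transcription of Lambek's original cut-elimination argument, with ``concatenation of strings'' replaced throughout by ``hyperedge replacement''.
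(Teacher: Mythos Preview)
Your proposal is correct and follows the same approach as the paper: the same partition into axiom cases, commutation cases (the paper's Cases~3--4), and principal cases (Cases~5--6), with the graph bookkeeping handled by associativity/sequentialization of hyperedge replacement exactly as you anticipate. The only difference is cosmetic: the paper packages the argument as a single induction on $|H\to A|+|G\to B|$ (total type complexity of the two sequents) rather than your classical double induction on cut-formula rank and derivation size, but the resulting case analysis and the cascade of smaller cuts in the principal $\div$ and $\times$ cases are identical.
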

This theorem directly implies
\begin{proposition}\label{prop_reversibility}\leavevmode
	\begin{enumerate}
		\item If $\mathrm{HL}\vdash H\to C$ and $e_0\in E_H$ is labeled by $\times(M)$, then $\mathrm{HL}\vdash H[e_0/M]\to C$.
		\item If $\mathrm{HL}\vdash H\to N\div D$ and $e_0\in E_D$ is labeled by $\$$, then $\mathrm{HL}\vdash D[e_0/H]\to N$.
	\end{enumerate}
\end{proposition}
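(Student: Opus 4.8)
The plan is to obtain both statements as immediate consequences of the cut elimination theorem (Theorem~\ref{cut}): once we know that $(\mathrm{cut})$ is admissible, we may use it freely when building derivations, and each of the two claims follows by cutting the given derivable sequent against a suitable ``identity-like'' sequent that is derivable in a single step from the sequents $T^\bullet\to T$ of Proposition~\ref{TtoT}. So the work is to identify the right auxiliary sequent in each case and check that the cut is applicable; Theorem~\ref{cut} then removes the cut from the resulting derivation, leaving a derivation in plain $\mathrm{HL}$.

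For part (1), I would first record that $\mathrm{HL}\vdash M\to\times(M)$ for every graph $M$ labeled by types: with $E_M=\{m_1,\dots,m_l\}$, this is exactly one application of $(\to\times)$ to the premises $lab(m_i)^\bullet\to lab(m_i)$, each derivable by Proposition~\ref{TtoT} (this is precisely the subderivation already used in that proof). Now suppose $\mathrm{HL}\vdash H\to C$ with $e_0\in E_H$ labeled by $\times(M)$. Since $type(M)=type(\times(M))=type_H(e_0)$, the sequent $M\to\times(M)$ is a graph sequent whose succedent matches $lab(e_0)$, so $(\mathrm{cut})$ applies to $M\to\times(M)$ and $H\to C$ at the edge $e_0$ and produces $H[e_0/M]\to C$. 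By Theorem~\ref{cut} this sequent is derivable in $\mathrm{HL}$.

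For part (2), the analogous auxiliary sequent is $D[e_0\eqdef N\div D]\to N$, again the subderivation appearing in the proof of Proposition~\ref{TtoT}: apply $(\div\to)$ with $N^\bullet\to N$ playing the role of the main premise (its unique edge being the one labeled $N$) and with the handles $lab(d_i)^\bullet\to lab(d_i)$ as the remaining premises, using that $N^\bullet[e/D]\cong D$ and that replacing each $d_i$ by its own handle changes nothing; all premises are derivable by Proposition~\ref{TtoT}. Given $\mathrm{HL}\vdash H\to N\div D$, the edge $e_0$ of $D[e_0\eqdef N\div D]$ is labeled by $N\div D$ and $type(H)=type(N\div D)=type_{D[e_0\eqdef N\div D]}(e_0)$, so cutting $H\to N\div D$ into $D[e_0\eqdef N\div D]\to N$ at $e_0$ yields $D[e_0\eqdef N\div D][e_0/H]\to N$; since relabelling an edge and then replacing it coincides with replacing it directly, this is $D[e_0/H]\to N$, and Theorem~\ref{cut} again eliminates the cut.

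The argument is short, and I do not expect a genuine obstacle: the entire weight of the proof lies in Theorem~\ref{cut}. The only point requiring care is the purely combinatorial bookkeeping on hypergraph operations — confirming $N^\bullet[e/D]\cong D$, that $G[e/lab(e)^\bullet]\cong G$, and that $(G[e\eqdef a])[e/H]=G[e/H]$ when the types match — together with checking the $type$-equalities that make $(\mathrm{cut})$ applicable in each case. (One could alternatively prove both statements directly by induction on the derivation of the premise, as an invertibility argument, but the cut-based route is cleaner and is what the placement of the proposition suggests.)
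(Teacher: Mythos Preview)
Your proposal is correct and follows essentially the same approach as the paper: both parts are obtained by cutting the given derivable sequent against the auxiliary sequents $M\to\times(M)$ and $D[e_0\eqdef N\div D]\to N$, respectively, and then invoking Theorem~\ref{cut}. Your write-up is in fact more careful than the paper's, which simply calls the derivability of these auxiliary sequents ``trivial'' without spelling out the combinatorial bookkeeping you mention.
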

\begin{proof}\leavevmode
	\begin{enumerate}
		\item Use the cut rule as below:
		$$
		\infer[(\mathrm{cut})]{H[e_0/M]\to C}{M\to \times(M) & H\to C}
		$$
		Derivability of $M\to\times(M)$ is trivial.
		\item Use the cut rule as below:
		$$
		\infer[(\mathrm{cut})]{D[e_0/H]\to N}{H\to N\div D & D[e_0\eqdef N\div D]\to N}
		$$
		Derivability of $D[e_0\eqdef N\div D]\to N$ is trivial as well (using $(\div\to)$ we come up with premises $N^\bullet\to N$ and $lab_D(e)^\bullet\to lab_D(e),\; e\ne e_0$).
	\end{enumerate}
\end{proof}
\subsection{Wolf lemma}
\begin{definition}
	Let $A$ be a type, and let $B$ be its distinguished subtype. We say that $B$ is a top occurrence within $A$ if one of the following holds:
	\begin{enumerate}
		\item $A=B$;
		\item $A=\times(M)$ and $\exists e_0\in E_M$ such that $B$ is a top occurrence within $lab(e_0)$;
		\item $A=N\div D$ and $B$ is a top occurrence within $N$. 
	\end{enumerate}
\end{definition}
\begin{example}
	In Example \ref{ex_types} $q$ and $p$ are top occurrences within $A_4$, and $r$ and $s$ are not.
\end{example}
\begin{definition}\label{def_lonely}
	A primitive type $p$ is said to be lonely in a type $A$ if for each top occurrence of $p$ within $A$ there is a subtype $\times(M)$ of $A$ such that $|E_M|\ge 2$ and for some $e_0\in E_M$ $lab(e_0)=p$ is that top occurrence.
\end{definition}
\begin{example}
	In Example \ref{ex_types} $p,s,r$ are lonely in $A_4$, and $q$ is not.
\end{example}
\begin{definition}
	A type $A$ is called skeleton if $A=\times(M)$, $E_M=\emptyset$ and $|ext_M|=|V_M|$.
\end{definition}
\begin{lemma}[wolf lemma]\label{wolf1}
	Let $p\in Pr$ be lonely in $\times(H)$ and let $\times(H)$ not contain skeleton subtypes. Then $\mathrm{HL}\not\vdash H\to p$.
\end{lemma}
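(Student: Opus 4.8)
The plan is to prove the contrapositive in a structural way: assuming $\mathrm{HL}\vdash H\to p$ with $p$ lonely in $\times(H)$, I will analyze a cut-free derivation (available by Theorem~\ref{cut}) and derive a contradiction by tracking the primitive type $p$ through the derivation. The key observation is that the succedent of the endsequent is the \emph{primitive} type $p$, so the very last rule applied must be one that leaves a primitive succedent: it cannot be $(\to\div)$ or $(\to\times)$ (these produce a $\div$- or $\times$-headed succedent), so it is either an axiom $p^\bullet\to p$, or one of the left rules $(\div\to)$, $(\times\to)$, all of which keep the succedent untouched. I would like to induct on the size of the cut-free derivation, strengthening the statement so that the induction goes through.

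First I would set up the right invariant. The natural generalization is: \emph{if $\mathrm{HL}\vdash H\to p$ (cut-free) where $p\in Pr$, then $H$ must contain a ``witnessing'' edge labeled by a type $A$ in which $p$ occurs as a top occurrence that is \emph{not} lonely in $A$}, or $H=p^\bullet$. Equivalently, some top occurrence of $p$ inside an antecedent label survives all the way up to an axiom $p^\bullet\to p$ without ever being ``split off'' inside a $\times(M)$ with $|E_M|\ge 2$. I would then check the three possible last rules. For the axiom $p^\bullet\to p$ the claim is immediate. For $(\times\to)$, which replaces an edge $e$ labeled $\times(F)$ by the graph $F$: by the induction hypothesis the premise $H[e/F]\to p$ has a witnessing top occurrence of $p$; if it lies in $F$, then together with the $\times(F)$-edge it still gives a top occurrence of $p$ in $\times(F)$, and I must argue it is not lonely there — this is where the structure of $F$ matters. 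For $(\div\to)$, which introduces an edge labeled $N\div D$ with $lab(e)=N$ replaced: a top occurrence of $p$ in an antecedent label of the premise either sits in one of the $H_i$ (hence in an antecedent label of the conclusion, unchanged) or sits in $N$, hence becomes a top occurrence of $p$ in $N\div D$, a label in the conclusion; in both cases loneliness is preserved or fails exactly as needed.

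The main obstacle, as usual with these ``polarity of a lonely atom'' arguments, is the $(\times\to)$ case combined with the definition of \emph{top occurrence within} a $\times$-type: when an edge labeled $\times(F)$ gets expanded, a top occurrence of $p$ appearing on an edge $e_0\in E_F$ witnesses loneliness \emph{in} $\times(F)$ only if $|E_F|\ge 2$. So I must make the invariant precise enough that the witnessing occurrence of $p$ is always found on an edge sitting inside a $\times(\cdot)$ with \emph{at least two} edges precisely when loneliness would demand it, and on a ``stand-alone'' edge (or in the numerator of a division, or as the whole antecedent $p^\bullet$) otherwise. Getting this bookkeeping to close under both left rules — and verifying that $(\to\div)$ and $(\to\times)$ genuinely cannot be the last rule because they force a non-primitive succedent — is the crux. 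Once the strengthened statement is established, the lemma follows: if $p$ were lonely in $\times(H)$ and $\times(H)$ had no skeleton subtypes (so that $H$ is not itself problematic in degenerate ways, e.g. $H$ cannot be of the shape that makes $H=p^\bullet$ while $p$ is lonely), the invariant would be violated, contradicting derivability of $H\to p$.
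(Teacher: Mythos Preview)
Your plan is sound and would yield a correct proof once the bookkeeping is carried out, but the paper takes a considerably shorter route that sidesteps the induction you set up.

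The paper argues as follows. In any derivation of $H\to p$ the succedent $p$ is primitive, so the branch that preserves the succedent must terminate in the axiom $p^\bullet\to p$. Now inspect only the \emph{single} rule applied immediately below this axiom. If it is $(\div\to)$, the unique edge of $p^\bullet$ plays the role of $e$, forcing $N=p$; the new antecedent contains an edge labeled $p\div D$. If it is $(\times\to)$, then $p^\bullet=P[e/F]$ with $lab(e)=\times(F)$; since $p^\bullet$ has exactly one edge, either $F$ contains that edge (so $|E_F|=1$) or $F$ is edgeless, and then $\times(F)$ is a skeleton subtype. In the non-skeleton cases one uses the (easy, implicit) fact that any antecedent label appearing along this branch is a top-occurrence subtype of some label of the final $H$: hence $p\div D$, respectively $\times(F)$ with $|E_F|=1$, sits as a top occurrence in $\times(H)$, and the copy of $p$ inside it (as numerator, respectively as the unique edge label of $F$) is a top occurrence of $p$ that is not of the form $lab(e_0)$ for any $e_0$ in a $\times(M)$ with $|E_M|\ge 2$. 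So $p$ is not lonely in $\times(H)$.

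Compared with your plan, the paper trades your inductive invariant for a one-shot case split at the step just below the axiom, relying on the observation that antecedent labels only get wrapped further (in $\div$-numerators or inside $\times$) as one moves toward the endsequent. That observation is precisely the ``subformula for top occurrences'' fact your induction would prove in full; the paper uses only the instance it needs.

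One concrete imprecision worth flagging: your first invariant, ``some label $A$ of $H$ contains a top occurrence of $p$ that is not lonely \emph{in $A$}'', is too weak as stated. If $A=p$ (a bare primitive label on an edge of $H$) then $p$ is trivially not lonely in $A$, yet this very occurrence \emph{can} satisfy the loneliness condition in $\times(H)$ whenever $|E_H|\ge 2$ (take $M=H$). Your second phrasing, ``never split off inside a $\times(M)$ with $|E_M|\ge 2$'', is the correct one, but that is a statement about the occurrence inside $\times(H)$, not inside an individual label $A$. The skeleton hypothesis enters exactly in the $(\times\to)$ sub-case where $F$ is edgeless; your invariant should carry the disjunct ``or $\times(H)$ contains a skeleton subtype'' to absorb it.
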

\begin{proof}
	The proof is ex falso: assume that $\mathrm{HL}\vdash H\to p$. If a derivation contains an axiom only, then $H=p^\bullet$, which contradicts loneliness of $\times(H)$.
	\\
	Let a derivation include more than one step. There has to be an axiom of the form $p^\bullet\to p$ in this derivation where $p$ is the same as the succedent in $H\to p$. Now it suffices to notice that, however, no rule can be infered to $p^\bullet\to p$ in this derivation.
	\\
	Indeed, if $(\div\to)$ is applied to it, then this step is of the form (notation is like in Section \ref{sec_axioms_rules})
	$$
	\infer[(\div\to)]{G\to p}{p^\bullet\to p & H_1\to lab(d_1) & \dots & H_k\to lab(d_k)}
	$$
	where $G=p^\bullet[e/D][d_0\eqdef N\div D][d_1/H_1,\dots,d_k/H_k]$ and $e$ is the only edge of $p^\bullet$ labeled by $N$. This implies that $N$ has to equal $p$. Consequently, $\times(H)$ contains $p\div D$ as a top occurrence, which contradicts that $p$ is lonely in $\times(H)$. Therefore, this is impossible.
	\\
	Let $(\times\to)$ be infered to $p^\bullet\to p$. Then this step can be presented in the form
	$$
	\infer[(\times\to)]{P\to p}{p^\bullet\to p}
	$$
	Here $P$ is such a graph that $p^\bullet=P[e/F]$ for $e\in E_P$ being labeled by $\times(F)$. If $F$ contains the $p$-labeled edge, then $p$ is not lonely since $\times(F)$ is a top occurrence within $\times(H)$. The remaining option is that $F$ does not contain edges. Note that all nodes in $F$ in such a case have to be external since otherwise isolated node would appear in the premise. Thus, $\times(F)$ is skeleton which contradicts the assumption of the lemma.	
\end{proof}
We will use its corollary:
\begin{corollary}\label{wolf}
	Let $\mathcal{T}$ be a set of types such that for each $T\in\mathcal{T}$ $T$ does not have skeleton subtypes and $p$ is lonely in $T$. Let $\mathrm{HL}\vdash H\to p$ for $H\in\mathcal{H}(\mathcal{T}\cup \{p\})$. Then $H=p^\bullet$.
\end{corollary}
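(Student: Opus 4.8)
The plan is to argue by contradiction, showing that a derivation of $H\to p$ cannot escape Lemma~\ref{wolf1} except in the one trivial way, namely $H=p^\bullet$. The key observation is that $\times(H)$ is a genuine type: all its edges carry labels from $\mathcal{T}\cup\{p\}\subseteq Tp(\mathrm{HL})$, and $type(\times(H))=type(H)=type(p)$, so $H\to p$ is a legitimate sequent and Lemma~\ref{wolf1} applies to it as soon as we know that $\times(H)$ has no skeleton subtype and that $p$ is lonely in $\times(H)$. So the whole argument reduces to analysing what goes wrong with these two conditions when $\mathrm{HL}\vdash H\to p$.

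First I would dispose of skeleton subtypes. A subtype of $\times(H)$ is either $\times(H)$ itself or a subtype of some $lab_H(e)\in\mathcal{T}\cup\{p\}$; since a primitive type has no proper subtypes and the types in $\mathcal{T}$ have no skeleton subtypes by hypothesis, the only way $\times(H)$ could have a skeleton subtype is for $\times(H)$ itself to be skeleton, i.e.\ $E_H=\emptyset$. But if $E_H=\emptyset$ then $H\to p$ cannot be the conclusion of any inference: the axiom $p^\bullet\to p$ and the rules $(\div\to)$, $(\times\to)$ all place an edge into the antecedent, whereas $(\to\div)$, $(\to\times)$ (and the degenerate $\times$-axiom $M\to\times(M)$) all yield a non-primitive succedent. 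This contradicts derivability, so $E_H\neq\emptyset$ and $\times(H)$ has no skeleton subtype.

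Next, were $p$ lonely in $\times(H)$, Lemma~\ref{wolf1} would give $\mathrm{HL}\not\vdash H\to p$, a contradiction; hence $p$ is not lonely in $\times(H)$. This is the step I expect to be the fiddliest, since it requires unwinding the definitions of top occurrence and loneliness to pinpoint exactly which occurrence of $p$ fails. The top occurrences of $p$ within $\times(H)$ are precisely the top occurrences of $p$ within the labels $lab_H(e)$, $e\in E_H$. For an edge $e$ with $lab_H(e)\in\mathcal{T}$, loneliness of $p$ in $lab_H(e)$ supplies, for each such occurrence, a witnessing subtype $\times(M)$ with $|E_M|\ge 2$; since this $\times(M)$ sits inside $lab_H(e)$, it is again a subtype of $\times(H)$, so these edges never break loneliness. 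Therefore the offending occurrence comes from an edge $e$ labelled by $p$ itself, and the only possible witness for it is $\times(H)$ with $M=H$; for loneliness to fail we must have $|E_H|<2$, i.e.\ $E_H=\{e\}$ with $lab_H(e)=p$.

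Finally, with $H$ carrying a single edge labelled by the primitive type $p$, I would inspect the last step of a derivation of $H\to p$ once more. The succedent $p$ is primitive, so the last step is neither $(\to\div)$ nor $(\to\times)$; and it is not $(\div\to)$ or $(\times\to)$ either, since each of these puts into the antecedent an edge labelled by a $\div$-type, respectively a $\times$-type, whereas the unique edge of $H$ is labelled $p$. Hence $H\to p$ is an instance of the axiom $p^\bullet\to p$, forcing $H=p^\bullet$, as claimed. The only real subtlety throughout is the occurrence-matching bookkeeping in the previous paragraph (and checking that a witness for loneliness inside $lab_H(e)$ survives as a subtype of $\times(H)$); the skeleton case and the final step are routine inspections of the four rules.
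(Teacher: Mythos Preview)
Your argument is correct and follows the same route as the paper's proof: apply Lemma~\ref{wolf1} in contrapositive to conclude that $p$ is not lonely in $\times(H)$, trace the offending top occurrence to an edge of $H$ labelled $p$, force $|E_H|=1$, and then observe that no rule can yield such a sequent except the axiom. Your treatment is in fact more careful than the paper's, which simply asserts that $\times(H)$ has no skeleton subtypes; your case analysis ruling out $E_H=\emptyset$ fills exactly that gap (the ``i.e.'' in your skeleton discussion is slightly loose---skeleton also requires all nodes external---but your subsequent argument covers the full $E_H=\emptyset$ case regardless).
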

\begin{proof}
	$\times(H)$ does not have skeleton subtypes. Thus, according to Lemma \ref{wolf1}, $p$ is not lonely in $\times(H)$. This means that there is a top occurrence of $p$ within $\times(H)$ for which Definition \ref{def_lonely} does not hold. Let $E_H=\{e_1,\dots,e_n\}$. If this occurrence is a proper subtype of some type $T=lab(e_k)$, then $p$ is not lonely within $T$, which contradicts $T\in\mathcal{T}$. Thus for some $k$ $lab(e_k)=p$. In order for $p$ to be lonely within $\times(H)$, $|E_H|$ necessarily equals 1. This implies that $H\to p$ does not contain $\div$ or $\times$ types, which allows us to draw a conclusion that $H\to p$ is an axiom and $H=p^\bullet$. 
\end{proof}

\subsection{Algorithmic complexity}
In the string case the following theorem was proved by Pentus in \cite{Pentus06}:
\begin{theorem}
	The derivability problem in $\mathrm{L}$ is NP-complete.
\end{theorem}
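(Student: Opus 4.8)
The plan is to prove the two halves separately: membership in $\mathrm{NP}$, which is the routine direction, and $\mathrm{NP}$-hardness, which is the deep one.

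For membership I would rely on cut elimination for $\mathrm{L}$ (the string-case analogue of Theorem~\ref{cut}, originally due to Lambek): a derivable sequent has a cut-free derivation, and cut-free derivations enjoy the subformula property, so every type occurring anywhere in such a derivation is a subtype of a type of the end-sequent. Now observe that each of the six rules, read downward, erases exactly one occurrence of a connective ($\backslash$, $/$ or $\cdot$) and otherwise only redistributes material already present; consequently, for each rule the total size of the premises is strictly smaller than the size of the conclusion. A straightforward induction then shows that a cut-free derivation of a sequent of size $n$ has total size $O(n^2)$. Such a derivation tree is a polynomial-size certificate that can be checked in polynomial time, so $\mathrm{L}$-derivability is in $\mathrm{NP}$.

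For $\mathrm{NP}$-hardness I would give a polynomial-time reduction from $\mathrm{SAT}$; this is precisely the content of Pentus~\cite{Pentus06}. Given a CNF formula $\varphi$ over variables $x_1,\dots,x_m$, one constructs a Lambek sequent $\Gamma_\varphi\to A_\varphi$ with $\mathrm{L}\vdash\Gamma_\varphi\to A_\varphi$ iff $\varphi$ is satisfiable. The idea is to attach to each variable a pair of ``literal'' types and to exploit non-commutativity: in a cut-free proof, the way the $(\backslash\to)$ and $(/\to)$ rules must split the antecedent is forced, up to harmless rearrangement, and this forces a choice, for each variable, of one of its two literals, i.e.\ a truth assignment; auxiliary ``clause'' types placed in the antecedent can be consumed only if the chosen assignment satisfies every clause.

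The main obstacle is exactly the design and verification of this encoding. The trouble is that the context-splitting rules of $\mathrm{L}$ act globally rather than locally, so one must arrange the types so that \emph{every} cut-free derivation corresponds to a genuine assignment (no ``cheating'' by distributing a clause's contribution across unrelated variables) and, conversely, that every satisfying assignment can be turned into a derivation; both implications are proved by induction on cut-free proofs using the subformula property, and establishing this correspondence tightly is the whole difficulty of Pentus's argument, which I would cite rather than reproduce. Once the theorem is in place, $\mathrm{NP}$-hardness of the parsing (derivability) problem for $\mathrm{HL}$ follows immediately from the faithful embedding $tr$ of Theorem~\ref{embed_lambek}, while membership for $\mathrm{HL}$ would be argued by an analogous polynomial bound on cut-free derivations using Theorem~\ref{cut}.
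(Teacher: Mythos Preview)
The paper does not prove this theorem at all: it is stated as a known result, attributed to Pentus~\cite{Pentus06}, and then used as a black box to derive NP-hardness of $\mathrm{HL}$ (Theorem~\ref{NP_der}). So there is no ``paper's own proof'' to compare against; your sketch already goes beyond what the paper does.

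That said, your outline is accurate. The NP-membership argument via cut elimination, the subformula property, and the observation that each rule strictly decreases total size (hence cut-free derivations are polynomially bounded) is the standard one. For hardness you correctly identify Pentus's SAT encoding as the substance and, like the paper, propose to cite it rather than reproduce it. One small note: your final paragraph about lifting the result to $\mathrm{HL}$ via $tr$ and Theorem~\ref{cut} is really a proof of Theorem~\ref{NP_der}, not of the statement under discussion; the paper's own proof of Theorem~\ref{NP_der} (in the appendix) follows exactly the route you describe---a polynomial certificate for membership, and hardness by the embedding of $\mathrm{L}$.
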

The derivability problem for $\mathrm{HL}$, obviously, has no less complexity than that for $\mathrm{L}$ (the latter is embedded into the former). It can be shown that complexity is in $\mathrm{NP}$, and thus
\begin{theorem}\label{NP_der}
	The derivability problem in $\mathrm{HL}$ is NP-complete.
\end{theorem}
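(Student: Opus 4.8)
The plan is to handle the two halves of $\mathrm{NP}$-completeness separately. For $\mathrm{NP}$-hardness I would reduce from derivability in $\mathrm{L}$, which is $\mathrm{NP}$-hard by Pentus's theorem. The translation $tr$ of Section~\ref{sec_embed_lambek} is clearly computable in linear time and is injective, and by Theorem~\ref{embed_lambek} we have $\mathrm{L}\vdash\Gamma\to C$ iff $\mathrm{HL}\vdash tr(\Gamma\to C)$ (part~1 gives ``only if''; part~2 together with injectivity of $tr$ gives ``if''). Thus $tr$ is a polynomial-time many-one reduction, and $\mathrm{HL}$-derivability is $\mathrm{NP}$-hard.

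For membership in $\mathrm{NP}$ the certificate will be a cut-free derivation. By the cut elimination theorem (Theorem~\ref{cut}), a graph sequent is derivable in $\mathrm{HL}$ iff it has a cut-free derivation, so it suffices to show (i) that a candidate proof tree can be verified to be a valid cut-free $\mathrm{HL}$ derivation of a given sequent in polynomial time, and (ii) that every derivable sequent admits a cut-free derivation whose total size is polynomial in the size of the sequent. Point (i) is routine: one checks that each leaf is an axiom $p^\bullet\to p$ and that each internal node instantiates one of the four rule schemes, which only requires comparing graphs up to isomorphism and checking the relabelings and replacements. For point (ii) I would first establish a subformula property: in any cut-free derivation of $H_0\to A_0$, every type that occurs (as a succedent or as an edge label of an antecedent) is a subtype of some label of $H_0$ or of $A_0$. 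This is proved by induction from the end-sequent upwards through the derivation, inspecting the four rules: $(\div\to)$ and $(\times\to)$ leave the succedent unchanged and, passing to a premise, replace a region of the antecedent by a strictly smaller fragment of a subtype; $(\to\div)$ and $(\to\times)$ replace the succedent by a subtype and unfold a denominator/argument graph all of whose labels are subtypes. Hence every type appearing anywhere in the derivation lies among the $O(|H_0|+|A_0|)$ subtypes of the end-sequent.

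Granting the subformula property, I would bound the derivation by two separate counts. First, the number of rule applications is linear in $|H_0|+|A_0|$: reading bottom-up, each application ``consumes'' a single connective occurrence, or a single edge occurrence of a denominator/argument graph, belonging to a subtype of the end-sequent, and --- exactly as in the standard polynomial size bound for cut-free proofs of multiplicative linear logic --- no such occurrence can be consumed twice. Second, each sequent in the derivation has polynomial size: its edge labels are drawn from the linearly many subtypes (each of polynomial size), and the number of edges of an antecedent increases only at the ``unfolding'' steps $(\times\to)$ and $(\to\div)$, each of which adds at most $O(|H_0|+|A_0|)$ edges, of which only polynomially many occur on any branch. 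Multiplying the two bounds yields a cut-free derivation of polynomial size, which is the required certificate; combined with the hardness argument this gives that $\mathrm{HL}$-derivability is $\mathrm{NP}$-complete.

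The main obstacle I anticipate is precisely the polynomial size bound of point~(ii), and within it the control of the antecedent graphs: since $(\times\to)$ and $(\to\div)$ read bottom-up paste whole subgraphs of types into the antecedent, one must make sure this cannot cascade super-polynomially. The clean way is to equip sequents with a well-founded measure --- say a lexicographic or multiset measure recording, for a fixed derivation of $H_0\to A_0$, the connective occurrences and denominator-graph edge occurrences of the end-sequent that have not yet been processed --- and verify that it strictly decreases at each of the four rules; the subformula property is what confines the measure to a fixed polynomially bounded pool. The remaining ingredients (the reduction, the syntactic checkability of a derivation, and the subformula property itself) are straightforward.
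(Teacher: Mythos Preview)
Your hardness argument via $tr$ and Theorem~\ref{embed_lambek} is exactly the paper's argument.

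For membership in $\mathrm{NP}$ your plan is correct but considerably more elaborate than needed, and the ``obstacle'' you flag is illusory. The paper's argument is a one-line size computation: for \emph{each} of the four rules, the sum of the sizes of the premise sequents is strictly smaller than the size of the conclusion sequent (here the size of a sequent is the sum of $|T|$ over all edge labels $T$ of the antecedent plus $|A|$ for the succedent, with $|T|$ as in the appendix). Check this for $(\times\to)$ and $(\to\div)$, the two rules you worry about: in $(\times\to)$ the premise replaces an edge labelled $\times(F)$ by $F$, and $|\times(F)|=\big(\sum_{e\in E_F}|lab(e)|\big)+1$, so the size drops by exactly $1$; in $(\to\div)$ the antecedent grows from $F$ to $D[e_0/F]$ but the succedent shrinks from $N\div D$ to $N$, and again the net change is $-1$. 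The same $-1$ holds for $(\div\to)$ and $(\to\times)$. Telescoping, the number of rule applications plus the total size of the axiom leaves equals the size of the end-sequent, so the derivation tree has linearly many nodes, each carrying a sequent of size at most that of the end-sequent; the whole tree is at most quadratic. No subformula property, no multiset measure, and no cascade analysis are needed. (Also, since $\mathrm{HL}$ as defined has no cut rule, every derivation is already cut-free; invoking Theorem~\ref{cut} is harmless but unnecessary.)

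One genuine point you gloss over: verifying a rule instance requires recognising that one hypergraph arises from another by a specified replacement, i.e.\ matching up to isomorphism. Graph isomorphism is not known to be in $\mathrm{P}$, so you should make the isomorphisms part of the certificate rather than recompute them; the paper does exactly this, and notes that their total size is still bounded by the size of the end-sequent.
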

\subsection{Connection to the First-Order Intuitionistic Logic}\label{ssec_datalog}
Stanislav Kikot pointed out to us that $\mathrm{HL}$ is connected to Datalog\footnote{Datalog is a logic programming language, which can express database queries.} and to the first-order intuitionistic logic ($\FOInt$). To show the main idea of such a connection we provide examples of translating sequents of $\mathrm{HL}$ in formulas of $\FOInt$ (the syntax of these formulas is related to Datalog with embedded implications, see \cite{Bonner89}):
\begin{center}
	\begin{tabular}{|c|c|}
		\hline
		$p\to p$ & $p(x,y)\to p(x,y)$ \\
		\hline
		$tr(p/q,q/r\to p/r)$ & $\exists z\left[\forall a\left(q(z,a)\to p(x,a)\right)\;\&\; \forall b\left(r(y,b)\to q(z,b)\right)\right]\to \forall c \left(r(y,c)\to p(x,c)\right)$ \\
		\hline
		$tr(p/q,q,r\to p\cdot r)$ & $\exists z\exists t\big(\forall a\left(q(z,a)\to p(x,a)\right)\;\&\;q(z,t)\;\&\;r(t,y)\big)\to\exists u(p(x,u)\;\&\;r(u,y))$\\
		\hline
		$
		\mbox{{\tikz[baseline=.1ex]{
					\node (V) {};
					\node[hyperedge, above=-3mm of V] (E1) {$t$};
					\node[node,right= 6mm of E1,label=below:{\scriptsize $(1)$}] (N1) {};
					\node[hyperedge,right=6mm of N1] (E2) {$t$};
					\node[node, above=3mm of N1] (N2) {};
					\node[above=0mm of N2] (N3) {};
					\draw[-,black] (E1) -- node[above] {\scriptsize 1} (N1);
					\draw[-,black] (N1) -- node[above] {\scriptsize 1} (E2);
			}}}\to t
		$ & $\exists y\left[t(x)\;\&\;t(x)\right]\to t(x)$\\
		\hline
	\end{tabular}
\end{center}
This embedding (denoted by $v$) is formally defined in Appendix \ref{sec_datalog_appendix}. It holds that
\begin{theorem}\label{fo-int}
	If $\mathrm{HL}\vdash H\to A$, then $\FOInt\vdash v(H\to A)$.
\end{theorem}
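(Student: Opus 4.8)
The natural approach is induction on the length of the derivation of $H \to A$ in $\mathrm{HL}$, so I first need to spell out the translation $v$ precisely enough to make the inductive step mechanical. The guiding idea, visible in the table, is this: to each node of a graph we associate a first-order variable, to each type $T$ with $type(T)=n$ and each $n$-tuple $\bar x$ of node-variables we associate a formula $v_{\bar x}(T)$, and to the whole antecedent $H$ (a graph whose external nodes get the ``interface'' variables and whose internal nodes get existentially quantified variables) we associate the conjunction of the $v(\cdot)$-translations of its edge labels, prefixed by existential quantifiers over the internal-node variables. The translation of a sequent $H \to A$ is then $v(H) \to v_{ext}(A)$, where $v_{ext}(A)$ uses the external-node variables of $H$ (which coincide, by the $type$ constraint, with the arity of $A$). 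The key recursive clauses are: $v_{\bar x}(N \div D)$ is $\forall (\text{free vars of } D \text{ other than those on } e_0)\,\bigl(v(D[\,e_0 \text{ erased}\,]) \to v_{\bar x}(N)\bigr)$, realizing division as implication-with-universal-closure, exactly as in the $p/q$ example; and $v_{\bar x}(\times(M))$ is the existential-conjunctive translation of $M$ with $\bar x$ on its external nodes, realizing product as existential conjunction.

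Once $v$ is pinned down, I would check the base case — the axiom $p^\bullet \to p$ maps to $p(x,y) \to p(x,y)$ (more generally $p(\bar x) \to p(\bar x)$), which is trivially $\FOInt$-provable — and then handle the four rules one at a time. For $(\to\times)$, the conclusion $M[m_1/H_1,\dots] \to \times(M)$ translates, after distributing existentials and conjunctions, to something that follows from the translated premises $H_i \to lab(m_i)$ by $\exists$-introduction and $\&$-introduction; this is the easiest case. For $(\times\to)$, replacing an edge labelled $\times(F)$ by $F$ just flattens one layer of $\exists\&$ on the left, and since $\exists x(\varphi \& \psi(x))$ and the flattened form are $\FOInt$-interderivable under the surrounding context (intuitionistically, $\exists$ and $\&$ commute with $\exists/\&$), the conclusion's translation follows from the premise's. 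For $(\to\div)$, the premise $D[e_0/F] \to N$ translates so that, by $\to$-introduction and $\forall$-introduction (generalizing on the non-interface variables of $D$), one obtains exactly $v(F) \to v_{ext}(N \div D)$; here I must be careful that the variables universally quantified in $v_{\bar x}(N\div D)$ do not occur free in $v(F)$, which holds because those are precisely $D$'s internal nodes plus the non-$e_0$ attachment nodes, disjoint from $F$'s interface. The rule $(\div\to)$ is the substantive case: from $H \to A$, $H_1 \to lab(d_1)$, \dots, $H_k \to lab(d_k)$ we must derive the translation of $H[e/D][d_0 := N\div D][d_1/H_1,\dots]$; intuitively we have a hypothesis of type $N \div D$ sitting in the antecedent — i.e. a universally-quantified implication whose premise is built from the $H_i$'s — so by instantiating that $\forall\to$ hypothesis with the witnesses provided by $H_1,\dots,H_k$ (which exist by the translated premises) we obtain $v_{\bar x}(N)$, i.e. $v_{\bar x}(A)$-material in the right place, and then feed it through the translated $H \to A$.

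The main obstacle, I expect, is purely bookkeeping: managing variable naming so that the existentials introduced for internal nodes of the various $H_i$ and of $D$ are fresh and correctly scoped, and verifying that the hyperedge-replacement operations ($[e/D]$, $[d_i/H_i]$, node fusion identifying external nodes of $H_i$ with attachment nodes of $d_i$) commute on the logic side with substitution of terms/variables into the translated formulas. In effect one needs a ``substitution lemma'': $v$ of a graph obtained by replacing edge $e$ (labelled $T$) with a graph $G'$ of matching type equals $v$ of the original graph with the atom/subformula $v_{att(e)}(T)$ replaced by $v$ of $G'$ with its external variables identified to $att(e)$ — modulo renaming of $G'$'s internal variables apart. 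Proving this lemma by induction on graph structure is the technical heart; granting it, each of the four rule cases reduces to a short, standard $\FOInt$ natural-deduction argument using only $\&$-introduction/elimination, $\exists$-introduction/elimination, $\to$-introduction/elimination and $\forall$-introduction/elimination. I would also remark that this argument is one-directional (as the theorem states): the converse fails or requires extra hypotheses because $\FOInt$ proofs need not respect the ``linear'' resource discipline built into $\mathrm{HL}$, so I would not attempt it here.
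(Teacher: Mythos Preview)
Your proposal is correct and follows essentially the same route as the paper: induction on the $\mathrm{HL}$-derivation, with the axiom case trivial and the four rule cases handled by elementary $\FOInt$ manipulations ($\exists/\&$ commutation for $(\times\to)$, $\forall$- and $\to$-introduction for $(\to\div)$, instantiation of the $\forall\to$ hypothesis for $(\div\to)$, and $\exists/\&$-introduction for $(\to\times)$). The one noteworthy difference is that you isolate and name the ``substitution lemma'' describing how $v$ interacts with hyperedge replacement, whereas the paper leaves this implicit and argues each case through worked examples; your formulation is cleaner and makes the variable-freshness bookkeeping explicit, but the underlying argument is the same.
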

The converse is not true (look e.g. at the last sequent, which is not derivable). 

Summing up, syntactically $\mathrm{HL}$ can be considered as a part of the intuitionistic (or classical, if desired) first-order logic, also connected to logic programming.
\section{Conclusion}
Hypergraph Lambek calculus inherits main principles from $\mathrm{L}$. Many features of $\mathrm{L}$ are preserved by $\mathrm{HL}$ (the cut elimination, but also structural properties and models discussed in the preprint \cite{Pshenitsyn20Preprint}). Besides, different sequential calculi based on concepts related to the Lambek calculus appear to be fragments of $\mathrm{HL}$, and $\mathrm{HL}$ itself can be embedded in the intuitionistic logic. It is also known that $\mathrm{HL}$ forms a basis for categorial grammars, which appear to be stronger than hyperedge replacement grammars (see \cite{Pshenitsyn20Preprint}). In general, $\mathrm{HL}$ is a powerful logical tool, which is interesting to be investigated from different points of view (model-theoretic, proof-theoretic, grammatical).
\bibliography{Hypergraph_Lambek_Calculus_Preprint}

\newpage
\appendixpage
\appendixtocoff
\appendix
\section{Proofs and Sketches of Proofs}\label{sec_proofs}
\subsection{Theorem \ref{embed_nld}}
\begin{proof}[Proof sketch of Theorem \ref{embed_nld}.]\label{embed_nld_proof}
	The first statement is proved by a straightforward induction.
	
	The second statement is also proved by induction on length of a derivation in $\mathrm{HL}$, and in general it is similar to the proof of Theorem \ref{embed_lambek} (see \ref{embed_lambek_proof}). The axiom case is the same (the only difference is that type of primitive types equals 1). Similarly, to prove the induction step, we consider the last rule applied in a derivation of $G\to T$. We note that all premises of this rule have to correspond to sequents in $\mathrm{NL\diamondsuit}$ by the induction hypothesis; then it suffices to note that this last step transforms premises in such a way that the resulting sequent $G\to T$ also corresponds to a derivable sequent in $\mathrm{NL\diamondsuit}$, i.e. $G\to T=tr_\diamond(\Gamma\to C)$; namely, we note that all the transformations are substitutions of trees into trees.
	
	However, one difficulty arises. Let the last rule be, for instance, $(\div\to)$ and let, e.g., $tr_\diamond(A/B)$ appear after its application. Then there are two premises of the form $F_l\to p_{l}$ and $F_r\to p_r$ where $F_l,F_r$ are subgraphs of $G$. Unfortunately, we cannot apply the induction hypothesis to these premises since $p_l,p_r$ are not translations of types. However, here we apply Corollary \ref{wolf}: the only possible sequent (over the considered set of types) with $p_l$ ($p_r$) in the succedent is $p_l^\bullet\to p_l$ ($p_r^\bullet\to p_r$ resp.). Indeed, note that for each type $T$ in the set $\mathcal{T}\eqdef tr_\diamond(Tp(\mathrm{NL\diamondsuit}))\cup\{p_r,p_\diamond\}$ it is true that $T$ does not have skeleton subtypes and that $p_{l}$ is lonely in $T$. Thus we can apply Corollary \ref{wolf} to $F_l\to p_l$ and obtain that $F_l=p_l^\bullet$. This is a desired result: $p_l$-labeled edges can ``interact'' only with $p_l$-labeled edges, and hence they work in a way which corresponds to rules of $\mathrm{NL\diamondsuit}$. The same reasoning works with $p_r$ and $p_\diamond$. 
	
	Using this observation one can straightforwardly show how to transform each step of a derivation in $\mathrm{HL}$ into a step of a derivation in $\mathrm{NL}\diamondsuit$.
\end{proof}

\subsection{Theorem \ref{embed_l1}}\label{embed_l1_proof}
\begin{proof}
	The proof is similar to that of Theorem \ref{embed_lambek}. In both directions we need to use induction on length of a derivation. Let us focus on cases where $\mathbf{1}$ participates in each direction.
	\\
	\begin{enumerate}
		\item Let $\mathrm{L}_{\mathbf{1}}^\ast\vdash \Gamma\to C$. 
		
		If it is the axiom $\to\mathbf{1}$, then it is translated into a graph sequent $(\Lambda)^\bullet\to\times((\Lambda)^\bullet)$.
		
		If this sequent is obtained after the rule application
		$$
		\infer[(\mathbf{1}\to)]{\Phi,\mathbf{1},\Psi\to A}{\Phi,\Psi\to A}
		$$
		then this rule can be remodeled by $(\times\to)$ with the type $tr_{\mathbf{1}}(\mathbf{1})$.
		
		\item Let $\mathrm{HL}\vdash tr_{\mathbf{1}}(\Gamma\to C)$.
		
		If $tr_{\mathbf{1}}(C)=tr_{\mathbf{1}}(\mathbf{1})$ and the last rule is $(\to\times)$, then $tr_{\mathbf{1}}(\Gamma)=(\Lambda)^\bullet$, and this corresponds to the axiom case $\to\mathbf{1}$.
		
		If the last rule is $(\times\to)$, and it is applied to a type of the form $tp_{\mathbf{1}}(\mathbf{1})$, then it can be remodeled using the rule $(\mathbf{1}\to)$.
	\end{enumerate}
\end{proof}
\subsection{Theorem \ref{embed_lr}}
Let us start with the following
\begin{definition}
	A \emph{generalized string graph} induced by a string $a_1\dots a_n$ is a graph of the form $\langle\{v_0,\dots,v_n\},\{e_1,\dots,e_n\},att,lab,v_0v_n\rangle$ where $lab(e_i)=a_i$ and either $att(e_i)=v_{i-1}v_i$ or $att(e_i)=v_iv_{i-1}$. It is denoted as $(a_1^{\delta_1}\dots a_n^{\delta_n})^\bullet$ where $\delta_i=0$ if $att(e_i)=v_{i-1}v_i$ and $\delta_i=1$ otherwise.
\end{definition}
\begin{definition}
	We denote by $[A]^0$ the type $A$ and by $[A]^1$ the type $A^\Rev$.
\end{definition}
Let us introduce a function called $tr_\Rev^\prime$ operating on sequents, which assigns a finite set of graph sequents to each sequent in $\mathrm{L}^\Rev$. This is done as follows: if $Seq=A_1,\dots, A_n\to C$ is a sequent and $A_i=[B_i]^{\delta_i}$ for some $B_i$ and $\delta_i\in\{0,1\}$, then $(tr_\Rev(B_1)^{\delta_1}\dots tr_\Rev(B_n)^{\delta_n})^\bullet\to tr_\Rev(C)$ belongs to $tr_\Rev^\prime(Seq)$. Thus $tr^\prime_\Rev(Seq)$ contains graph sequents that differ from $tr_\Rev(Seq)$ by directions of some arrows and absence of corresponding reversal operations.
\begin{example}
	$tr_\Rev^\prime(p^\Rev,q^\Rev,r\to A)$ for some type $A$ consists of 4 graph sequents:
	\begin{enumerate}
		\item $\mbox{	
			{\tikz[baseline=.1ex]{
					\node[node,label=left:{\scriptsize $(1)$}] (N1) {};
					\node[node,right=15mm of N1] (N2) {};
					\node[node,right=15mm of N2] (N3) {};
					\node[node,right=15mm of N3,label=right:{\scriptsize $(2)$}] (N4) {};
					\draw[->,black] (N1) -- node[above] {$tr_\Rev(p^\Rev)$} (N2);
					\draw[->,black] (N2) -- node[above] {$tr_\Rev(q^\Rev)$} (N3);
					\draw[->,black] (N3) -- node[above] {$r$} (N4);
		}}}\to tr_\Rev(A)$
		\item $\mbox{	
			{\tikz[baseline=.1ex]{
					\node[node,label=left:{\scriptsize $(1)$}] (N1) {};
					\node[node,right=15mm of N1] (N2) {};
					\node[node,right=15mm of N2] (N3) {};
					\node[node,right=15mm of N3,label=right:{\scriptsize $(2)$}] (N4) {};
					\draw[->,black] (N1) -- node[above] {$tr_\Rev(p^\Rev)$} (N2);
					\draw[->,black] (N3) -- node[above] {$q$} (N2);
					\draw[->,black] (N3) -- node[above] {$r$} (N4);
		}}}\to tr_\Rev(A)$
		\item $\mbox{	
			{\tikz[baseline=.1ex]{
					\node[node,label=left:{\scriptsize $(1)$}] (N1) {};
					\node[node,right=15mm of N1] (N2) {};
					\node[node,right=15mm of N2] (N3) {};
					\node[node,right=15mm of N3,label=right:{\scriptsize $(2)$}] (N4) {};
					\draw[->,black] (N2) -- node[above] {$p$} (N1);
					\draw[->,black] (N2) -- node[above] {$tr_\Rev(q^\Rev)$} (N3);
					\draw[->,black] (N3) -- node[above] {$r$} (N4);
		}}}\to tr_\Rev(A)$
		\item $\mbox{	
			{\tikz[baseline=.1ex]{
					\node[node,label=left:{\scriptsize $(1)$}] (N1) {};
					\node[node,right=15mm of N1] (N2) {};
					\node[node,right=15mm of N2] (N3) {};
					\node[node,right=15mm of N3,label=right:{\scriptsize $(2)$}] (N4) {};
					\draw[<-,black] (N1) -- node[above] {$p$} (N2);
					\draw[<-,black] (N2) -- node[above] {$q$} (N3);
					\draw[->,black] (N3) -- node[above] {$r$} (N4);
		}}}\to tr_\Rev(A)$
	\end{enumerate}
\end{example}
\begin{lemma}\label{lemma_lr}\leavevmode
	\begin{enumerate}
		\item $tr_\Rev(Seq)\in tr^\prime_\Rev(Seq)$;
		\item If $H\to T$ belongs to $tr^\prime_\Rev(Seq)$, then $\mathrm{HL}\vdash tr_\Rev(Seq)$ if and only if $\mathrm{HL}\vdash H\to T$. 
	\end{enumerate}
\end{lemma}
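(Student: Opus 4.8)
The plan is to prove both statements by analyzing how the reversal connective $^\Rev$ interacts with the translation $tr_\Rev$ and with the graph-sequent derivability relation. Statement (1) is immediate from the definitions: taking $\delta_i = 0$ for every $i$ in the definition of $tr_\Rev'$ yields precisely $tr_\Rev(Seq)$, since $[B_i]^0 = B_i$ forces $A_i = B_i$ and the arrow $att(e_i) = v_{i-1}v_i$ is the default orientation of a string graph. So the work is entirely in statement (2).

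For statement (2), first I would observe that all graph sequents in $tr_\Rev'(Seq)$ are built from the same multiset of type-labels (namely $tr_\Rev(B_1),\dots,tr_\Rev(B_n)$ on the antecedent and $tr_\Rev(C)$ on the succedent), differing only in the orientation $\delta_i$ of each edge. The key structural fact I would isolate and prove is a \emph{local flip lemma}: for any generalized string graph $(t_1^{\gamma_1}\dots t_n^{\gamma_n})^\bullet \to t$, reversing the orientation of the $i$-th edge and simultaneously replacing its label $t_i$ by $\times$ of the reversed one-edge graph on label $t_i$ (i.e. the graph used in the definition of $tr_\Rev(A^\Rev)$) — or vice versa, peeling such a $\times$ off — preserves derivability in $\mathrm{HL}$. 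Concretely, if $e$ is an edge of a graph $H$ labeled by $\times(M)$ where $M$ is the single reversed arrow labeled $U$, then $H \to T$ is derivable iff $H[e/M] \to T$ is; the forward direction is Proposition \ref{prop_reversibility}(1), and the backward direction is one application of $(\times\to)$. Since $tr_\Rev([B_i]^{1}) = tr_\Rev(B_i^\Rev) = \times(\text{reversed arrow on } tr_\Rev(B_i))$, moving between $\delta_i = 0$ and $\delta_i = 1$ at position $i$ is exactly: flip the arrow direction and wrap/unwrap a $\times$. Applying this equivalence at each coordinate where $Seq$'s native $\delta_i$ differs from $0$ lets me pass between $tr_\Rev(Seq)$ (all $\delta_i$ forced by the actual form of $A_i$) and an arbitrary $H \to T \in tr_\Rev'(Seq)$ in finitely many derivability-preserving steps.

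One subtlety I would address carefully: the succedent $tr_\Rev(C)$ is untouched by $tr_\Rev'$ (only antecedent edges get their orientation varied), so the $\times$-wrapping/unwrapping trick on the succedent side is never needed — Proposition \ref{prop_reversibility}(1) is about antecedent edges, which is exactly our situation, so there is no asymmetry to worry about. A second point worth spelling out is that flipping the orientation of edge $e_i$ in the generalized string graph, when its label is the reversed one-edge graph $\times(\,\cdot\,)$, genuinely yields an \emph{isomorphic} underlying "type content": the arrow inside the $\times$ points one way and the outer edge the other, so after the $(\times\to)$ substitution the two endpoint nodes line up correctly with $v_{i-1}, v_i$ regardless of which of $att(e_i) = v_{i-1}v_i$ or $v_i v_{i-1}$ we started from. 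I expect the main obstacle to be bookkeeping: making the induction on the number of mismatched coordinates clean, and verifying that each single-coordinate flip really is an instance of the $\times$-equivalence above rather than something that also disturbs neighboring edges. Once the local flip lemma is stated precisely, the global statement (2) follows by a straightforward induction, and Theorem \ref{embed_lr} will then reduce the study of $tr_\Rev$ to that of the simpler representatives in $tr_\Rev'(Seq)$.
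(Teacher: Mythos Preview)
Your proposal is correct and follows essentially the same approach as the paper: both part (1) and part (2) are handled exactly as the paper does, namely by taking $\delta_i=0$ for (1) and by invoking the equivalence between $H\to C$ and $H[e_0/M]\to C$ when $lab(e_0)=\times(M)$ for (2). Your treatment is in fact slightly more careful than the paper's, since you explicitly name both directions of that equivalence (Proposition~\ref{prop_reversibility}(1) for one direction, a single application of $(\times\to)$ for the other), whereas the paper simply cites Proposition~\ref{prop_reversibility} as an ``equivalence''.
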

\begin{proof}
	\begin{enumerate}
		\item It suffices to notice that if $Seq=A_1,\dots, A_n\to C$, then one may take $\delta_i=0$ and $B_i=A_i$ for all $i$. In such a case $(B_1^{\delta_1}\dots B_n^{\delta_n})^\bullet\to tr_\Rev(C)=tr_\Rev(Seq)$.
		\item Proposition \ref{prop_reversibility} says that derivability of a sequent $H\to C$ with a type of the form $\times(M)$ on some edge $e_0$ in the antecedent is equivalent to derivability of $H[e_0/M]\to C$. We have already proved that $tr_\Rev(Seq)\in tr^\prime_\Rev(Seq)$, and showed that this corresponds to the case $\delta_i=0$. Now, if in the definition of $tr^\prime_\Rev$ some $\delta_i$ equals $1$, then $A_i=B_i^\Rev$, and an antecedent of the resulting sequent is obtained from that of $tr_\Rev(Seq)$ by replacement of the $i$-th edge labeled by $tr_\Rev(A_i)$ with the edge pointing in the opposite direction and labeled by $tr_\Rev(B_i)$. Proposition \ref{prop_reversibility} implies that this does not affect derivability of the sequent. The same reasonings hold, if several $\delta_i$-s equal 1.
	\end{enumerate}
	\qed
\end{proof}
Theorem \ref{embed_lr} will be proved in the more general case:
\begin{theorem}\label{embed_lr_strong}\leavevmode
	\begin{enumerate}
		\item Let $\Gamma\to C$ be over $\mathrm{L}^\Rev$. Then $\mathrm{L}^\Rev\vdash \Gamma\to C$ if and only if $\mathrm{HL}\vdash tr_\Rev(\Gamma\to C)$.
		\item If $\mathrm{HL}\vdash G\to T$ is a derivable graph sequent over $tr_\Rev(\mathrm{L}^\Rev)$, then $G\to T$ belongs to $tr^\prime_\Rev(\Gamma\to C)$ for some $\Gamma$ and $C$ and $\mathrm{L}^\Rev\vdash \Gamma\to C$.
	\end{enumerate}
\end{theorem}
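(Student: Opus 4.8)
The plan is to establish part~2 (the ``strong'' completeness) first and obtain the remaining half of part~1 from it; the other half of part~1 is a direct induction.

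For the direction $\mathrm{L}^\Rev\vdash\Gamma\to C\Rightarrow\mathrm{HL}\vdash tr_\Rev(\Gamma\to C)$, I would induct on the $\mathrm{L}^\Rev$-derivation, remodelling the rules of $\mathrm{L}$ exactly as in the proof of Theorem~\ref{embed_lambek}(1). The three reversal rules need two preparatory observations about how $tr_\Rev$ treats $^\Rev$. First: reversing the order of the edges of a generalized string graph while simultaneously flipping each of them gives back the same graph with its two external nodes swapped; combining this with Lemma~\ref{lemma_lr} (which, via Proposition~\ref{prop_reversibility}(1), absorbs the $\times(\cdot)$-wrappings that $tr_\Rev$ puts on reversed antecedent types) and one application of $(\to\times)$ on the succedent side yields $\mathrm{HL}\vdash tr_\Rev(\Gamma\to C)\Rightarrow\mathrm{HL}\vdash tr_\Rev(\Gamma^\Rev\to C^\Rev)$, which handles $(\,^\Rev\to\,^\Rev)$. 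Second: a doubly-reversed type translates to $tr_\Rev(A)$ wrapped in two nested $\times(\cdot)$-layers which flip the underlying edge twice and so cancel, whence $tr_\Rev(A^{\Rev\Rev})$ and $tr_\Rev(A)$ are mutually derivable as handles --- for an antecedent occurrence by Proposition~\ref{prop_reversibility}(1) applied twice, for a succedent occurrence by $(\times\to)$ twice together with Proposition~\ref{TtoT} and the cut rule (Theorem~\ref{cut}) --- which handles $(\,^{\Rev\Rev}\to)$ and $(\to\,^{\Rev\Rev})$.

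For part~2, I would induct on the length of an $\mathrm{HL}$-derivation of $G\to T$ over $tr_\Rev(\mathrm{L}^\Rev)$, following the proof of Theorem~\ref{embed_lambek}(2) but with \emph{generalized} string graphs in place of ordinary ones. The axiom case is immediate. For each of the four rules, the induction hypothesis places every premise in $tr'_\Rev$ of an $\mathrm{L}^\Rev$-derivable sequent --- so in particular each premise-antecedent is a generalized string graph --- and one checks that the rule yields a generalized string graph lying in $tr'_\Rev$ of a sequent derivable in $\mathrm{L}^\Rev$. As in Theorem~\ref{embed_lambek}(2): after $(\div\to)$ a type $tr_\Rev(A)\div D$ with $D\in\{(\$\,tr_\Rev(B))^\bullet,(tr_\Rev(B)\,\$)^\bullet\}$ must appear, after $(\times\to)$ the compressed type is $tr_\Rev(A\cdot B)$ or $tr_\Rev(A^\Rev)$, and so on. The genuinely new ingredient is direction bookkeeping: the edge being replaced or unfolded may occur in either orientation in the generalized string graph, and accordingly the step is mirrored in $\mathrm{L}^\Rev$ either by the plain rule $(/\to)$, $(\backslash\to)$, $(\cdot\to)$ or by its ``reversed'' instance; similarly a $(\times\to)$ step on a $tr_\Rev(A^\Rev)$-labelled edge merely flips that edge and strips (or adds) a layer of $^\Rev$, and a $(\to\times)$ step with succedent $tr_\Rev(A^\Rev)$ is matched by $(\,^\Rev\to\,^\Rev)$ applied to the unique premise. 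That all such ``reversed'' manipulations are admissible in $\mathrm{L}^\Rev$ rests on the standard fact that $^\Rev$ is an anti-automorphism up to provable equivalence there, i.e. $(A/B)^\Rev\leftrightarrow B^\Rev\backslash A^\Rev$, $(A\cdot B)^\Rev\leftrightarrow B^\Rev\cdot A^\Rev$ and $A^{\Rev\Rev}\leftrightarrow A$, each provable from the displayed rules plus cut. Finally, the remaining direction of part~1 falls out: $tr_\Rev(\Gamma\to C)$ is a graph sequent over $tr_\Rev(\mathrm{L}^\Rev)$, so part~2 gives $\Gamma',C'$ with $tr_\Rev(\Gamma\to C)\in tr'_\Rev(\Gamma'\to C')$ and $\mathrm{L}^\Rev\vdash\Gamma'\to C'$; since its antecedent is an ordinary all-forward string graph and generalized string graphs are rigid, this decomposition uses no flips, and injectivity of $tr_\Rev$ on types forces $\Gamma'=\Gamma$, $C'=C$.

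The step I expect to be the main obstacle is exactly this uniform direction bookkeeping in part~2: one must verify, for each of the four $\mathrm{HL}$ rules and for both orientations of every edge involved, that the rule preserves the class of generalized string graphs and corresponds to a legitimate (possibly reversed) inference of $\mathrm{L}^\Rev$. The small sublemma that $tr_\Rev$ intertwines reversal with the ``flip the edge'' operation --- modulo the transparent $\times(\cdot)$-layers --- is what makes this go through; everything else is a routine transcription of the proofs of Theorem~\ref{embed_lambek} and Lemma~\ref{lemma_lr}.
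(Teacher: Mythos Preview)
Your proposal is correct and follows essentially the same route as the paper: the ``only if'' direction of part~1 by induction on the $\mathrm{L}^\Rev$-derivation (using Lemma~\ref{lemma_lr} and Proposition~\ref{prop_reversibility} for the reversal rules, and cut admissibility in $\mathrm{HL}$ for the double-reversal succedent rule), part~2 by induction on the $\mathrm{HL}$-derivation with a case split on the last rule and on the orientation of the active edge (invoking the $\mathrm{L}^\Rev$-equivalences $(A/B)^\Rev\leftrightarrow B^\Rev\backslash A^\Rev$, $(A\cdot B)^\Rev\leftrightarrow B^\Rev\cdot A^\Rev$, $A^{\Rev\Rev}\leftrightarrow A$ for the ``flipped'' subcases), and then the ``if'' direction of part~1 read off from part~2. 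One small omission: $\mathrm{L}^\Rev$ as defined in the paper has the cut rule as a \emph{primitive} rule, so your induction in the ``only if'' direction must also treat that case --- it is immediate from Theorem~\ref{cut}, but should be mentioned.
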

\begin{proof}
	Firstly, we are going to prove \textbf{the ``only if'' part of the first statement}. This is done by induction on length of the derivation of $\Gamma\to C$. 
	
	The axiom case is trivial. To prove the induction step we look at the last rule applied, apply the induction hypothesis to its premises and then show that the application of this rule in $\mathrm{L}^\Rev$ corresponds to an application of a rule in $\mathrm{HL}$. It suffices to consider only rules where $^\Rev$ actively participates since cases with applications of rules $(/\to)$, $(\backslash\to)$ etc. are completed in the proof of Theorem \ref{embed_lambek}. 
	
	\textbf{Case $(\,^\Rev\to\,^\Rev)$}: $\Gamma\to C=\Delta^\Rev\to B^\Rev$ where $\Delta=B_1,\dots,B_n$ and the last rule is of the form
	$$
	\infer[(\,^\Rev\to\,^\Rev)]{\Delta^\Rev\to B^\Rev}{\Delta\to B}
	$$
	Then $tr_\Rev(\Delta\to B)$ is derivable in HL. Applying the rule $(\to\times)$ to it, we obtain a sequent $Seq=(tr_\Rev(B_n)^1\dots tr_\Rev(B_1)^1)^\bullet\to tr_\Rev(B^\Rev)$. According to Lemma \ref{lemma_lr} this sequent is derivable if and only if $\mathrm{HL}\vdash (tr_\Rev(B_n^\Rev)\dots tr_\Rev(B_1^\Rev))^\bullet\to tr_\Rev(B^\Rev)=tr_\Rev(\Gamma\to C)$, which completes the proof in this case.

	\textbf{Case $(\to\,^{\Rev\Rev})$} being translated to graph sequents can be modelled using the cut rule, which is admissible. Indeed, let
	$$
	\infer[]{\Gamma\to C}{\Gamma\to C^{\Rev\Rev}}
	$$
	be the last rule. Then we know that $\mathrm{HL}\vdash tr_\Rev(\Gamma)^\bullet\to tr_\Rev(C^{\Rev\Rev})$. The sequent $tr_\Rev(C^{\Rev\Rev})^\bullet\to tr_\Rev(C)$ is derivable (the last two steps are applications of $(\times\to)$). Now we apply the cut rule:
	$$
	\infer[(\mathrm{cut})]{
		tr_\Rev(\Gamma)^\bullet\to tr_\Rev(C)
	}{
	tr_\Rev(\Gamma)^\bullet\to tr_\Rev(C^{\Rev\Rev}) & tr_\Rev(C^{\Rev\Rev})^\bullet\to tr_\Rev(C) 
	}
	$$
	Here $\Gamma=A_1\dots A_n$. Informally, we turned over the antecedent of the sequent twice and hence changed nothing.
	
	\textbf{Case $(\,^{\Rev\Rev}\to)$} being translated to graph sequents corresponds to a double application of Proposition \ref{prop_reversibility} (eliminating $\times$ twice changes the direction of an arrow twice, hence changes nothing). 
	
	\textbf{Case $(\mathrm{cut})$} immediately follows from admissibility of the cut rule in $\mathrm{HL}$ (Theorem \ref{cut}).
	
	\textbf{The ``if'' part of the first statement} follows from the second statement: if $G\to T=tr_\Rev(\Gamma\to C)$, then there is a sequent $\Gamma^\prime\to C^\prime$ such that $G\to T$ belongs to $tr^\prime_\Rev(\Gamma^\prime\to C^\prime)$ and $\mathrm{L}^\Rev\vdash \Gamma^\prime\to C^\prime$. The fact that $tr_\Rev(\Gamma\to C)$ lies in $tr^\prime_\Rev(\Gamma^\prime\to C^\prime)$ along with the definition of $tr^\prime$ and structure of generalized string graphs implies that $\Gamma\to C=\Gamma^\prime\to C^\prime$, so $\mathrm{L}^\Rev\vdash \Gamma\to C$ as required.
	
	It remains to prove \textbf{the second statement}. First of all, one observes that 
	\begin{enumerate}
		\item Replacement of an edge in a generalized string graph with a generalized string graph results in a generalized string graph as well;
		\item If we replace an edge in a generalized string graph by some graph $G$ and obtain a generalized string graph, then $G$ is a generalized string graph too.
		\item If we replace an edge in some graph $G$ by a generalized string graph and obtain a generalized string graph, then $G$ is a generalized string graph too.
	\end{enumerate}
	Therefore $G$ from the second statement of the theorem has to be a generalized string graph (since the property of being such a graph holgs for antecedents of axiom sequents and is preserved by all rules); that is, $G=(tr_\Rev(A_1)^{\delta_1}\dots tr_\Rev(A_n)^{\delta_n})^\bullet$ for some $n$, for some $A_i\in Tp(\mathrm{L}^\Rev)$ and for some $\delta_i$.
	
	We proceed with induction on length of the derivation of $G\to T$. The axiom case is trivial. As before, the induction step consists of consideration of the last rule applied in the derivation. 
	
	\textbf{Case 1.} $T=tr_\Rev(A\cdot B)$ and the last rule is $(\to\times)$. Then it is of the form
	$$
	\infer[(\to\times)]{G\to T}{(\Gamma^\prime)^\bullet\to tr_\Rev(A) & (\Delta^\prime)^\bullet\to tr_\Rev(B)}
	$$
	where $\Gamma^\prime,\Delta^\prime=tr_\Rev(A_1)^{\delta_1}\dots tr_\Rev(A_n)^{\delta_n}$ (note that $\Gamma^\prime$ and $\Delta^\prime$ are not regular strings but strings with symbols indexed by $0$ or $1$). By the induction hypothesis, there exist derivable sequents $Seq_1=\Gamma\to A$ and $Seq_2=\Delta\to B$ such that $(\Gamma^\prime)^\bullet\to tr_\Rev(A)\in tr^\prime_\Rev(Seq_1)$ and $(\Delta^\prime)^\bullet\to tr_\Rev(B)\in tr^\prime_\Rev(Seq_2)$. Then $G\to T$ belongs to $tr^\prime_\Rev(\Gamma,\Delta\to A\cdot B)$ and the sequent $\Gamma,\Delta\to A\cdot B$ is derivable in $\mathrm{L}^\Rev$ (using the rule $(\to\cdot)$).
	
	\textbf{Case 2.} $T=tr_\Rev(A/B)$ ($T=tr_\Rev(B\backslash A)$) and the last rule in the derivation is $(\to\div)$. The proof for such cases is similar to that in Case 1: we show how to remodel this production using the rule $(\to/)$ ($(\to\backslash)$ resp.) of the Lambek calculus.
	
	\textbf{Case 3.} $T=tr_\Rev(A^\Rev)$ and the last rule is $(\to\times)$. Then this rule application is of the form
	$$
	\infer[(\to\times)]{(tr_\Rev(A_1)^{\delta_1}\dots tr_\Rev(A_n)^{\delta_n})^\bullet\to tr_\Rev(A^\Rev)}
	{(tr_\Rev(A_n)^{\overline{\delta_n}}\dots tr_\Rev(A_1)^{\overline{\delta_1}})^\bullet\to tr_\Rev(A)}
	$$
	where $\overline{\delta}\eqdef (\delta+1 \mod 2)$. Let us consider a simple but comprehensive example when $n=2$, $\delta_1=0$ and $\delta_2=1$:
	$$
	\infer[(\to\times)]{(tr_\Rev(A_1)^0, tr_\Rev(A_2)^1)^\bullet\to tr_\Rev(A^\Rev)}
	{(tr_\Rev(A_2)^0, tr_\Rev(A_1)^1)^\bullet\to tr_\Rev(A)}
	$$
	One observes that the only sequent $Seq$ such that $(tr_\Rev(A_2)^0, tr_\Rev(A_1)^1)^\bullet\to tr_\Rev(A)$ belongs to $tr_\Rev^\prime(Seq)$ is the sequent $A_2,A_1^\Rev\to A$. By the induction hypothesis, $\mathrm{L}^\Rev\vdash A_2,A_1^\Rev\to A$. Then we derive in $\mathrm{L}^\Rev$ the following:
	$$
	\infer[(^{\Rev\Rev}\to)]{A_1,A_2^\Rev\to A^\Rev}
		 {\infer[(\,^\Rev\to\,^\Rev)]{A_1^{\Rev\Rev},A_2^\Rev\to A^\Rev}{
			A_2,A_1^\Rev\to A
		}
	}
	$$
	Finally, notice that $G\to T$ lies in $tr^\prime_\Rev(A_1,A_2^\Rev\to A^\Rev)$. A similar derivation can be done in a general case: the only difference is that one applies $(^{\Rev\Rev}\to)$ several times.
	
	\textbf{Case 4a.} Let for some $k\in\{1,\dots, n\}$ $\delta_k=0$, $A_k=tr_\Rev(A/B)$ and $$\Pi^\prime=\\tr_\Rev(A_{k+1})^{\delta_{k+1}},\dots, tr_\Rev(A_{l})^{\delta_{l}}$$ for some $k< l\le n$. Let the last rule be $(\div\to)$, in which $tr_\Rev(A_k)$ and $(\Pi^\prime)^\bullet$ participate. Then the rule is of the form
	$$
	\infer[(\div\to)]{(\Gamma^\prime,tr_\Rev(A/B)^0,\Pi^\prime,\Delta^\prime)^\bullet\to T}{(\Gamma^\prime,tr_\Rev(A)^0,\Delta^\prime)^\bullet\to T & (\Pi^\prime)^\bullet\to tr_\Rev(B)}
	$$
	Here we denote by $\Gamma^\prime$ the sequence of indexed types $tr_\Rev(A_{1})^{\delta_{1}},\dots, tr_\Rev(A_{k-1})^{\delta_{k-1}}$ and by $\Delta^\prime$ the sequence $tr_\Rev(A_{l+1})^{\delta_{l+1}},\dots, tr_\Rev(A_{n})^{\delta_{n}}$. By the induction hypothesis, $(\Gamma^\prime,tr_\Rev(A)^0,\Delta^\prime)^\bullet\to T$ belongs to $tr_\Rev^\prime(\Gamma,A,\Delta\to C)$ and $(\Pi^\prime)^\bullet\to tr_\Rev(B)$ belongs to $tr_\Rev^\prime(\Pi\to B)$ for derivable sequents $\Gamma,A,\Delta\to C$ and $\Pi\to B$. Then we apply the rule $(/\to)$ to them and obtain $Seq=\Gamma,A/B,\Pi,\Delta\to C$; finally, notice that $(\Gamma^\prime,tr_\Rev(A/B)^0,\Pi^\prime,\Delta^\prime)^\bullet\to T\in tr^\prime_\Rev(Seq)$.
	
	\textbf{Case 4b.} Let for some $k\in\{1,\dots, n\}$ $\delta_k=1$, $A_k=tr_\Rev(A/B)$, and $$\Pi^\prime=tr_\Rev(A_{l})^{\delta_{l}},\dots, tr_\Rev(A_{k-1})^{\delta_{k-1}}$$ for some $l<k\le n$; let the last rule be $(\div\to)$, in which $tr_\Rev(A_k)$ and $(\Pi^\prime)^\bullet$ participate. Then the rule is of the form
	$$
	\infer[(\div\to)]{(\Gamma^\prime,tr_\Rev(A_l)^{\delta_l},\dots, tr_\Rev(A_{k-1})^{\delta_{k-1}}, tr_\Rev(A/B)^1,\Delta^\prime)^\bullet\to T}{(\Gamma^\prime,tr_\Rev(A)^1,\Delta^\prime)^\bullet\to T & (tr_\Rev(A_{k-1})^{\overline{\delta_{k-1}}}\dots tr_\Rev(A_l)^{\overline{\delta_l}})^\bullet\to tr_\Rev(B)}
	$$
	By the induction hypothesis, $(\Gamma^\prime,tr_\Rev(A)^1,\Delta^\prime)^\bullet\to T$ belongs to $tr^\prime_\Rev(\Gamma, A^\Rev,\Delta\to C)$ and $(tr_\Rev(A_{k-1})^{\overline{\delta_{k-1}}}\dots tr_\Rev(A_l)^{\overline{\delta_l}})^\bullet\to tr_\Rev(B)$ belongs to $tr_\Rev^\prime(\Pi\to B)$ for derivable sequents $\Gamma,A^\Rev,\Delta\to C$ and $\Pi\to B$ (where $\Pi=[A_{k-1}]^{\overline{\delta_{k-1}}},\dots,[A_{l}]^{\overline{\delta_{l}}}$). From $\Pi\to B$ we can derive the following sequent:
	$$
	\infer[(^\Rev\to^\Rev)]{
		\infer[]{
			[A_{l}]^{\delta_{l}},\dots,[A_{k-1}]^{\delta_{k-1}}\to B^\Rev
		}
		{
			([A_{l}]^{\overline{\delta_{l}}})^\Rev,\dots,([A_{k-1}]^{\overline{\delta_{k-1}}})^\Rev\to B^\Rev
		}
	}
	{
		[A_{k-1}]^{\overline{\delta_{k-1}}},\dots,[A_{l}]^{\overline{\delta_{l}}}\to B
	}
	$$
	The last step of a derivation is a series of applications of the rule $(^{\Rev\Rev}\to)$.
	
	Now we compose this sequent with $\Gamma,A^\Rev, \Delta\to C$ as follows:
	$$
	\infer[(\backslash\to)]{
		\Gamma,[A_{l}]^{\delta_{l}},\dots,[A_{k-1}]^{\delta_{k-1}}, B^\Rev\backslash A^\Rev,\Delta\to C
	}{
		\Gamma,A^\Rev,\Delta\to C & [A_{l}]^{\delta_{l}},\dots,[A_{k-1}]^{\delta_{k-1}}\to B^\Rev
	}
	$$
	Finally, we use the fact that $B^\Rev\backslash A^\Rev\leftrightarrow (A/B)^\Rev$. Thus $$Seq= \Gamma,[A_{l}]^{\delta_{l}},\dots,[A_{k-1}]^{\delta_{k-1}}, (A/B)^\Rev,\Delta\to C$$ is derivable in $\mathrm{L}^\Rev$ and $(\Gamma^\prime,tr_\Rev(A_l)^{\delta_l},\dots, tr_\Rev(A_{k-1})^{\delta_{k-1}}, tr_\Rev(A/B)^1,\Delta^\prime)^\bullet\to T\in tr^\prime_\Rev(Seq)$, which we wanted to prove.
	
	\textbf{Cases 5a and 5b} are similar to cases 4a and 4b with the only difference that we consider $A_k=tr_\Rev(B\backslash A)$.
	
	\textbf{Case 6a.} Let for some $k\in\{1,\dots,n\}$ $\delta_k=0$, $A_k=tr_\Rev(A\cdot B)$ and let the last rule be applied to this type. Then this rule is of the form
	$$
	\infer[(\times\to)]{
		(\Gamma^\prime, tr_\Rev(A\cdot B)^0, \Delta^\prime)^\bullet\to T
	}
	{
		(\Gamma^\prime, tr_\Rev(A)^0,tr_\Rev(B)^0, \Delta^\prime)^\bullet\to T
	}
	$$
	As always, we apply the induction hypothesis and then apply the rule $(\cdot\to)$.
	
	\textbf{Case 6b} differs from case 6a in that $\delta_k=1$:
	$$
	\infer[(\times\to)]{
		(\Gamma^\prime, tr_\Rev(A\cdot B)^1, \Delta^\prime)^\bullet\to T
	}
	{
		(\Gamma^\prime, tr_\Rev(B)^1,tr_\Rev(A)^1, \Delta^\prime)^\bullet\to T
	}
	$$
	By the induction hypothesis, $(\Gamma^\prime, tr_\Rev(B)^1,tr_\Rev(A)^1, \Delta^\prime)^\bullet\to T$ belongs to $tr^\prime_\Rev(\Gamma,B^\Rev,A^\Rev,\Delta\to C)$ and $\Gamma,B^\Rev,A^\Rev,\Delta\to C$ is derivable. Then we can derive the following:
	$$
	\infer[(\cdot\to)]{\Gamma,B^\Rev\cdot A^\Rev,\Delta\to C}
	{\Gamma,B^\Rev,A^\Rev,\Delta\to C}
	$$
	Finally, we use that $B^\Rev\cdot A^\Rev\leftrightarrow (A\cdot B)^\Rev$.
	
	\textbf{Case 7a.} Let for some $k\in\{1,\dots,n\}$ $\delta_k=0$, $A_k=tr_\Rev(A^\Rev)$ and let the last rule be applied to this type. Then this rule is of the form
	$$
	\infer[(\times\to)]{
		(\Gamma^\prime, tr_\Rev(A^\Rev)^0, \Delta^\prime)^\bullet\to T
	}
	{
		(\Gamma^\prime, tr_\Rev(A)^1, \Delta^\prime)^\bullet\to T
	}
	$$
	By the induction hypothesis, $(\Gamma^\prime, tr_\Rev(A)^1, \Delta^\prime)^\bullet\to T$ belongs to a translation of the form $tr_\Rev^\prime(\Gamma,A^\Rev,\Delta\to C)$. Note, however, that $(\Gamma^\prime, tr_\Rev(A^\Rev)^0, \Delta^\prime)^\bullet\to T$ also belongs to $tr_\Rev^\prime(\Gamma,A^\Rev,\Delta\to C)$ due to the definition of $tr^\prime_\Rev$, so the sequent $\Gamma,A^\Rev,\Delta\to C$ is the required one.
	
	\textbf{Case 7b}, again, differs from Case 7a in the following: $\delta_k=1$. The last rule then is of the form
	$$
	\infer[(\times\to)]{
		(\Gamma^\prime, tr_\Rev(A^\Rev)^1, \Delta^\prime)^\bullet\to T
	}
	{
		(\Gamma^\prime, tr_\Rev(A)^0, \Delta^\prime)^\bullet\to T
	}
	$$
	We may conclude that $(\Gamma^\prime, tr_\Rev(A)^0, \Delta^\prime)^\bullet\to T$ lies in $tr^\prime_\Rev(\Gamma,A,\Delta\to C)$ for $\Gamma,A,\Delta\to C$ being derivable in $\mathrm{L}^\Rev$. Using this, we construct the following derivation:
	$$
	\infer[(cut)]{
		\Gamma,A^{\Rev\Rev},\Delta\to C
	}
	{
		A^{\Rev\Rev}\to A & \Gamma,A,\Delta\to C
	}
	$$
	The sequent $A^{\Rev\Rev}\to A$ is derivable (using $(\to^{\Rev\Rev})$). Finally, it suffices to notice that $(\Gamma^\prime, tr_\Rev(A^\Rev)^1, \Delta^\prime)^\bullet\to T$ lies in $tr^\prime_\Rev(\Gamma,A^{\Rev\Rev},\Delta\to C)$.
\end{proof}

\subsection{Theorem \ref{cut}}\label{cut_proof}
\begin{definition}
	Size $|T|$ of a type $T$ is defined inductively as follows:
	\begin{enumerate}
		\item $|p|\eqdef 1$ for $p\in Pr$;
		\item If $T=N\div D$ and $E_D=\{d_0,\dots,d_k\}$ with $lab_D(d_0)$ being equal to \$, then $|T|\eqdef |N|+|lab_D(d_1)|+\dots+|lab_D(d_k)|+1$;
		\item If $T=\times(M)$ and $E_M=\{m_1,\dots,m_k\}$, then $|T|\eqdef |lab_M(m_1)|+\dots+|lab_M(m_k)|+1$.
	\end{enumerate}
\end{definition}
\begin{proof}[Proof (of Theorem \ref{cut}).]
	We prove that if $\mathrm{HL}\vdash H\to A$ and $\mathrm{HL}\vdash G\to B$, then $\mathrm{HL}\vdash G[e_0/H]\to B$ where $e_0\in E_G$ and $lab(e_0)=A$ by induction on $|H\to A|+|G\to B|$.
	
	\textbf{Case 1.} $H\to A$ is an axiom $p^\bullet\to p$. Then $G[e_0/H]=G$, so the replacement changes nothing. 
	
	\textbf{Case 2.} $G\to B$ is an axiom $p^\bullet\to p$. Then $A=lab(e_0)=B$, and $G[e_0/H]\to B = H\to A$, so the conclusion coincides with one of the premises.
	
	Let us further call the distinguished type $N\div D$ in rules $(\div\to)$ and $(\to\div)$, and the distinguished type $\times(M)$ in rules $(\times\to)$ and $(\to\times)$ (we mean those from definitions in Section \ref{sec_axioms_rules}) the \emph{major type of the rule}.
	
	\textbf{Case 3.} In $H\to A$, the type $A$ is not the major type of the last rule applied. There are two subcases depending on the type of this rule. 
	
	\textbf{Case 3a.} $(\div\to)$:
	$$
	\infer[(\mathrm{cut})]{G[e_0/H]\to B}{
		\infer[(\div\to)]{H\to A}{
			K\to A & H_1\to T_1 & \dots & H_k\to T_k
		}
		&
		G\to B
	}
	$$
	Here $H$ is obtained from $K$ by replacements using $H_1,\dots,H_k$, as the rule $(\div\to)$ prescribes. Note that we omit some details of rule applications that are not essential here (but their role can be understood from the general structure of the rule).
	\\
	This derivation is transformed as follows:
	$$
	\infer[(\div\to)]{G[e_0/H]\to B}{
		\infer[(\mathrm{cut})]{G[e_0/K]\to B}{
			K\to A & G\to B
		}
		&
		H_1\to T_1 & \dots & H_k\to T_k
	}
	$$
	Now we apply the induction hypothesis to the premises and obtain a cut-free derivation for $G[e_0/H]\to B$. Further the induction hypothesis will be applied in a similar way to the premises appearing in the new derivation process. Sometimes the induction hypothesis will be applied several times (from top to bottom, see Cases 5 and 6); however, this will be always legal.
	
	\textbf{Case 3b} $(\times\to)$. Let $f_0\in E_H$ be labeled by a type $\times(K)$, which apears at the last step of a derivation. Then the remodelling is as follows:
	$$
	\infer[(\mathrm{cut})]{G[e_0/H]\to B}{
		\infer[(\times\to)]{H\to A}{
			H[f_0/K]\to A
		}
		&
		G\to B
	}
	\rightsquigarrow
	\infer[(\times\to)]{G[e_0/H]\to B}{
		\infer[(\mathrm{cut})]{G[e_0/H[f_0/K]]\to A}{
			H[f_0/K]\to A & G\to B
		}
	}
	$$
	Here and further symbols like $\rightsquigarrow$ stand for remodelling a derivation.
	
	\textbf{Case 4.} The type $A$ labeling $e_0$ within $G$ is not the major type in the last rule in the derivation of $G\to B$. Then one repeats the last step of the derivation of $G\to B$ in $G[e_0/H]\to B$ considering $H$ to be an atomic structure acting as $e_0$. Formally, there are five subcases depending on the last rule applied in the derivation of $G\to B$:
	\begin{enumerate}[wide, labelwidth=!, labelindent=0pt]
		\item $(\div\to)$ if one of the subgraphs $H_i$ contains $e_0$:
		$$
		\infer[(\mathrm{cut})]{G[e_0/H]\to B}{
			H\to A
			&
			\infer[(\div\to)]{G\to B}{
				K\to B & H_1\to T_1\;\dots & H_i\to T_i &\dots\; H_k\to T_k
			}
		}
		$$
		Let $H_i$ contain an edge $e_0$; then this derivation is remodeled as follows:
		$$
		\infer[(\div\to)]{G[e_0/H]\to B}{
			K\to B & H_1\to T_1\;\dots & \infer[(\mathrm{cut})]{H_i[e_0/H]\to T_i}{H\to A & H_i\to T_i} &\dots\; H_k\to T_k
		}
		$$
		
		\item $(\div\to)$ if $e_0$ is not contained in any $H_i$ (then $e_0$ belongs to $E_K$):
		$$
		\infer[(\mathrm{cut})]{G[e_0/H]\to B}{
			H\to A
			&
			\infer[(\div\to)]{G\to B}{
				K\to B & H_1\to T_1\; & \dots & \; H_k\to T_k
			}
		}
		$$
		\begin{center}
			$\downsquigarrow$
		\end{center}
		$$
		\infer[(\div\to)]{G[e_0/H]\to B}{
			\infer[(\mathrm{cut})]{K[e_0/H]\to B}
			{H\to A & K\to B} &
			H_1\to T_1\; & \dots & \; H_k\to T_k
		}
		$$
		\item $(\times\to)$: 
		$$
		\infer[(\mathrm{cut})]{G[e_0/H]\to B}{
			H\to A
			&
			\infer[(\times\to)]{G\to B}{G[f_0/K]\to B}
		}
		$$
		Here $f_0\in E_G$ is labeled by $\times(K)$ (and $f_0\ne e_0$, because $A$ is not major). The remodelling is as follows:
		$$
		\infer[(\to\times)]{G[e_0/H]\to B}{
			\infer[(\mathrm{cut})]{G[f_0/K][e_0/H]\to B}{
				H\to A
				&
				G[f_0/K]\to B
			}
		}
		$$
		\item $(\to\div)$:
		$$
		\infer[(\mathrm{cut})]{G[e_0/H]\to B}
		{
			H\to A
			&
			\infer[(\to\div)]{G\to N\div D}{D[d_0/G]\to N}
		}
		$$
		Here $d_0\in E_D$ is labeled by \$. Then
		$$
		\infer[(\to\div)]{G[e_0/H]\to N\div D}
		{
			\infer[(\mathrm{cut})]{D[d_0/G[e_0/H]]\to N}{
				H\to A
				&
				D[d_0/G]\to N
			}
		}
		$$
		Here we use the associativity property: $D[d_0/G[e_0/H]]=D[d_0/G][e_0/H]$.
		\item $(\to\times)$: 
		$$
		\infer[(\mathrm{cut})]{G[e_0/H]\to \times(M)}{
			H\to A
			&
			\infer[(\to\times)]{G\to \times(M)}{
				H_1\to T_1\;\dots & H_i\to T_i &\dots\; H_k\to T_k 
			}
		}
		$$
		Here $G$ is composed of copies of $H_1,\dots,H_k$ by means of $M$. Since $e_0\in E_G$, there is such a graph $H_i$ that $e_0\in E_{H_i}$. Then we can remodel this derivation as follows:
		$$
		\infer[(\to\times)]{G[e_0/H]\to \times(M)}{
			H_1\to T_1\;\dots & \infer[(\mathrm{cut})]{H_i[e_0/H]\to T_i}
			{H\to A & H_i\to T_i} &\dots\; H_k\to T_k
		}
		$$
	\end{enumerate}
	
	\textbf{Case 5.} $A=\times(M)$ is major in both $H\to A$ and $G\to B$.
	$$
	\infer[(\mathrm{cut})]{G[e_0/H]\to B}
	{
		\infer[(\to\times)]{H\to\times(M)}{H_1\to T_1 & \dots & H_k\to T_k}
		&
		\infer[(\times\to)]{G\to B}{G[e_0/M]\to B}
	}
	$$
	Let us denote $E_M=\{e_1,\dots,e_k\}$ and $lab_M(e_i)=T_i$. Note that $M$ is a subgraph of $K\eqdef G[e_0/M]$, in particular $E_M\subseteq E_K$. Now we are ready to remodel this derivation as follows:
	$$
	\infer[(\mathrm{cut})]{K[e_1/H_1]\dots[e_k/H_k]\to B}
	{
		H_k\to T_k
		&
		\infer[]{K[e_1/H_1]\dots[e_{k-1}/H_{k-1}]\to B}
		{
			\infer[(\mathrm{cut})]{\dots}
			{
				H_2\to T_2
				&
				\infer[(\mathrm{cut})]{K[e_1/H_1][e_2/H_2]\to B}
				{
					\infer[(\mathrm{cut})]{K[e_1/H_1]\to B}
					{
						H_1\to T_1
						&
						K\to B
					}
				}
			}
		}
	}
	$$
	Finally, note that $K[e_1/H_1]\dots[e_k/H_k]=G[e_0/H]$. The induction hypothesis applied several times from top to bottom of this new derivation completes the proof.
	
	\textbf{Case 6.} $A=N\div D$ is major in both $H\to A$ and $G\to B$.
	$$
	\infer[(\mathrm{cut})]{G[e_0/H]\to B}
	{
		\infer[(\to\div)]{H\to N\div D}{D[d_0/H]\to N}
		&
		\infer[(\div\to)]{G\to B}{L\to B & H_1\to T_1 & \dots & H_k\to T_k}
	}
	$$
	Here $d_0\in E_D$ is labeled by \$. We denote edges in $E_D$ except for $d_0$ as $e_1,\dots,e_k$; let $lab_D(e_i)=T_i$ (from above). Note that $e_1,\dots,e_k$ can be considered as edges of $K\eqdef D[d_0/H]$ as well. Observe that $L$ has to contain an edge labeled by $N$ that participates in $(\div\to)$; denote this edge by $\widetilde{e}_0$. Then the following remodelling is done:
	$$
	\infer[(\mathrm{cut})]{L[\widetilde{e}_0/K][e_1/H_1]\dots[e_k/H_k]\to B}
	{
		H_1\to T_1
		&
		\infer[]{L[\widetilde{e}_0/K][e_1/H_1]\dots[e_{k-1}/H_{k-1}]\to B}
		{
			\infer[(\mathrm{cut})]{\dots}
			{
				H_2\to T_2
				&
				\infer[(\mathrm{cut})]{L[\widetilde{e}_0/K][e_1/H_1][e_2/H_2]\to B}
				{
					\infer[(\mathrm{cut})]{L[\widetilde{e}_0/K][e_1/H_1]\to B}
					{
						H_1\to T_1
						&
						\infer[(\mathrm{cut})]{L[\widetilde{e}_0/K]\to B}
						{
							K\to N
							&
							L\to B
						}
					}
				}
			}
		}
	}
	$$
	As a final note, we observe that $L[\widetilde{e}_0/K][e_1/H_1]\dots[e_k/H_k]=G[e_0/H]$. This completes the proof.
\end{proof}

\subsection{Theorem \ref{NP_der}}
\begin{proof}
	This problem is in NP: if $H\to A$ is derivable, then a certificate of derivability is a derivation tree of $H\to A$. This derivation tree has to include all steps of the derivation starting with axioms, and all isomorphisms between graphs in premises and in a conclusion that justify that a replacement (or a compression) is done correctly. Such a certificate has polynomial size w.r.t. size of $H\to A$ since the sum of sizes of all premises is strictly less than the size of a sequent in a conclusion (isomorphisms make it larger, but since each isomorphism can be represented as a list of correspondences between edges in graphs in premises and in a conclusion, their total size can be estimated by the size of a conclusion as well).
	
	NP-completeness directly follows from Theorem \ref{embed_lambek}: since the Lambek calculus is NP-complete, and it is embedded in $\mathrm{HL}$ (in polynomial time), the latter is NP-complete as well.
\end{proof}

\section{Embedding of $\mathrm{HL}$ in the intuitionistic logic}\label{sec_datalog_appendix}
In Section \ref{ssec_datalog} we only considered some examples of how to embed sequents of $\mathrm{HL}$ in $\FOInt$. Here we consider this embedding in general and prove its correctness.

Types of $\mathrm{HL}$ are converted into formulas as follows:
\begin{enumerate}
	\item A primitive type $p$ such that $type(p)=k$ is considered to be a $k$-ary predicate variable, so it can form expressions of the form $p(x_1,\dots,x_k)$. We denote this as $v(p)[x_1,\dots,x_k]\eqdef p(x_1,\dots,x_k)$.
	\item Let $N\div D$ be a type and let $E_D=\{d_0,\dots,d_l\}$ where $lab_D(d_0)=\$$. Let $type(d_0)=k$. We introduce $k$ variables $x_1,\dots,x_k$ and $m\eqdef |V_D|-k$ variables $y_1,\dots,y_m$. Let us assign each variable to a node of $D$ in such a way: we assign $x_1,\dots,x_k$ to nodes attached to $d_0$ and we assign $y_1,\dots,y_m$ to the rest of the nodes (in some order). Let $f$ be a function that takes a node and returns a variable assigned to it. Let $lab_D(d_i)=T_i$ for $i\ge 1$. Then we translate the type as follows:
	\begin{multline}\label{datalog_div}
			v(N\div D)[x_1,\dots,x_k]\eqdef\\ \forall y_1\dots \forall y_m \big(v(T_1)[f(att_D(d_1))]\;\&\;\dots\;\&\;v(T_l)[f(att_D(d_l))]\to v(N)[f(ext_D)]\big)
	\end{multline}
	\item Let $\times(M)$ be a type and let $E_M=\{m_1,\dots,m_l\}$; let also $lab_M(m_i)=T_i$. For $k\eqdef |ext_M|$ we introduce $k$ new variables $x_1,\dots,x_k$ along with $m\eqdef |V_M|-k$ new variables $y_1,\dots,y_m$. Again, there is a function $g$ that bijectively assigns one of variables $x_1,\dots,x_k$ to external nodes of $M$ and one of variables $y_1,\dots,y_m$ to the rest of the nodes. Then the traslation is the following:
	\begin{multline}\label{datalog_times}
	v(\times(M))[x_1,\dots,x_k]\eqdef\\ \exists y_1\dots \exists y_m\big(v(T_1)[g(att_M(m_1))]\;\&\;\dots\;\&\; v(T_l)[g(att_M(m_l))]\big)
	\end{multline}
	\item A sequent $H\to A$ is translated as follows (where $k=type(H)$): 
	\begin{multline}
	v(H\to A)[x_1,\dots,x_k]\eqdef v(\times(H))[x_1,\dots,x_k]\to v(A)[x_1,\dots,x_k]
	\end{multline}
\end{enumerate}
Free variables in these formulas may be considered as universally quantified. An important question is what happens when $l=0$. In such a case there are 0 conjuncts in a formula, so we can write $\top$ instead.

Such formulas correspond to a particular case of embedded implications in the sense of \cite{Bonner89}. E.g., look at the following query taken from \cite{Bonner89}:
$$
easy(d) \leftarrow \forall s\big(grad(s,d)\leftarrow take(s,his100),take(s,eng100)\big)
$$
This formula defines that a department in a university is \emph{easy} if any student can graduate by taking courses ``history 100'' and ``english 100''. This formula differs from formulas occuring above in translations since there are constants $his100$ and $eng100$; in order to avoid this problem let us say that $h_{100}$ ($e_{100}$) is a unary predicate that takes an item as an input and says whether it is a course of history 100 (english 100 resp.) or not. I.e. the above formula is converted into the one
$$
easy(d) \leftarrow \forall s\big(grad(s,d)\leftarrow \exists c_1\exists c_2[take(s,c_1)\;\&\;take(s,c_2)\;\&\; h_{100}(c_1)\;\&\; e_{100}(c_2)]\big)
$$
Equivalently,
$$
easy(d) \leftarrow \forall s\forall c_1\forall c_2\big(grad(s,d)\leftarrow [take(s,c_1)\;\&\;take(s,c_2)\;\&\; h_{100}(c_1)\;\&\; e_{100}(c_2)]\big)
$$

Translating this formula back to $\mathrm{HL}$ we can say that the property of being easy is defined by the following type:
$$
grad\div
\left(\mbox{{\tikz[baseline=.1ex]{
			\node (V) {};
			\node[node, below=0.7mm of V, label=left:{\scriptsize $(2)$}] (N1) {};
			\node[hyperedge,above=3mm of N1] (E1) {$\$$};
			\node[node,above right=3mm and 12mm of N1, label=left:{\scriptsize $(1)$}] (N2) {};
			\node[node,above right=3mm and 10mm of N2] (N3) {};
			\node[node,below right=3mm and 10mm of N2] (N4) {};
			\node[hyperedge,right=6mm of N3] (E2) {$\;h_{100}\;$};
			\node[hyperedge,right=6mm of N4] (E3) {$\;e_{100}\;$};

			\draw[-,black] (N1) -- node[right] {\scriptsize 1} (E1);
			\draw[->,black] (N2) -- node[above] {$take$} (N3);
			\draw[->,black] (N2) -- node[below] {$take$} (N4);
			\draw[-,black] (N3) -- node[above] {\scriptsize 1} (E2);
			\draw[-,black] (N4) -- node[below] {\scriptsize 1} (E3);
}}}\right)
$$
This is the way we can look at types of $\mathrm{HL}$ as at formulas of the intuitionistic logic or of Datalog with embedded implications.
\begin{proof}[Proof sketch of Theorem \ref{fo-int}.]
	Induction on length of a derivation of $H\to A$.
	
	The axiom $p\to p$ is translated into the true formula $p(\overline{x})\to p(\overline{x})$. Now we consider variants depending on the last rule (notation is taken from definitions in Section \ref{sec_axioms_rules}):
	
	Case $(\div\to)$:
	$$
	\infer[(\div\to)]{H[e/D][d_0\eqdef N\div D][d_1/H_1,\dots,d_k/H_k]\to A}{H\to A & H_1\to lab(d_1) &\dots & H_k\to lab(d_k)}
	$$
	By the induction hypothesis, all the premises are converted into intuitionitically true formulas. Note that $v(H\to A)[\overline{x}]=\exists\overline{y}(\dots\&\;v(N)[z_1,\dots,z_m]\;\&\dots)\to v(A)[\overline{x}]$. After the replacement of $e$ by $D$ and the relabeling we obtain the formula
	$$
	\exists\overline{y}(\dots\&\;v(\times(D[d_0\eqdef N\div D]))[z_1,\dots,z_m]\;\&\dots)\to v(A)[\overline{x}]
	$$
	The main point is that $\vdash v(\times(D[d_0\eqdef N\div D]))[z_1,\dots,z_m]\to v(N)[z_1,\dots,z_m]$; this directly follows from the definition of $v$. Let us show this on example:
	\begin{example}
		Consider the sequent $(p,s\div (p\$ q)^\bullet, q)^\bullet\to s$. It is translated into the formula $\exists y\exists z\big(p(x,y)\;\&\;\forall a\forall b(p(a,y)\;\&\;q(z,b)\to s(a,b))\;\&\;q(z,t)\big)\to s(x,t)$. It is obviously true in all Kripke models. 
	\end{example}
	$v(\times(D[d_0\eqdef N\div D]))[z_1,\dots,z_m]$ is a conjunction of several formulas under the existential quantifier; those formulas are $v(lab(d_i))$ of some variables. However, we know that $\vdash v(\times(H_i))\to v(lab(d_i))$; therefore, we can change each conjunct of the form $v(lab(d_i))$ with the formula $v(\times(H_i))$ preserving truthiness. Finally, we obtain the translation of the conclusion.
	
	Case $(\to\div)$:
	$$
	\infer[(\to\div)]{F\to N\div D}{D[e_0/F]\to N}
	$$
	As in the definition of $v$, let us assign variables to nodes. Let the premise be translated into a formula of the form
	$$
	\exists \overline{y}(v(T_1)[\overline{x_1}]\;\&\;\dots\;\&\; v(T_k)[\overline{x_l}])\to v(N)[\overline{x}]
	$$
	Let us say without loss of generality that the first $m$ conjuncts $v(T_i)[\overline{x_i}]$, $i=1,\dots,m$ correspond to edges of $D[e_0/F]$ from $F$, and the rest of them correspond to other edges. Then the conclusion is transformed into the formula of the form
	$$
	\exists\overline{y^\prime}(v(T_1)[\overline{x_1}]\;\&\;\dots\;\&\; v(T_m)[\overline{x_m}])\to \forall \overline{z}\big(v(T_{m+1})[\overline{x_{m+1}}]\;\&\;\dots\;\&\;v(T_l)[\overline{x_l}]\to v(N)[\overline{x}]\big)
	$$
	Variables in $\overline{y^\prime}$ are included in those in $\overline{y}$: all nonexternal nodes of $F$ are also nonexternal in $D[e_0/F]$. Informally, the first formula says that existence of elements participating in relations $v(T_1),\dots,V(T_l)$ implies $v(N)$; the second formula says that if there are elements participating in a smaller number of relations $v(T_1),\dots,v(T_m)$, then, after their arbitrary supplementing with elements corresponding to variables $\overline{z}$ in such a way that the rest of relations hold, $v(N)$ holds on appropriate elements. Obviously, the latter semantically follows from the former.
	\begin{example}
		The derivable sequent $(pq)^\bullet\to \times((pqs)^\bullet)\div(\$ s)^\bullet$ corresponds to the formula
		$$
		\exists y\big(p(x,y)\;\&\;q(y,z)\big)\to\forall t\big(s(z,t)\to \exists a\exists b[p(x,a)\;\&\;q(a,b)\;\&\;s(b,t)]\big)
		$$
	\end{example}
	
	Case $(\times\to)$: it is just moving existential quantifiers inside conjunctions, when this is allowed. Look at the following example:
	\begin{example}
		A sequent $(p,q,r,s)^\bullet\to T$ is translated into the formula
		$$
		\exists y\exists z\exists u\big(p(x,y)\;\&\; q(y,z)\;\&\; r(z,u)\;\&\;s(u,v)\big)\to v(T)(x,v).
		$$
		In comparison, the sequent 		$(p,\times((q,r)^\bullet),s)^\bullet\to T$ is converted as follows:
		$$
		\exists y\exists u\big(p(x,y)\;\&\; \exists z(q(y,z)\;\&\; r(z,u))\;\&\;s(u,v)\big)\to v(T)(x,v)
		$$
	\end{example}
	
	Case $(\to\times)$. At the formula level this rule is the following:
	$$
	\infer[]{
		\exists \overline{y} \big(v(\times(H_1))[\overline{y_1}]\;\&\;\dots\;\&\;v(\times(H_l))[\overline{y_l}]\big)\to \exists \overline{y}(v(lab(m_1))[\overline{y_1}]\;\&\;\dots\;\&\;v(lab(m_l))[\overline{y_l}])
	}
	{
		v(\times(H_1))[\overline{x_1}]\to v(lab(m_1))[\overline{x_1}] & \dots & v(\times(H_l))[\overline{x_l}]\to v(lab(m_l))[\overline{x_l}]
	}
	$$
	Clearly, the lower formula follows from the upper one.
\end{proof}
The opposite statement does not, however, hold: if $\FOInt\vdash v(H\to A)$, then this does not imply that $\mathrm{HL}\vdash H\to A$. Moreover, in some cases two different formulas (one of which is derivable and the other one is not) take the same interpretation w.r.t. $v$. A series of such ``bad'' examples is presented below:
\begin{center}
	\begin{tabular}{|c|c|}
		\hline
		Sequent & Formula \\
		\hline
		$\mbox{{\tikz[baseline=.1ex]{
		\node (V) {};
		\node[hyperedge,above=-3mm of V] (E1) {$p$};
		\node[node,left=4mm of E1, label=left:{\scriptsize $(1)$}] (N1) {};
		\node[hyperedge,right=9mm of E1] (E2) {$p$};
		\node[node,left=4mm of E2] (N2) {};
		\node[above = 2mm of N1] {};
		\node[below = 2mm of N1] {};
		
		\draw[] (N1) -- (E1);
		\draw[] (E2) -- (N2);
		}}}\to p$
		 & $\exists y(p(x)\;\&\;p(y))\to p(x)$ \\
		\hline
		$p\to tr_{\mathrm{P}}(p\cdot p)$ & 
		$p(x)\to (p(x)\;\&\;p(x))$
		\\
		\hline
		\begin{tabular}{c}
		$\langle\{v_0,v_1\},\emptyset,\emptyset,\emptyset,v_0v_1\rangle\to \times(\langle\{w_0\},\emptyset,\emptyset,\emptyset,w_0\rangle)$ \\
		$\langle\{v_0,v_1\},\emptyset,\emptyset,\emptyset,v_0v_1\rangle\to \times(\langle\{w_0,w_1\},\emptyset,\emptyset,\emptyset,w_0w_1\rangle)$
		\\
		\end{tabular}
		&
		$\top\to \top$
		\\
		\hline
	\end{tabular}
\end{center}
Possible reasons for incompleteness of such an embedding are discussed in \cite{Pshenitsyn20Preprint}.
\end{document}